\DeclareMathOperator*{\argmin}{argmin}
\newtheorem{theorem}{Theorem}[section]
\newtheorem{lemma}[theorem]{Lemma}
\newtheorem{corollary}[theorem]{Corollary}
\newtheorem{proposition}[theorem]{Proposition}
\theoremstyle{definition}
\newtheorem{definition}[theorem]{Definition}
\newtheorem{assumption}[theorem]{Assumption}
\theoremstyle{remark}
\newtheorem{remark}[theorem]{Remark}
\numberwithin{equation}{section}
\begin{document}

\title{Linear convergence of proximal descent schemes on the Wasserstein space}
\author{Razvan-Andrei Lascu}
\address{Center for Advanced Intelligence Project, RIKEN, Tokyo, Japan}
\email{razvan-andrei.lascu@riken.jp}

\author{Mateusz B. Majka}
\address{School of Mathematical and Computer Sciences, Heriot-Watt University, Edinburgh, UK, and Maxwell Institute for Mathematical Sciences, Edinburgh, UK}
\email{m.majka@hw.ac.uk}

\author{David \v{S}i\v{s}ka}
\address{School of Mathematics, University of Edinburgh, UK, and Simtopia, UK}
\email{d.siska@ed.ac.uk}

\author{\L ukasz Szpruch}
\address{School of Mathematics, University of Edinburgh, UK, The Alan Turing Institute, UK and Simtopia, UK}
\email{l.szpruch@ed.ac.uk}

\keywords{Entropy regularization, Proximal JKO-based schemes, Optimal transport, Mean-field optimization, logarithmic Sobolev inequality}
\subjclass[2020]{46N10, 49Q22, 49K30, 58E30}

\dedicatory{}

\begin{abstract}
We investigate proximal descent methods, inspired by the minimizing movement scheme introduced by Jordan, Kinderlehrer and Otto, for optimizing entropy-regularized functionals on the Wasserstein space.
We establish linear convergence under flat convexity assumptions, thereby relaxing the common reliance on geodesic convexity. Our analysis circumvents the need for discrete-time adaptations of the Evolution Variational Inequality (EVI). Instead, we leverage a uniform logarithmic Sobolev inequality (LSI) and the entropy ``sandwich" lemma, extending the analysis from \cite{Nitanda2022ConvexAO, chizat2022meanfield}. The major challenge in the proof via LSI is to show that the relative Fisher information is well-defined at every step of the scheme. Since the relative entropy is not Wasserstein differentiable, we prove that along the scheme the iterates belong to a certain class of Sobolev regularity, and hence the relative entropy has a unique Wasserstein sub-gradient, and that the relative Fisher information is indeed finite.
\end{abstract}

\maketitle
\section{Introduction}
\label{sec:Introduction}
We consider the problem of minimizing an entropy-regularized flat-convex function
\begin{equation}
\label{eq:mean-field-min-problem}
\min_{\mu \in \mathcal{P}_2(\mathbb R^d)} F^{\sigma}(\mu), \text{ with } F^{\sigma}(\mu)\coloneqq F(\mu) + \sigma \operatorname{KL}(\mu|\rho),
\end{equation}
over the Wasserstein space $\left(\mathcal{P}_2(\mathbb R^d), \mathcal{W}_2\right)$, where $F: \mathcal{P}_2(\mathbb R^d) \to \mathbb R$ is a function bounded from below on $\mathcal{P}_2(\mathbb R^d),$ $\rho \in \mathcal{P}_2(\mathbb{R}^d)$ is a reference probability measure, $\sigma > 0$ is a regularization parameter and $\operatorname{KL}$ is the KL-divergence (relative entropy). Such optimization problems are motivated by many applications in data science and machine learning, including the task of training two-layer neural networks (NNs) in the mean-field regime \cite{nitanda2017stochasticparticlegradientdescent, Chizat2018OnTG, Mei2018AMF, Rotskoff2018TrainabilityAA,sirignano} and reinforcement learning \cite{leahy,ruiyi,yamamoto24a}.

In this work, we tackle \eqref{eq:mean-field-min-problem} from the perspective of discrete-time stepping schemes by proposing the following Jordan--Kinderlehrer--Otto (JKO)-based optimization methods: proximal point, prox-linear and proximal gradient,\footnote{We maintain the terminology used for analogous methods in finite-dimensional optimization; see e.g. \cite{Drusvyatskiy2016EfficiencyOM,parikh}. Indeed, our naming convention is justified since the JKO step \eqref{eq:JKO} can be viewed as a proximal operator on the Wasserstein space $(\mathcal{P}_2(\mathbb R^d), \mathcal{W}_2)$.} for which we prove linear convergence to the minimizer of $F^{\sigma},$ without requiring that $F$ is geodesically convex. For $\rho \propto e^{-U}$ with a sufficiently regular potential $U: \mathbb{R}^d \to \mathbb{R}$ and $F = 0$, given a step-size $\tau > 0$ and starting from $\mu^0 \in \mathcal{P}_2(\mathbb R^d)$, the JKO scheme, also called minimizing movement scheme or proximal descent in the Wasserstein space, was originally introduced in \cite{jko} and constructs a sequence $\left(\mu^n\right)_{n \in \mathbb N} \subset \mathcal{P}_2(\mathbb R^d)$ by the update rule
\begin{equation}
\label{eq:JKO}
\mu^{n+1} = \argmin_{\mu \in \mathcal{P}_2(\mathbb R^d)} \left\{\sigma \operatorname{KL}(\mu|\rho) + \frac{1}{2\tau}\mathcal{W}_2^2(\mu, \mu^n)\right\},
\end{equation}
where $\mathcal{W}_2$ is the $L^2$-Wasserstein distance. 
Note that, by replacing $\rho$ in~\eqref{eq:JKO} with $e^{-\sigma^{-1}f-U}$ for some function $f:\mathbb{R}^d \to \mathbb{R}$, and normalizing appropriately, \eqref{eq:JKO} covers optimization problems for functions of the form $F^{\sigma}(\mu) = \int_{\mathbb{R}^d} f \mathrm{d}\mu + \sigma \operatorname{KL}(\mu|e^{-U})$.

Numerical methods for implementing the JKO scheme \eqref{eq:JKO} were proposed in \cite{Benamou,Benamou2014DiscretizationOF,CarrilloCraigPatacchini2019,CancesGallouetTodeschi2020,natale2020tpfafinitevolumeapproximation,Carrillo2022PrimalDual,hraivoronska2025convergencefullydiscretejko}. A survey of \cite{Benamou,Benamou2014DiscretizationOF} is also included in \cite[Section 4.7]{santambrogio}. Moreover, if the initial measure $\mu^0$ and the target measure $\rho$ are both Gaussian, it is shown in \cite[Section 3; Example 5]{Wibisono2018SamplingAO} that the update step in \eqref{eq:JKO} can be computed in closed form. Note that, however, these works do not cover the case of non-linear $F$.

Another important direction in the numerical computation of the JKO scheme employs neural networks. For instance, \cite{mokrov2021largescale} utilized input-convex neural networks (ICNNs) to compute each update of the JKO scheme. Subsequently, \cite{Yao} reformulated the JKO scheme \eqref{eq:JKO} as an optimization problem over the space of transport maps and further showed that accurate approximations of these maps can be computed using standard neural networks. Building on this idea, \cite{cheng} applied the method in the context of flow-based generative models.

\subsection{JKO-based stepping schemes}\label{sec:JKO1}
A natural approach to solving \eqref{eq:mean-field-min-problem} for a general $F$ is to start with the proximal point scheme
\begin{equation}
\label{eq:implicit-JKO}
\mu^{n+1} = \argmin_{\mu \in \mathcal{P}_2(\mathbb R^d)} \left\{F(\mu) + \sigma\operatorname{KL}(\mu|\rho) + \frac{1}{2\tau}\mathcal{W}_2^2(\mu, \mu^n)\right\}.
\end{equation}
However, this scheme requires one to solve, at each step, a convex but nonlinear minimization problem and hence is mostly of theoretical interest. Its advantage on the theoretical level is that it lends itself to a clean convergence proof with fewest regularity assumptions, which is why we include analysis of this scheme.
 
A more practical scheme can be created by linearizing $F$ around $\mu^n$ and leveraging the fact that the Wasserstein penalty term $\mathcal{W}_2^2(\mu, \mu^n)$ ensures that the linearization is accurate enough, provided there is appropriate regularity of $F$.
Thus, we define the prox-linear scheme
\begin{equation}
\label{eq:semi-implicit-JKO}
\mu^{n+1} = \argmin_{\mu \in \mathcal{P}_2(\mathbb R^d)} \left\{\int_{\mathbb R^d} \frac{\delta F}{\delta \mu}(\mu^n, x)(\mu-\mu^n)(\mathrm{d}x) + \sigma \operatorname{KL}(\mu|\rho) + \frac{1}{2\tau}\mathcal{W}_2^2(\mu, \mu^n)\right\}.
\end{equation}
Since the map $\mu \mapsto \int_{\mathbb R^d}\tfrac{\delta F}{\delta \mu}(\mu^n, x)\mu(\mathrm{d}x)$ is linear, by replacing $\rho$ in~\eqref{eq:JKO} with appropriately normalized $e^{-\sigma^{-1}\tfrac{\delta F}{\delta \mu}(\mu^n,\cdot)-U}$, we see that \eqref{eq:JKO} covers \eqref{eq:semi-implicit-JKO} as a special case (cf.\ the remark below \eqref{eq:JKO}).
In other words, one could view \eqref{eq:semi-implicit-JKO} as corresponding to \eqref{eq:JKO} with a relative entropy of the form $\operatorname{KL}(\mu|\Phi_{\sigma}[\mu^n])$, where $\Phi_{\sigma}[\mu^n] \propto e^{-\sigma^{-1}\tfrac{\delta F}{\delta \mu}(\mu^n,\cdot) - U}$. Thus, \eqref{eq:semi-implicit-JKO} can be implemented numerically as discussed in \cite[Section 4.7]{santambrogio}. More recently, \cite{teter2024proximal} proposed an algorithm for solving \eqref{eq:semi-implicit-JKO} in the case where $F(\mu) = \frac{1}{2}\int_{\mathbb R^d}\int_{\mathbb R^d} W(x,x') \mu(\mathrm{d}x)\mu(\mathrm{d}x'),$ for an interaction potential $W:\mathbb R^d \times \mathbb R^d \to \mathbb R$ that satisfies $W(x,x') = W(x',x),$ for all $x,x' \in \mathbb R^d.$ Several numerical experiments were performed but no convergence rates for the algorithm were proved.

Another natural approach is to consider the proximal gradient algorithm, which in our context translates to updating $\mu^n$ by a pushforward, which in fact we will show is an optimal transport map for $F$ regular enough and sufficiently small $\tau,$ and updating the resulting measure via a JKO step. Thus, the proximal gradient scheme is 
\begin{equation}
\label{eq:proximal-JKO}
\begin{split}
&\nu^{n+1} = \left(I_d-\tau\nabla_\mu F(\mu^n)(\cdot)\right)_{\#}\mu^n, \\
&\mu^{n+1} = \argmin_{\mu \in \mathcal{P}_2(\mathbb R^d)} \left\{\sigma \operatorname{KL}(\mu|\rho) + \frac{1}{2\tau}\mathcal{W}_2^2(\mu, \nu^{n+1})\right\},
\end{split}
\end{equation}
where $\nabla_\mu F(\mu^n)$ denotes the Wasserstein gradient of $F$ at $\mu^n$ (cf. Definition \ref{def:wass-differentiability}). A proximal scheme related to~\eqref{eq:proximal-JKO} was recently introduced in \cite{korbaproximal} for the case where $F=0$ in \eqref{eq:mean-field-min-problem}. This method splits $\operatorname{KL}(\mu|\rho)$ into the sum of $\int_{\mathbb{R}^d} U d\mu$ and the entropy $H(\mu)$ (cf. Subsection \ref{subsec:notation}) and then implements a gradient descent step on $U$ and a JKO update step for $H$, see the discussion in Section \ref{sec:RelatedWorks} for more details.

It is worth emphasizing that an ``explicit'' scheme in which both $F$ and $\operatorname{KL}(\cdot|\rho)$ are linearized around $\mu^n$ is not expected to converge due to the non-smoothness of the relative entropy in the Wasserstein space \cite[Subsection 3.1.1]{Wibisono2018SamplingAO}. Recently, \cite{xu2024forwardeulertimediscretizationwassersteingradient} provided two counterexamples for which updating $\mu^n$ by the pushforward
\begin{equation*}
    \mu^{n+1} = \left(I_d - \tau\nabla_\mu \operatorname{KL}(\mu^n|\rho)\right)_{\#}\mu^n,
\end{equation*}
fails to converge for $\mu^0$ and $U$ appropriately chosen.

Utilizing techniques from optimal transport and the theory of gradient flows on the space of probability measures, we prove that the iterates $(\mu^n)_{n \in \mathbb N}$ generated by each of the schemes \eqref{eq:implicit-JKO}, \eqref{eq:semi-implicit-JKO} and \eqref{eq:proximal-JKO} converge linearly to the minimizer of $F^{\sigma}.$ 

\subsection{Connection to the Wasserstein gradient flow}
As $\tau \to 0,$ schemes \eqref{eq:implicit-JKO}, \eqref{eq:semi-implicit-JKO} and \eqref{eq:proximal-JKO} are expected to recover the Wasserstein gradient flow of $F^{\sigma},$ given by
\begin{equation}
\label{eq:wass-grad-flow-F-sigma}
\partial_t \mu = \nabla \cdot \left(\left(\nabla \frac{\delta F}{\delta \mu}(\mu, \cdot) + \sigma \nabla U\right)\mu\right) + \sigma\Delta \mu, \quad \mu|_{t=0} \coloneqq \mu^0 \in \mathcal{P}_2(\mathbb R^d).
\end{equation}
In continuous time, there are two potential approaches to show that \eqref{eq:wass-grad-flow-F-sigma} converges with rate $\mathcal{O}(e^{-\kappa t}),$ for $\kappa > 0,$ to the minimizer $\mu_\sigma^*$ of $F^{\sigma}.$ The appropriate approach depends on $F.$

On the one hand, assume that $F$ is geodesically convex and $U$ is $\beta$-strongly-convex for $\beta > 0.$ Then $F^{\sigma}$ is $\sigma\beta$-geodesically convex, which implies that
\begin{equation*}
    F^{\sigma}(\mu_\sigma^*) - F^{\sigma}(\mu_t) \geq \left\langle \nabla \frac{\delta F^{\sigma}}{\delta \mu}(\mu_t, \cdot), T_{\mu_t}^{\mu_\sigma^*} - I_d\right\rangle_{L_{\mu_t}^2(\mathbb R^d)} + \frac{\sigma \beta}{2}\mathcal{W}_2^2(\mu_t, \mu_\sigma^*),
\end{equation*}
where $T_{\mu_t}^{\mu_\sigma^*}:\mathbb R^d \to \mathbb R^d$ is the optimal transport map from $\mu_t$ to $\mu_\sigma^*,$ provided that it exists. Furthermore, by \cite[Lemma 8.4.7]{ambrosio2008gradient} applied to \eqref{eq:wass-grad-flow-F-sigma}, it holds that
\begin{equation*}
    \frac{1}{2}\frac{\mathrm{d}}{\mathrm{d}t}\mathcal{W}_2^2(\mu_t, \mu_\sigma^*) = \left\langle \nabla \frac{\delta F^{\sigma}}{\delta \mu}(\mu_t, \cdot), T_{\mu_t}^{\mu_\sigma^*} - I_d\right\rangle_{L_{\mu_t}^2(\mathbb R^d)}.
\end{equation*}
Hence one obtains the following Evolution Variational Inequality (EVI, cf.\ \cite[Theorem 11.1.4]{ambrosio2008gradient}) 
\begin{equation*}
\frac{1}{2}\frac{\mathrm{d}}{\mathrm{d}t}\mathcal{W}_2^2(\mu_t, \mu_\sigma^*) \leq -\left(F^{\sigma}(\mu_t) - F^{\sigma}(\mu_\sigma^*)\right) - \frac{\sigma\beta}{2}\mathcal{W}_2^2(\mu_t, \mu_\sigma^*),
\end{equation*}
which implies convergence of \eqref{eq:wass-grad-flow-F-sigma} to $\mu_\sigma^*$ in the Wasserstein distance with rate $\mathcal{O}(e^{-\sigma\beta t})$.

On the other hand, assume that $F$ is flat-convex, which implies that
\begin{equation}
\label{eq:flat-conv-grad-flow}
    F(\mu_\sigma^*) - F(\mu_t) \geq \int_{\mathbb R^d} \frac{\delta F}{\delta \mu}(\mu_t, x)(\mu_\sigma^* - \mu_t)(\mathrm{d}x),
\end{equation} 
and assume that the proximal measure $\Phi_{\sigma}[\mu] \propto e^{-\sigma^{-1}\frac{\delta F}{\delta \mu}(\mu,\cdot) - U}$ satisfies the log-Sobolev inequality (LSI) with a constant $\theta > 0$ for any $\mu \in \mathcal{P}_2(\mathbb R^d).$ By \eqref{eq:flat-conv-grad-flow}, we obtain
\begin{equation}
\label{eq:sandwich-rhs}
    F^{\sigma}(\mu_t) - F^{\sigma}(\mu_\sigma^*) \leq \sigma \operatorname{KL}(\mu_t|\Phi_{\sigma}[\mu_t]),
\end{equation}
which is the right-hand side of the entropy ``sandwich'' lemma (Lemma \ref{lemma:sandwich}). Then via the arguments in \cite{Nitanda2022ConvexAO, chizat2022meanfield} using the LSI and \eqref{eq:sandwich-rhs}, we have
\begin{equation*}
\begin{split}
\frac{\mathrm{d}}{\mathrm{d}t}\left(F^{\sigma}(\mu_t) - F^{\sigma}(\mu_\sigma^*)\right) = -\sigma^2 I(\mu_t|\Phi_{\sigma}[\mu_t]) &\leq -2\theta\sigma^2\operatorname{KL}(\mu_t|\Phi_{\sigma}[\mu_t]) \\
&\leq -2\theta\sigma\left(F^{\sigma}(\mu_t) - F^{\sigma}(\mu_\sigma^*)\right).
\end{split}
\end{equation*}
Hence, convergence of \eqref{eq:wass-grad-flow-F-sigma} to $\mu_\sigma^*$ with rate $\mathcal{O}(e^{-2\theta\sigma t})$ is obtained from Gronwall's lemma. In this setting, the same rate of convergence in the Wasserstein distance then follows immediately from Lemma \ref{lemma:sandwich} and Talagrand's inequality since $\mu_\sigma^* = \Phi_{\sigma}[\mu_\sigma^*].$

We stress that the proof strategy via EVI fails if $F$ is not geodesically convex, and that there are examples of applications where the assumption of geodesic convexity is not satisfied, while flat convexity holds (cf.\ Section \ref{appendix:verify-assumptions}). Motivated by this, in the present paper we adapt the proof via LSI and the ``sandwich'' entropy lemma to discrete-time stepping schemes, and prove linear convergence of \eqref{eq:implicit-JKO}, \eqref{eq:semi-implicit-JKO} and \eqref{eq:proximal-JKO} to $\mu_\sigma^*$. In particular, our proof only requires the notion of flat convexity of $F$ instead of geodesic convexity. 

A related line of research focuses on establishing LSIs for particle approximations of the mean-field Langevin dynamics~\eqref{eq:wass-grad-flow-F-sigma}.
This has received significant attention lately with positive results~\cite{chewi2024uniforminnlogsobolevinequalitymeanfield,wang2024uniformlogsobolevinequalitiesmean,pierre}.

Other examples of optimization problems that are flat convex but not geodesically convex include minimizing distances with respect to a fixed target distribution, a setting common in generative modeling \cite{huang2024generativemodelingminimizingwasserstein2, arbel}. One example is the minimization of the squared $L^2$-Wasserstein distance $F(\mu) = \frac{1}{2}\mathcal{W}_2^2(\mu, \mu^*)$, studied in \cite{huang2024generativemodelingminimizingwasserstein2}, which can be interpreted as training a Generative Adversarial Network (GAN) generator to match a target measure $\mu^*$ in $\mathcal{W}_2$. While $F$ in this case fails to be geodesically convex (see, e.g., \cite[Example 9.1.5]{ambrosio2008gradient}), it is flat convex. Another example, given in \cite{arbel}, is the minimization of the Maximum Mean Discrepancy (MMD) $F(\mu) = \frac{1}{2}\operatorname{MMD}^2(\mu, \mu^*)$. Here, again, $F$ is flat convex but not geodesically convex \cite[Section 3.1]{arbel}. However, neither of these examples includes entropy regularization.

While the theory of continuous-time dynamics is by now relatively well-understood, in the present paper we focus on discrete-time algorithms and we address significant challenges that do not arise in the continuous-time setting.

\subsection{Our contribution}
We propose JKO-based methods for solving the mean-field optimization problem \eqref{eq:mean-field-min-problem}. Our contribution can be summarized as follows:
\begin{itemize}
\item In Theorem \ref{thm:well-posedness} we prove existence and uniqueness of the minimizer for each scheme \eqref{eq:implicit-JKO}, \eqref{eq:semi-implicit-JKO} and \eqref{eq:proximal-JKO}, respectively.
\item In Theorem \ref{thm:optim}, we prove that along the iterates generated by these schemes the relative entropy $\operatorname{KL}(\cdot|\rho)$ admits a unique Wasserstein subgradient, and hence we show that the iterates satisfy first-order optimality conditions.
\item In Theorem \ref{thm:monot-decrease}, we prove that the energy function $F^{\sigma}$ monotonically decreases along the iterates $(\mu^n)_{n \geq 0}$ generated by schemes \eqref{eq:implicit-JKO}, \eqref{eq:semi-implicit-JKO} and \eqref{eq:proximal-JKO}.
\item Finally, in Theorem \ref{thm:exp-conv} and Corollary \ref{corollary:conv-schemes-kl-wass} we prove linear convergence of \eqref{eq:implicit-JKO}, \eqref{eq:semi-implicit-JKO} and \eqref{eq:proximal-JKO} to the minimizer $\mu_\sigma^*$ of $F^{\sigma},$ with respect to $F^{\sigma}(\cdot)-F^{\sigma}(\mu_\sigma^*),$ $\operatorname{KL}(\cdot|\mu_\sigma^*)$ and $\mathcal{W}_2^2(\cdot, \mu_\sigma^*).$ In particular, we show that for each of the schemes there exists $\kappa > 1$ such that
\begin{equation*}
0 \leq F^{\sigma}(\mu^n) - F^{\sigma}(\mu_{\sigma}^*) \leq \kappa^{-n}\left(F^{\sigma}(\mu^0) - F^{\sigma}(\mu_{\sigma}^*)\right),
\end{equation*}
for all $n \in \mathbb N.$
\end{itemize}  

\subsection{Related works}\label{sec:RelatedWorks}
As discussed in the first part of the introduction, \cite{korbaproximal} considered the case where $F=0,$ $\sigma = 1$ and $\rho \propto e^{-U}$ in \eqref{eq:mean-field-min-problem}, with $U$ being strongly convex, $\nabla U$ being $L_U$-Lipschitz continuous, for some $L_U > 0,$ and the proximal scheme
\begin{equation}
\begin{split}
\label{eq:proximal-JKO-2}
&\nu^{n+1} = \left(I_d - \tau\nabla U\right)_{\#}\mu^n, \\
&\mu^{n+1} = \argmin_{\mu \in \mathcal{P}_2^{\lambda}(\mathbb R^d)} \left\{H(\mu) + \frac{1}{2\tau}\mathcal{W}_2^2(\mu, \nu^{n+1})\right\},	
\end{split}	
\end{equation}
with step-size $\tau > 0,$ starting from $\mu^0 \in \mathcal{P}_2^{\lambda}(\mathbb R^d)$, where $H$ is the entropy (cf.\ \eqref{eq:defentropy}). Given that $\tau < L_U^{-1},$ it is proved in \cite[Corollary 11]{korbaproximal} that the iterates $(\mu^n)_{n \in \mathbb N}$ generated by \eqref{eq:proximal-JKO-2} converge with linear rate with respect to $\mu \mapsto \mathcal{W}_2^2(\mu,\rho)$ by establishing a discrete-time variant of the EVI (see \cite[Proposition 8]{korbaproximal}). One of the key ingredients in \cite{korbaproximal} for proving the EVI is the geodesic convexity of $\int_{\mathbb R^d} U \mathrm{d}\mu,$ which follows from assuming strong convexity of $U.$ 

As we have already mentioned, for problem \eqref{eq:mean-field-min-problem} the approach via EVI fails since $F$ is not necessarily geodesically convex. Hence, we prove convergence of \eqref{eq:implicit-JKO}, \eqref{eq:semi-implicit-JKO} and \eqref{eq:proximal-JKO} with linear rate via the LSI and the entropy ``sandwich'' lemma. We work under the assumptions that $F$ is flat-convex and $U$ is convex.

The setting of \cite{korbaproximal} was extended in \cite{luu2024nongeodesicallyconvex} by assuming that $U$ is non-convex but expressed as $U \coloneqq U_1 - U_2,$ where $U_1,U_2:\mathbb R^d \to \mathbb R$ are convex, have quadratic growth and $\nabla U_2$ is $L_{U_2}$-Lipschitz continuous, for some $L_{U_2} > 0.$ In this setup, the proximal scheme considered in \cite{luu2024nongeodesicallyconvex} is
\begin{equation}
\begin{split}
\label{eq:proximal-JKO-3}
&\nu^{n+1} = \left(I_d + \tau\nabla U_2\right)_{\#}\mu^n, \\
&\mu^{n+1} = \argmin_{\mu \in \mathcal{P}_2^{\lambda}(\mathbb R^d)} \left\{\int_{\mathbb R^d}U_1(x)\mu(\mathrm{d}x) + H(\mu) + \frac{1}{2\tau}\mathcal{W}_2^2(\mu, \nu^{n+1})\right\},	
\end{split}	
\end{equation}
with step-size $\tau > 0$ and starting from $\mu^0 \in \mathcal{P}_2^{\lambda}(\mathbb R^d)$. Under the assumption that $\rho \propto e^{-(U_1-U_2)}$ satisfies LSI, it is proved in \cite[Theorem 4.5 2.]{luu2024nongeodesicallyconvex} that the iterates $(\mu^n)_{n \in \mathbb N}$ generated by \eqref{eq:proximal-JKO-3} converge with linear rate with respect to $\mu \mapsto \operatorname{KL}(\mu|\rho)$ and $\mu \mapsto \mathcal{W}_2^2(\mu,\rho)$. However, to apply the results from \cite{luu2024nongeodesicallyconvex}, one would need to verify that $\rho \propto e^{-(U_1-U_2)}$ indeed satisfies LSI, which is not immediate, and may require additional conditions on $U_1$ and $U_2$. Although in our setting the potential $\sigma^{-1}\frac{\delta F}{\delta \mu}(\mu,\cdot) + U$ is also non-convex, we provide sufficient conditions under which the proximal measure $\Phi_{\sigma}[\mu] \propto e^{-\sigma^{-1}\frac{\delta F}{\delta \mu}(\mu,\cdot) - U}$ satisfies LSI for any $\mu \in \mathcal{P}_2(\mathbb R^d)$ and we verify these conditions in concrete examples.

Moreover, the existence and uniqueness of the minimizer for the JKO step in \eqref{eq:proximal-JKO-2} and \eqref{eq:proximal-JKO-3} are just postulated as assumptions in both \cite[Assumption B2]{korbaproximal} and \cite[Assumption 1 (iv)]{luu2024nongeodesicallyconvex}, respectively. In contrast, we provide a fully rigorous proof of the existence and uniqueness of the minimizer for each of our schemes \eqref{eq:implicit-JKO}, \eqref{eq:semi-implicit-JKO} and \eqref{eq:proximal-JKO}. 

We also cover the issue of Wasserstein sub-differentiability of the relative entropy $\operatorname{KL}(\cdot|\rho)$ at every step of each scheme \eqref{eq:implicit-JKO}, \eqref{eq:semi-implicit-JKO} and \eqref{eq:proximal-JKO}. Since the relative entropy is not Wasserstein differentiable (but only Wasserstein sub-differentiable), one needs to prove that along each scheme the iterates belong to a certain class of Sobolev regularity (see, e.g., \cite[Theorem 10.4.13]{ambrosio2008gradient}), and hence the relative entropy $\operatorname{KL}(\cdot|\rho)$ has a unique Wasserstein sub-gradient at $\mu$ given by $\nabla \log \frac{\mathrm{d}\mu}{\mathrm{d}\rho}.$ Moreover, within this class we are guaranteed that the relative Fisher information is finite. These two points, namely existence of unique minimizers for scheme steps and Wasserstein differentiability are in fact technically the most challenging steps of all our convergence proofs.

After the first version of this manuscript appeared on arXiv, we became aware of a concurrent work~\cite{zhu2025convergenceanalysiswassersteinproximal} which independently proved linear convergence of the proximal point scheme~\eqref{eq:implicit-JKO}, albeit only for a very specific choice of $F$ (see \cite[Corollary 3.5]{zhu2025convergenceanalysiswassersteinproximal}). In contrast, our proofs for the proximal point scheme~\eqref{eq:implicit-JKO} work in a more general setting, and, most importantly, we also cover the more practical prox-linear~\eqref{eq:semi-implicit-JKO} and proximal gradient~\eqref{eq:proximal-JKO} schemes.

\section{Preliminaries and assumptions}
\label{sec:prelims}
In this section, we introduce the necessary notations and assumptions used throughout the paper.
\subsection{Notation}
\label{subsec:notation}
By $\mathcal{P}_2(\mathbb R^d)$ we denote the space of probability measures with finite second moment. We equip $\mathcal{P}_2(\mathbb R^d)$ with the 2-Wasserstein distance $\mathcal{W}_2,$ and as common in the literature we refer to the metric space $\left(\mathcal{P}_2(\mathbb R^d), \mathcal{W}_2\right)$ as the Wasserstein space. Let $\mathcal{B}(\mathbb R^d)$ denote the Borel $\sigma$-algebra over $\mathbb R^d.$  For any measures $\mu, \nu$ on $\left(\mathbb R^d, \mathcal{B}(\mathbb R^d)\right),$ we write $\mu \ll \nu$ if $\mu$ is absolutely continuous with respect to $\nu$ and $\mu \sim \nu$ if $\mu$ and $\nu$ are equivalent. The set of absolutely continuous measures in $\mathcal{P}_2(\mathbb R^d)$ with respect to $\nu$ is denoted by $\mathcal{P}_2^{\nu}(\mathbb R^d)\coloneqq\left\{\mu \in \mathcal{P}_2(\mathbb R^d): \mu \ll \nu\right\}.$ We denote by $\lambda$ the Lebesgue measure on $\mathbb R^d.$ For any measures $\mu, \nu \in \mathcal{P}_2(\mathbb R^d),$ the map $T_{\mu}^{\nu}:\mathbb R^d \to \mathbb R^d$ denotes the optimal transport map from $\mu$ to $\nu.$ Let $p \in \{1,2\}.$ For any $\mu \in \mathcal{P}_2(\mathbb R^d)$, let $\left(L_{\mu}^p(\mathbb R^d), \|\cdot\|_{L_{\mu}^p(\mathbb R^d)}\right)$ be the space of $\mathcal{B}(\mathbb R^d)$-measurable functions $f :\mathbb R^d \to \mathbb R^d$ such that $\|f\|_{L_{\mu}^p(\mathbb R^d)} \coloneqq \left(\int_{\mathbb R^d} |f(x)|^p \mu(\mathrm{d}x)\right)^{\frac{1}{p}} < \infty.$ Note that the identity map $I_d:\mathbb R^d \to \mathbb R^d,$ given by $I_d(x)= x,$ for all $x \in \mathbb R^d,$ is an element of $L_{\mu}^2(\mathbb R^d).$ For any $\mu \in \mathcal{P}_2(\mathbb R^d),$ we denote by $\langle \cdot,\cdot \rangle_{L_{\mu}^2(\mathbb R^d)}$ the inner product on the space $L_{\mu}^2(\mathbb R^d).$ Let $W_{\mu}^{1,p}(\mathbb R^d)$ be the weighted Sobolev space of $\mathcal{B}(\mathbb R^d)$-measurable functions $f :\mathbb R^d \to \mathbb R^d$ such that $f \in L_{\mu}^p(\mathbb R^d)$ and $\nabla f \in L_{\mu}^p(\mathbb R^d).$ Let $W_{\lambda,\text{loc}}^{1,1}(\mathbb R^d)$ be the Sobolev space of $\mathcal{B}(\mathbb R^d)$-measurable functions $f :\mathbb R^d \to \mathbb R^d$ such that $f \in L_{\lambda,\text{loc}}^1(\mathbb R^d)$ and $\nabla f \in L_{\lambda,\text{loc}}^1(\mathbb R^d).$ For any $f,g : \mathbb R^d \to \mathbb R^d$, the composition $f\circ g: \mathbb R^d \to \mathbb R^d$ is denoted by $f(g).$

For the Lebesgue measure $\lambda$ on $\mathbb R^d,$ the entropy $H:\mathcal{P}_2(\mathbb R^d) \to [0, \infty]$ is given for any $\mu \in \mathcal{P}_2(\mathbb R^d)$ by 
\begin{equation}\label{eq:defentropy}
H(\mu) \coloneqq \begin{cases}
\int_{\mathbb R^d} \log\frac{\mathrm{d}\mu}{\mathrm{d}\lambda}(x) \mu(\mathrm{d}x), & \mu \in \mathcal{P}_2^{\lambda}(\mathbb R^d),\\
+\infty, & \text{ else}.
\end{cases}
\end{equation}

For $\rho \in \mathcal{P}_2(\mathbb R^d),$ the relative entropy $\operatorname{KL}(\cdot | \rho):\mathcal{P}_2(\mathbb R^d) \to [0, \infty]$ with respect to $\rho$ is given for any $\mu \in \mathcal{P}_2(\mathbb R^d)$ by
\begin{equation*}
\operatorname{KL}(\mu|\rho) \coloneqq \begin{cases}
\int_{\mathbb R^d} \log\frac{\mathrm{d}\mu}{\mathrm{d}\rho}(x) \mu(\mathrm{d}x), & \mu \in \mathcal{P}_2^{\rho}(\mathbb R^d),\\
+\infty, & \text{ else},
\end{cases}
\end{equation*}
and the relative Fisher information $I(\cdot|\rho):\mathcal{P}_2(\mathbb R^d) \to [0, \infty]$ with respect to $\rho$ is given for any $\mu \in \mathcal{P}_2(\mathbb R^d)$ by
\begin{equation*}
I(\mu|\rho) \coloneqq \begin{cases}
\int_{\mathbb R^d} \left|\nabla \log\frac{\mathrm{d}\mu}{\mathrm{d}\rho}(x)\right|^2 \mu(\mathrm{d}x), & \mu \in \mathcal{P}_2^{\rho}(\mathbb R^d) \text{ and }\sqrt{\frac{\mathrm{d}\mu}{\mathrm{d}\rho}} \in W_{\rho}^{1,2}(\mathbb R^d),\\
+\infty, & \text{ else}.
\end{cases}
\end{equation*}
\begin{assumption}[Flat-convexity of $F$]
\label{ass:convexity}
Assume $F \in \mathcal{C}^1$ (cf. Definition \ref{def:flat-derivative}) is convex on $\mathcal{P}_2(\mathbb R^d)$, i.e., 
for any $\mu$, $\mu' \in \mathcal{P}_2(\mathbb R^d)$ and any $\varepsilon \in [0,1]$,
\begin{equation}\label{eq:convexity_classical}
    F((1-\varepsilon)\mu + \varepsilon \mu') \leq (1-\varepsilon)F(\mu) + \varepsilon F(\mu').
\end{equation}
\end{assumption}
Note that Assumption \ref{ass:convexity} is standard in the mean-field optimization literature (see e.g. \cite{10.1214/20-AIHP1140,Nitanda2022ConvexAO,chizat2022meanfield}), and that $F \in \mathcal{C}^1$ and \eqref{eq:convexity_classical} together imply that
for any $\mu$, $\mu' \in \mathcal{P}_2(\mathbb R^d)$, 
\begin{equation}\label{eq:convexity_via_flat_derivative}
F(\mu') - F(\mu) \geq \int_{\mathbb R^d} \frac{\delta F}{\delta \mu}(\mu, x)(\mu'-\mu)(\mathrm{d}x),
\end{equation}
cf.\ \cite[Lemma 4.1]{10.1214/20-AIHP1140}. In some of our results, we will work with functions $F$ that are convex but not necessarily flat differentiable, in which case we will refer directly to condition \eqref{eq:convexity_classical}.

\begin{assumption}[Log-concavity of $\rho$]
\label{ass:abs-cty-pi}
Suppose $\rho(\mathrm{d}x) \propto e^{-U(x)}\mathrm{d}x \in \mathcal{P}_2(\mathbb R^d)$, with a convex potential $U:\mathbb{R}^d \to \mathbb{R}$.
\end{assumption}
The following standard assumptions are concerned with the Wasserstein regularity of $F,$ in particular the existence of its Wasserstein gradient understood as the Euclidean gradient of the flat derivative, and the Lipschitz continuity of the Wasserstein gradient.
\begin{assumption}[Wasserstein differentiability of $F$]
\label{assumption:L-derivative}
For any $\mu \in \mathcal{P}_2(\mathbb R^d),$ assume the function $\mathbb R^d \ni x \mapsto \frac{\delta F}{\delta \mu}(\mu, x) \in \mathbb R$ is differentiable, the function $\mathbb R^d \ni x \mapsto \nabla\frac{\delta F}{\delta \mu}(\mu, x) \in \mathbb R^d$ has at most linear growth uniformly in $\mu$ and the function $\mathcal{P}_2(\mathbb R^d) \times \mathbb R^d \ni (\mu,x) \mapsto \nabla \frac{\delta F}{\delta \mu}(\mu, x) \in \mathbb R^d$ is jointly continuous in $(\mu,x).$
\end{assumption}
Under Assumption \ref{assumption:L-derivative}, it follows from \cite[Proposition 5.48; Theorem 5.64]{Carmona2018ProbabilisticTO} that $F:\mathcal{P}_2(\mathbb R^d) \to \mathbb R$ is Wasserstein differentiable at $\mu \in \mathcal{P}_2(\mathbb R^d)$ (cf. Definition \ref{def:wass-differentiability}), with derivative $\nabla_\mu F(\mu)(\cdot) \coloneqq \nabla\frac{\delta F}{\delta \mu}(\mu,\cdot).$

\begin{assumption}[Lipschitzness of the Wasserstein gradient of $F$]
\label{ass:lip-intrinsic}
Assume there exists $L_F' > 0$ such that for all $\mu, \mu' \in \mathcal{P}_2(\mathbb R^d)$ and all $x,x' \in \mathbb R^d,$ we have
\begin{equation*}
\left| \nabla_\mu F(\mu')(x') - \nabla_\mu F(\mu)(x)\right| \leq L_F'\left(|x'-x| + \mathcal{W}_2(\mu', \mu)\right).
\end{equation*}
\end{assumption}
Next, we recall the definition of the so-called proximal Gibbs measure, which is a crucial ingredient in proving convergence via LSI.
\begin{definition}[Proximal Gibbs measure; \cite{Nitanda2022ConvexAO,  chizat2022meanfield}]
\label{def:proximal-gibbs}
Let $\rho \in \mathcal{P}_2(\mathbb R^d).$ For any $\mu \in \mathcal{P}_2(\mathbb R^d)$ and any $\sigma > 0,$ define the operator $\Phi_{\sigma}:\mathcal{P}_2(\mathbb R^d) \to \mathcal{P}_2^{\rho}(\mathbb R^d)$ by
\begin{equation}\label{eq:proximal-gibbs}
\Phi_{\sigma}[\mu](\mathrm{d}x) \coloneqq \frac{1}{Z(\mu)}e^{-\frac{1}{\sigma} \frac{\delta F}{\delta \mu}(\mu, x)}\rho(\mathrm{d}x),
\end{equation}
where, for each $\mu \in \mathcal{P}_2(\mathbb R^d),$ $Z(\mu)$ is a normalization constant. We call $\Phi_{\sigma}[\mu]$ the proximal Gibbs measure.
\end{definition}
We now impose the following assumption, requiring that the proximal measure $\Phi_{\sigma}$ satisfies a uniform LSI.
\begin{assumption}[Uniform log-Sobolev inequality for ${\Phi_\sigma[\mu]}$]
\label{assumption:unif-lsi}
    Let $\rho \in \mathcal{P}_2(\mathbb R^d)$. Suppose there exists $C_{\sigma, F, \rho} > 0$ such that for all $\mu \in \mathcal{P}_2(\mathbb R^d)$ it holds that $e^{-\sigma^{-1}\frac{\delta F}{\delta \mu}(\mu,\cdot)}\rho \in L^1(\mathbb R^d)$ and $\Phi_{\sigma}[\mu]$ satisfies LSI with constant $C_{\sigma, F, \rho}.$ That is, for all $\mu \in \mathcal{P}_2(\mathbb R^d),$
\begin{equation}
\label{eq:sobolev-ineq}
\operatorname{KL}\left(\mu | \Phi_{\sigma}[\mu]\right) \leq \frac{1}{2C_{\sigma, F, \rho}}I\left(\mu \big| \Phi_{\sigma}[\mu]\right).
\end{equation}
\end{assumption}
\begin{remark}[Sufficient conditions for Assumption \ref{assumption:unif-lsi}]
\label{remark:sufficient-criteria-lsi}
Assumption \ref{assumption:unif-lsi} can be checked on a case-by-case basis. In Section \ref{appendix:verify-assumptions}, we verify it in a concrete example. We summarize below a few conditions ensuring that $\Phi_{\sigma}[\mu]$ fulfills Assumption \ref{assumption:unif-lsi}:
\begin{itemize}
    \item Suppose $\rho$ satisfies LSI (see Remark \ref{remark:LSI_for_rho} for an overview of criteria implying LSI for a measure $\rho \propto e^{-U}$). Suppose further that $\frac{\delta F}{\delta \mu}(\mu,\cdot) \in L^{\infty}(\mathbb R^d)$ for all $\mu \in \mathcal{P}_2(\mathbb R^d)$. Then $\Phi_{\sigma}[\mu]$ satisfies LSI by the Holley--Stroock criterion \cite{Holley1987LogarithmicSI}.
    \item If $\rho$ satisfies LSI and $\frac{\delta F}{\delta \mu} (\mu, \cdot)$ is Lipschitz continuous for all $\mu \in \mathcal{P}_2(\mathbb R^d),$ then, by \cite[Theorem 2.7 (2)]{guillin}, $\Phi_{\sigma}[\mu]$ satisfies LSI.
    \item Let Assumption \ref{ass:abs-cty-pi} hold and assume $U \in C^2(\mathbb{R}^d)$ and there exist $c_U', R' > 0$ such that $x \cdot \nabla U(x) \geq c_U'|x|^2$ whenever $|x| > R'$. Suppose also that $F:\mathcal{P}_2(\mathbb R^d) \to \mathbb R$ has a second order Wasserstein derivative and there exist $c_F$, $L_F'$,  such that $x \cdot \nabla_\mu F(\mu)(x) \geq c_F|x|^2$ and $\nabla^2_{\mu}F(\mu)(x) = \nabla^2\frac{\delta F}{\delta \mu}(\mu, x) \succeq -L_F'$ for all $|x| > R'$ and $\mu \in \mathcal{P}_2(\mathbb R^d).$ It follows that $x \cdot \left(\sigma^{-1}\nabla_\mu F(\mu)(x) +\nabla U(x)\right) \geq (\sigma^{-1}c_F+c_U')|x|^2,$ for all $|x| > R'.$ Since $U \in C^2(\mathbb R^d),$ then, by Assumption \ref{ass:abs-cty-pi}, $\nabla^2 U \succeq 0,$ which implies $\sigma^{-1}\nabla^2_{\mu}F(\mu) + \nabla^2 U \succeq -\sigma^{-1}L_F',$ for all $\mu \in \mathcal{P}_2(\mathbb R^d),$ so that, by \cite[Corollary 2.1]{CattiauxGuillinWu2009}, $\Phi_{\sigma}[\mu]$ satisfies LSI.
\end{itemize}
\end{remark}
Note that the constant $C_{\sigma, F, \rho}$ in Assumption \ref{assumption:unif-lsi} has to be uniform in $\mu \in \mathcal{P}_2(\mathbb R^d)$. This can be achieved via any of the criteria in Remark \ref{remark:sufficient-criteria-lsi}, as long as the constant bounding $\frac{\delta F}{\delta \mu}(\mu,\cdot)$, the Lipschitz constant of $\frac{\delta F}{\delta \mu}(\mu,\cdot)$, or the constants involved in bounding $\nabla_\mu F(\mu)(\cdot)$ and $\nabla^2_{\mu}F(\mu)(\cdot)$, respectively, are uniform in $\mu \in \mathcal{P}_2(\mathbb R^d)$. We will verify that the constant is indeed uniform in the example in Section \ref{appendix:verify-assumptions}.

Furthermore, according to \cite{OTTO2000361}, since $\Phi_{\sigma}[\mu]$ satisfies \eqref{eq:sobolev-ineq}, it also satisfies Talagrand's inequality, i.e., for any $\mu \in \mathcal{P}_2(\mathbb R^d),$ it holds
\begin{equation}
\label{eq:Talagrand-ineq}
\mathcal{W}_2^2\left(\mu, \Phi_{\sigma}[\mu]\right) \leq \frac{2}{C_{\sigma, F, \rho}}\operatorname{KL}\left(\mu|\Phi_{\sigma}[\mu]\right).
\end{equation}

It is worth stressing that for proving linear convergence of \eqref{eq:implicit-JKO}, \eqref{eq:semi-implicit-JKO} and \eqref{eq:proximal-JKO}, it suffices to show that for each scheme, there exists $\kappa > 1$ such that
\begin{equation*}
F^{\sigma}(\mu^n) - F^{\sigma}(\mu_{\sigma}^*) \leq \kappa^{-n}\left(F^{\sigma}(\mu^0) - F^{\sigma}(\mu_{\sigma}^*)\right),
\end{equation*}
for all $n \in \mathbb N,$ where $\left(\mu^n\right)_n$ are the iterates generated by \eqref{eq:implicit-JKO}, \eqref{eq:semi-implicit-JKO} and \eqref{eq:proximal-JKO}, respectively. Then convergence with respect to $\mu \mapsto \operatorname{KL}(\mu|\mu_{\sigma}^*)$ and $\mu \mapsto \mathcal{W}_2^2(\mu, \mu_{\sigma}^*)$ will follow immediately from Lemma \ref{lemma:sandwich} and Talagrand's inequality \eqref{eq:Talagrand-ineq}. The dependence of other constants on $\kappa$ will be made explicit in the statement of the convergence result.
\section{Main results}
In this section, we present our main results, which are grouped into four theorems, each addressing all three schemes \eqref{eq:implicit-JKO}, \eqref{eq:semi-implicit-JKO} and \eqref{eq:proximal-JKO}. We begin by outlining the general proof steps applicable to all schemes:
\begin{enumerate}
\item  We prove in Theorem \ref{thm:well-posedness} that given $\mu^n \in \mathcal{P}_2^{\rho}(\mathbb R^d),$ each scheme \eqref{eq:implicit-JKO}, \eqref{eq:semi-implicit-JKO} and \eqref{eq:proximal-JKO} admits a unique minimizer $\mu^{n+1} \in \mathcal{P}_2^{\rho}(\mathbb R^d).$
\item Then, leveraging results that connect the metric slope of the relative entropy and the relative Fisher information, we prove in Theorem \ref{thm:optim} that, 
for each scheme \eqref{eq:implicit-JKO}, \eqref{eq:semi-implicit-JKO} and \eqref{eq:proximal-JKO},
$$\mu^{n+1} \in \mathfrak C \coloneqq \left\{m \in \mathcal{P}^{\rho}_2(\mathbb R^d): \frac{\mathrm{d}m}{\mathrm{d}\rho} \in W_{\lambda,\text{loc}}^{1,1}(\mathbb R^d), \sqrt \frac{\mathrm{d}m}{\mathrm{d}\rho} \in W_{\rho}^{1,2}(\mathbb R^d)\right\}.$$ Hence, by Theorem \ref{thm:subdiff-entropy}, we conclude that the relative entropy $\operatorname{KL}(\cdot|\rho)$ has a unique Wasserstein subgradient at $\mu^{n+1},$ and it is given by $\nabla \log \frac{\mathrm{d}\mu^{n+1}}{\mathrm{d}\rho}.$ As a consequence of the previous result, we prove first-order optimality conditions for each scheme \eqref{eq:implicit-JKO}, \eqref{eq:semi-implicit-JKO} and \eqref{eq:proximal-JKO}. The optimality conditions enable us to connect the Fisher information relative to the proximal measures $\Phi_{\sigma}$ given by \eqref{eq:proximal-gibbs} with the Wasserstein distance $\mathcal{W}_2^2(\mu^{n+1}, \mu^n).$
\item In Theorem \ref{thm:monot-decrease}, we establish that the energy function $F^\sigma$ decreases monotonically along the sequence of iterates $(\mu^n)_{n \geq 0}$ generated by the schemes \eqref{eq:implicit-JKO}, \eqref{eq:semi-implicit-JKO}, and \eqref{eq:proximal-JKO}. In the case of \eqref{eq:implicit-JKO}, this monotonicity is an immediate consequence of the scheme’s definition, whereas for \eqref{eq:semi-implicit-JKO} and \eqref{eq:proximal-JKO} it follows from the optimality conditions established in Theorem \ref{thm:optim} combined with the smoothness of $F$ relative to $\mathcal{W}_2^2(\mu^{n+1}, \mu^n)$ (Lemma \ref{ass:relative-smoothness}), and the convexity along (generalized) geodesics of $\operatorname{KL}(\cdot|\rho)$ (Lemma \ref{lemma:control-geod-KL}).
\item Finally, invoking the LSI (Assumption \ref{assumption:unif-lsi}) together with the entropy sandwich lemma (Lemma \ref{lemma:sandwich}), we show in Theorem \ref{thm:exp-conv} that the schemes converge at a linear rate.
\end{enumerate}

We first state the main existence result.
\begin{theorem}[Existence and uniqueness of minimizers]
\label{thm:well-posedness}
Suppose $\rho \in \mathcal{P}_2(\mathbb R^d)$ and $\mu^n \in \mathcal{P}_2^{\rho}(\mathbb R^d).$ 
\begin{enumerate}
    \item If $F$ is weakly lower semi-continuous and convex in the sense of \eqref{eq:convexity_classical}, then there exists a unique minimizer $\mu^{n+1} \in \mathcal{P}_2^{\rho}(\mathbb R^d)$ of \eqref{eq:implicit-JKO}.
    \item If $F \in \mathcal{C}^1$, then there exists a unique minimizer $\mu^{n+1} \in \mathcal{P}_2^{\rho}(\mathbb R^d)$ of \eqref{eq:semi-implicit-JKO}.    
    \item Suppose $F \in \mathcal{C}^1$ and $\rho \sim \lambda$ (i.e., $\rho \ll \lambda$ and $\lambda \ll \rho$). If Assumptions \ref{assumption:L-derivative} and \ref{ass:lip-intrinsic} hold, and if $\tau < \frac{1}{L_F'},$ then there exists a unique minimizer $\mu^{n+1} \in \mathcal{P}_2^{\rho}(\mathbb R^d)$ of \eqref{eq:proximal-JKO}.
\end{enumerate}
Hence, in each of 1., 2., 3., if $\mu^0 \in \mathcal{P}_2^{\rho}(\mathbb R^d),$ then $\left(\mu^n\right)_{n \in \mathbb N} \subset \mathcal{P}_2^{\rho}(\mathbb R^d)$ along the schemes \eqref{eq:implicit-JKO}, \eqref{eq:semi-implicit-JKO}, \eqref{eq:proximal-JKO}, respectively.
\end{theorem}

 Note that in part 1. we do not require $F$ to have a flat derivative (we need convexity for uniqueness, but we can work with its classical definition \eqref{eq:convexity_classical} without using $\frac{\delta F}{\delta \mu}$). In our convergence result for scheme \eqref{eq:implicit-JKO}, i.e., Theorem \ref{thm:exp-conv} 1., we will need Assumption \ref{ass:convexity} (i.e., both convexity and $F \in \mathcal{C}^1$) due to an application of the entropy sandwich lemma (cf.\ the discussion below the statement of Theorem \ref{thm:exp-conv}) but for the sake of precision we formulate the existence result with weaker assumptions (observe that if Assumption \ref{ass:convexity} holds, then the weak lower semi-continuity of $F$ required in Theorem \ref{thm:well-posedness} 1. is automatically satisfied, cf.\ Remark \ref{remark:weak_lower_semi_continuity}).
 Note also that for the proof of existence in part 1., it would be sufficient to have weak lower semi-continuity of $F$, but without convexity one would not necessarily have uniqueness of the minimizer of \eqref{eq:implicit-JKO}. 

 On the other hand, the definitions of schemes \eqref{eq:semi-implicit-JKO} and \eqref{eq:proximal-JKO} utilize derivatives of $F$ and hence it is natural to assume $F \in \mathcal{C}^1$, however, they do not require the convexity of $F$.
 Note that in scheme \eqref{eq:semi-implicit-JKO} the energy function is automatically convex due to linearization, and in scheme \eqref{eq:proximal-JKO} the function $F$ does not appear in the JKO step. 

Note that parts 1. and 2. do not require the reference measure $\rho$ to be absolutely continuous with respect to the Lebesgue measure $\lambda$, however, part 3. requires $\rho$ and $\lambda$ to be equivalent. This is needed due to the application of \cite[Lemma 5.5.3]{ambrosio2008gradient} in Lemma \ref{lemma:pushforward-is-optimal} (see the proof in Section \ref{app:proximal-scheme} for details) and is naturally satisfied when $\rho \propto e^{-U}$ with a measurable potential $U: \mathbb{R}^d \to \mathbb{R}$  (which will be required in our convergence results in Theorem \ref{thm:exp-conv} anyway).

Assumptions \ref{assumption:L-derivative} and \ref{ass:lip-intrinsic} are concerned with the regularity of the Wasserstein gradient $\nabla_{\mu} F$ and are required in part 3. where we work with the proximal gradient scheme \eqref{eq:proximal-JKO} whose gradient descent step involves $\nabla_{\mu} F$.

The proof of Theorem \ref{thm:well-posedness} is split into three parts that can be found in Sections \ref{app:implicit-scheme}, \ref{app:semi-implicit-scheme} and \ref{app:proximal-scheme}, each corresponding to one of the schemes \eqref{eq:implicit-JKO}, \eqref{eq:semi-implicit-JKO} and \eqref{eq:proximal-JKO}, respectively. We now state the result on the Wasserstein sub-differentiability of the energy function, which includes also first order optimality conditions.

\begin{theorem}[Wasserstein sub-differentiability and optimality conditions]
\label{thm:optim}
    Let $F \in \mathcal{C}^1$, $\rho \in \mathcal{P}_2(\mathbb{R}^d)$, $\mu^n \in \mathcal{P}_2^{\rho}(\mathbb R^d)$ and let Assumptions \ref{ass:abs-cty-pi} and \ref{assumption:L-derivative} hold. 
    \begin{enumerate}
    \item If $F$ is weakly lower semi-continuous and convex in the sense of \eqref{eq:convexity_classical}, 
    then the unique minimizer $\mu^{n+1} \in \mathcal{P}_2^{\rho}(\mathbb R^d)$ of \eqref{eq:implicit-JKO} belongs to $\mathfrak C$ and satisfies
    \begin{equation}
    \label{eq:optim-implicit}
    \nabla_\mu F(\mu^{n+1}) + \sigma \nabla \log \frac{\mathrm{d}\mu^{n+1}}{\mathrm{d}\rho} = \frac{1}{\tau}\left(T_{\mu^{n+1}}^{\mu^n} - I_d\right), \quad \mu^{n+1}\text{-a.e.}
    \end{equation}
    \item The unique minimizer $\mu^{n+1} \in \mathcal{P}_2^{\rho}(\mathbb R^d)$ of \eqref{eq:semi-implicit-JKO} belongs to $\mathfrak C$ and satisfies
    \begin{equation}
    \label{eq:optim-semi-implicit}
    \nabla_\mu F(\mu^{n}) + \sigma \nabla \log \frac{\mathrm{d}\mu^{n+1}}{\mathrm{d}\rho} = \frac{1}{\tau}\left(T_{\mu^{n+1}}^{\mu^n} - I_d\right), \quad \mu^{n+1}\text{-a.e.}
    \end{equation}
    \item If Assumption \ref{ass:lip-intrinsic} holds and if $\tau < \frac{1}{L_F'},$ then the unique minimizer $\mu^{n+1} \in \mathcal{P}_2^{\rho}(\mathbb R^d)$ of \eqref{eq:proximal-JKO} belongs to $\mathfrak C$ and satisfies
    \begin{equation}
    \label{eq:optim-proximal}
    \sigma \nabla \log \frac{\mathrm{d}\mu^{n+1}}{\mathrm{d}\rho} = \frac{1}{\tau}\left(T_{\mu^{n+1}}^{\nu^{n+1}} - I_d\right), \quad \mu^{n+1}\text{-a.e.}
    \end{equation}
    \end{enumerate}
    Hence, in each of 1., 2., 3., if $\mu^0 \in \mathcal{P}_2^{\rho}(\mathbb R^d),$ then $\left(\mu^n\right)_{n \in \mathbb N} \subset \mathfrak C$ along the schemes \eqref{eq:implicit-JKO}, \eqref{eq:semi-implicit-JKO}, \eqref{eq:proximal-JKO}, and moreover \eqref{eq:optim-implicit}, \eqref{eq:optim-semi-implicit}, \eqref{eq:optim-proximal} hold, respectively.
\end{theorem}
The convexity of $U$ in Assumption \ref{ass:abs-cty-pi} is required in order to apply Theorem \ref{thm:subdiff-entropy}, which ensures that the iterates of the schemes \eqref{eq:implicit-JKO}, \eqref{eq:semi-implicit-JKO} and \eqref{eq:proximal-JKO} belong to the Sobolev regularity class $\mathfrak C$ (see the discussion after Theorem \ref{thm:subdiff-entropy} in Appendix \ref{app:OT}). Assumption \ref{ass:abs-cty-pi} also implies that $\rho$ is absolutely continuous with respect to $\lambda$, allowing us to invoke Theorem \ref{thm:existence-optimal-coupling} to guarantee existence and uniqueness of optimal transport maps along the schemes. Assuming that $F \in \mathcal{C}^1$, as well as that Assumption \ref{assumption:L-derivative} holds, is now required for all schemes, since the first order conditions in parts 1. and 2. involve $\nabla_{\mu} F$.

The proof of Theorem \ref{thm:optim} is, similarly to the proof of Theorem \ref{thm:well-posedness}, split into three parts that can be found in Sections \ref{app:implicit-scheme}, \ref{app:semi-implicit-scheme} and \ref{app:proximal-scheme} for the respective schemes. Next, we prove that the energy function decreases along the iterates $(\mu^n)_{n \geq 0}$ produced by the schemes \eqref{eq:implicit-JKO}, \eqref{eq:semi-implicit-JKO} and \eqref{eq:proximal-JKO}.
\begin{theorem}[Monotonic decrease of $F^\sigma$ along the schemes \eqref{eq:implicit-JKO}, \eqref{eq:semi-implicit-JKO}, \eqref{eq:proximal-JKO}]
\label{thm:monot-decrease}
Let $\rho \in \mathcal{P}_2(\mathbb{R}^d)$, $\mu^0 \in \mathcal{P}_2^{\rho}(\mathbb R^d).$
\begin{enumerate}
\item  If $F$ is weakly lower semi-continuous and convex in the sense of \eqref{eq:convexity_classical}, then for the iterates $\left(\mu^n\right)_{n \geq 1}$ of \eqref{eq:implicit-JKO}, we have
$F^{\sigma}(\mu^{n+1}) \leq F^{\sigma}(\mu^{n}), \text{ for all } n \in \mathbb N.$
\item If $F \in \mathcal{C}^1,$ Assumptions \ref{ass:abs-cty-pi}, \ref{assumption:L-derivative} and \ref{ass:lip-intrinsic} hold, and if $\tau < \frac{1}{2L_F'},$ then for the iterates $\left(\mu^n\right)_{n \geq 1}$ of \eqref{eq:semi-implicit-JKO}, we have
$F^{\sigma}(\mu^{n+1}) \leq F^{\sigma}(\mu^{n}), \text{ for all } n \in \mathbb N.$
\item If $F \in \mathcal{C}^1,$ Assumptions \ref{ass:abs-cty-pi}, \ref{assumption:L-derivative} and \ref{ass:lip-intrinsic} hold, and if $\tau < \frac{1}{L_F'},$ then for the iterates $\left(\mu^n\right)_{n \geq 1}$ of \eqref{eq:proximal-JKO}, we have
$F^{\sigma}(\mu^{n+1}) \leq F^{\sigma}(\mu^{n}), \text{ for all } n \in \mathbb N.$
\end{enumerate}
\end{theorem}
We emphasize that, for the scheme \eqref{eq:implicit-JKO}, the monotonic decay of the energy function is a direct consequence of the scheme’s definition, once well-posedness has been proved. In particular, unlike for the other two schemes, this monotonicity does not rely on the optimality condition provided in Theorem \ref{thm:optim}.
Because of the linearization of $F$ in \eqref{eq:semi-implicit-JKO} and the gradient descent step in \eqref{eq:proximal-JKO}, the analysis in parts 2. and 3. requires $F$ to be smooth with respect to the Wasserstein distance (cf. Lemma \ref{ass:relative-smoothness}), and hence in both parts 2. and 3. we impose Assumption \ref{ass:lip-intrinsic}. The proof of Theorem \ref{thm:monot-decrease} can be found in Section \ref{sec:monot-proof}. Finally, we present our main convergence result.

\begin{theorem}[Linear convergence of the schemes \eqref{eq:implicit-JKO}, \eqref{eq:semi-implicit-JKO}, \eqref{eq:proximal-JKO}]
\label{thm:exp-conv}
Let $F \in \mathcal{C}^1$, $\rho \in \mathcal{P}_2(\mathbb{R}^d)$, $\mu^0 \in \mathcal{P}_2^{\rho}(\mathbb R^d)$ and let Assumptions \ref{ass:convexity}, \ref{ass:abs-cty-pi}, \ref{assumption:L-derivative} and \ref{assumption:unif-lsi} hold.
\begin{enumerate}
\item For the iterates $\left(\mu^n\right)_{n \geq 1}$ of \eqref{eq:implicit-JKO}, we have
\begin{equation*}
F^{\sigma}(\mu^{n}) - F^{\sigma}(\mu_{\sigma}^*) \leq \left(1 + \tau \sigma C_{\sigma,F,\rho}\right)^{-n}\left(F^{\sigma}(\mu^{0}) - F^{\sigma}(\mu_{\sigma}^*)\right), \text{ for all } n \in \mathbb N.
\end{equation*}
\item If Assumption \ref{ass:lip-intrinsic} holds and if $\tau < \frac{1}{2L_F'},$ then for the iterates $\left(\mu^n\right)_{n \geq 1}$ of \eqref{eq:semi-implicit-JKO}, we have
\begin{equation*}
F^{\sigma}(\mu^{n}) - F^{\sigma}(\mu_{\sigma}^*) \leq \left(1 + \frac{\tau \sigma \left(1 - 2\tau L_F'\right)}{1 + (\tau L_F')^2}C_{\sigma,F,\rho}\right)^{-n}\left(F^{\sigma}(\mu^{0}) - F^{\sigma}(\mu_{\sigma}^*)\right), \text{ for all } n \in \mathbb N.
\end{equation*}
\item If Assumption \ref{ass:lip-intrinsic} holds and if $\tau < \frac{1}{L_F'},$ then for the iterates $\left(\mu^n\right)_{n \geq 1}$ of \eqref{eq:proximal-JKO}, we have
\begin{equation*}
F^{\sigma}(\mu^{n}) - F^{\sigma}(\mu_{\sigma}^*) \leq \left(1 + \frac{\tau \sigma\left(1 - \tau L_F'\right)}{1 + 4(\tau L_F')^2}C_{\sigma,F,\rho}\right)^{-n}\left(F^{\sigma}(\mu^{0}) - F^{\sigma}(\mu_{\sigma}^*)\right), \text{ for all } n \in \mathbb N.
\end{equation*}
\end{enumerate}
\end{theorem}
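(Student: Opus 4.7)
My plan is to adapt the continuous-time LSI-plus-sandwich argument from \cite{Nitanda2022ConvexAO,chizat2022meanfield} to discrete time. For each of the three schemes the proof follows the same three-step template: first, a first-order optimality condition at $\mu^{n+1}$ (provided by the auxiliary existence, Sobolev-regularity, and optimality results mentioned before this theorem) that identifies $\sigma\nabla\log(\mathrm{d}\mu^{n+1}/\mathrm{d}\Phi[\cdot])$ with a Wasserstein displacement; second, the right-hand sandwich bound of Lemma \ref{lemma:sandwich} combined with the uniform LSI \eqref{eq:sobolev-ineq}, which yields $A_{n+1}\le \tfrac{e^{4C_F/\sigma}}{2\alpha_U\sigma}I(\mu^{n+1}|\Phi[\mu^{n+1}])$ for $A_n:=F^\sigma(\mu^n)-F^\sigma(\mu_\sigma^*)$; and third, an energy-dissipation inequality obtained by testing the scheme's minimization against $\mu^n$, which trades $\mathcal{W}_2^2(\mu^{n+1},\mu^n)$ for the telescoping decrement $A_n-A_{n+1}$. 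Combining these yields a geometric recursion $A_{n+1}\le(1+c_\tau)^{-1}A_n$ with a scheme-dependent constant $c_\tau>0$.

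For (i), the optimality condition reads
\begin{equation*}
\nabla_\mu F(\mu^{n+1})(\cdot) + \sigma\nabla\log\frac{\mathrm{d}\mu^{n+1}}{\mathrm{d}\pi} + \frac{1}{\tau}\bigl(I_d - T_{\mu^{n+1}}^{\mu^n}\bigr) = 0\qquad \mu^{n+1}\text{-a.e.,}
\end{equation*}
and since $\sigma\nabla\log(\mathrm{d}\mu^{n+1}/\mathrm{d}\Phi[\mu^{n+1}])=\nabla_\mu F(\mu^{n+1})+\sigma\nabla\log(\mathrm{d}\mu^{n+1}/\mathrm{d}\pi)$ by the definition \eqref{eq:proximal-gibbs} of $\Phi$, squaring and integrating against $\mu^{n+1}$ gives $\sigma^2 I(\mu^{n+1}|\Phi[\mu^{n+1}])=\tau^{-2}\mathcal{W}_2^2(\mu^{n+1},\mu^n)$. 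Meanwhile, testing the minimization in \eqref{eq:implicit-JKO} with the competitor $\mu=\mu^n$ yields $F^\sigma(\mu^{n+1})+\tfrac{1}{2\tau}\mathcal{W}_2^2(\mu^{n+1},\mu^n)\le F^\sigma(\mu^n)$, i.e.\ $\mathcal{W}_2^2(\mu^{n+1},\mu^n)\le 2\tau(A_n-A_{n+1})$. Chaining the three bounds yields $A_{n+1}\le (e^{4C_F/\sigma}/(\alpha_U\sigma\tau))(A_n-A_{n+1})$, which rearranges to the stated rate.

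For (ii) and (iii), the first step produces an identity involving $\Phi[\mu^n]$ (respectively a variant built from $\nu^{n+1}$) rather than $\Phi[\mu^{n+1}]$; to pass to $I(\mu^{n+1}|\Phi[\mu^{n+1}])$ I add and subtract $\nabla_\mu F$-terms and invoke Assumption \ref{ass:lip-intrinsic}, which through the explicit form \eqref{eq:proximal-gibbs} gives $|\nabla\log(\mathrm{d}\Phi[\mu^{n+1}]/\mathrm{d}\Phi[\mu^n])|\le \sigma^{-1}L_F'\mathcal{W}_2(\mu^{n+1},\mu^n)$. An $L^2_{\mu^{n+1}}$ triangle inequality then introduces the multiplicative factor $1+(\tau L_F')^2$ in (ii) and $1+4(\tau L_F')^2$ in (iii) (two perturbations: one from the pushforward step and one from $\mu^n$ to $\mu^{n+1}$). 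The energy-dissipation step also needs upgrading: for (ii) a Taylor expansion of $F(\mu^{n+1})$ around $\mu^n$ along the displacement interpolation, using the smoothness implied by Assumption \ref{ass:lip-intrinsic}, produces $F^\sigma(\mu^n)-F^\sigma(\mu^{n+1})\ge \tfrac{1-2\tau L_F'}{2\tau}\mathcal{W}_2^2(\mu^{n+1},\mu^n)$, whence the constraint $\tau<2/L_F'$; for (iii), $\nu^{n+1}$ is a gradient step where $I_d-\tau\nabla_\mu F(\mu^n)$ is monotone (and hence an optimal transport map) precisely when $\tau<1/L_F'$, and combining convexity along generalized geodesics of $\operatorname{KL}(\cdot|\pi)$ with Wasserstein smoothness of $F$ produces an analogous dissipation. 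I expect the main technical obstacle to be the sharp bookkeeping in (ii) and (iii) that produces the exact coefficients $(1-2\tau L_F')/(1+(\tau L_F')^2)$ and $(1-\tau L_F')/(1+4(\tau L_F')^2)$; by contrast, the a priori delicate issue of Wasserstein sub-differentiability of $\operatorname{KL}(\cdot|\pi)$ at each $\mu^{n+1}$ is cleanly isolated and resolved in the auxiliary propositions cited before the theorem.
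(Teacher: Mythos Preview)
Your proposal is correct and tracks the paper's argument closely. Part (i) is identical to the paper. For part (iii) you have the right ingredients: the pushforward step is an optimal map under $\tau<1/L_F'$, convexity of $\operatorname{KL}(\cdot|\pi)$ along generalized geodesics (the paper invokes \cite[Lemma~4]{korbaproximal}), Wasserstein smoothness of $F$, and the Lipschitz perturbation to pass from $\Phi[\mu^n]$ to $\Phi[\mu^{n+1}]$.

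The one place you diverge slightly is the energy-dissipation step in (ii). You propose to test the minimization \eqref{eq:semi-implicit-JKO} against $\mu^n$ and then close the gap between the linearized $F$ and the true $F$ by smoothness. The paper instead never tests against $\mu^n$: it combines the Wasserstein-smoothness bound for $F$ (Lemma~\ref{ass:relative-smoothness}) with a geodesic-convexity inequality for $\operatorname{KL}(\cdot|\pi)$ (Lemma~\ref{lemma:control-geod-KL}) to bound $F^\sigma(\mu^{n+1})-F^\sigma(\mu^n)$ by an inner product which is then evaluated via the optimality condition as $-\tau\sigma^2 I(\mu^{n+1}|\Phi[\mu^n])$, plus a $2L_F'\mathcal W_2^2$ error from commuting $\nabla_\mu F(\mu^n)(x)$ past $T_{\mu^n}^{\mu^{n+1}}$. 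Your route avoids Lemma~\ref{lemma:control-geod-KL} entirely (the KL decrement comes for free from the minimization), at the cost of a slightly smaller dissipation constant; the paper's route recovers the full $\tau\sigma^2 I$ rather than $\tfrac{1}{2}\tau\sigma^2 I$, hence the factor-of-two discrepancy you flagged. Both approaches are valid and yield the same qualitative rate.
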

Assumption \ref{ass:convexity} is needed in all parts of Theorem \ref{thm:exp-conv}, since the proofs rely on Lemma \ref{lemma:sandwich}, where flat differentiability and convexity of $F$ are necessary. 
Moreover, the proofs of all parts crucially rely on the proximal measure $\Phi_{\sigma}[\mu]$ satisfying the uniform log-Sobolev inequality (Assumption \ref{assumption:unif-lsi}).

The proof of Theorem \ref{thm:exp-conv} can be found in Section \ref{sec:convergence-proof}. As mentioned in Section \ref{sec:prelims}, once we have Theorem \ref{thm:exp-conv}, the convergence with respect to $\mu \mapsto \operatorname{KL}(\mu|\mu_{\sigma}^*)$ and $\mu \mapsto \mathcal{W}_2^2(\mu, \mu_{\sigma}^*)$ follows, with the same rate, from Lemma \ref{lemma:sandwich} and the Talagrand inequality \eqref{eq:Talagrand-ineq}. 
\begin{corollary}[Linear convergence in $\operatorname{KL}$ and $\mathcal{W}_2^2$]
\label{corollary:conv-schemes-kl-wass}
For $\kappa > 1$ corresponding to each rate in 1., 2. and 3. in Theorem \ref{thm:exp-conv}, we obtain 
\begin{equation*}
\begin{split}
\operatorname{KL}\left(\mu^n|\mu_{\sigma}^*\right) & \leq \frac{1}{\sigma}\kappa^{-n}\left(F^{\sigma}(\mu^{0}) - F^{\sigma}(\mu_{\sigma}^*)\right)\text{ and }\\
\mathcal{W}_2^2\left(\mu^n, \mu_{\sigma}^*\right) & \leq \frac{2}{\sigma C_{\sigma,F,\rho}}\kappa^{-n}\left(F^{\sigma}(\mu^{0}) - F^{\sigma}(\mu_{\sigma}^*)\right), \text{ for all } n \in \mathbb N.
\end{split}
\end{equation*}
\end{corollary}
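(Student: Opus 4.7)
The proof should essentially be a short chaining of three already-established ingredients: the left-hand side of the ``sandwich'' lemma (Lemma \ref{lemma:sandwich}), the just-proven linear decay of the objective gap (Theorem \ref{thm:exp-conv}), and Talagrand's inequality \eqref{eq:Talagrand-ineq} applied to $\mu_\sigma^*$. So my plan is to simply unpack the two bounds and cite these results in sequence, with no new analysis required.

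First, for the relative entropy bound, I would fix $n \in \mathbb N$ and apply Lemma \ref{lemma:sandwich} with $\mu = \mu^n$ to obtain
\begin{equation*}
\sigma \operatorname{KL}(\mu^n | \mu_\sigma^*) \leq F^{\sigma}(\mu^n) - F^{\sigma}(\mu_\sigma^*).
\end{equation*}
Dividing by $\sigma$ and then inserting the appropriate case of Theorem \ref{thm:exp-conv} (choosing $\kappa$ as the corresponding rate for whichever of \eqref{eq:implicit-JKO}, \eqref{eq:semi-implicit-JKO}, \eqref{eq:proximal-JKO} is under consideration) immediately yields
\begin{equation*}
\operatorname{KL}(\mu^n | \mu_\sigma^*) \leq \frac{1}{\sigma}\bigl(F^{\sigma}(\mu^n) - F^{\sigma}(\mu_\sigma^*)\bigr) \leq \frac{1}{\sigma}\kappa^{-n}\bigl(F^{\sigma}(\mu^0) - F^{\sigma}(\mu_\sigma^*)\bigr).
\end{equation*}

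Second, for the Wasserstein bound, the key observation (already flagged in Section \ref{sec:prelims}) is that $\mu_\sigma^* = \Phi[\mu_\sigma^*]$, so Talagrand's inequality \eqref{eq:Talagrand-ineq} specializes to
\begin{equation*}
\mathcal{W}_2^2(\mu^n, \mu_\sigma^*) \leq \frac{2 e^{4C_F/\sigma}}{\alpha_U}\operatorname{KL}(\mu^n | \mu_\sigma^*).
\end{equation*}
Composing this with the KL bound obtained in the previous step gives
\begin{equation*}
\mathcal{W}_2^2(\mu^n, \mu_\sigma^*) \leq \frac{2 e^{4C_F/\sigma}}{\alpha_U \sigma}\kappa^{-n}\bigl(F^{\sigma}(\mu^0) - F^{\sigma}(\mu_\sigma^*)\bigr),
\end{equation*}
which is the second asserted inequality.

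There is no real obstacle here: all the hard work has been done in Theorem \ref{thm:exp-conv} and in the preliminary identification of the LSI/Talagrand constants for $\Phi[\mu]$ (and hence for $\mu_\sigma^*$). The only minor point worth stating explicitly in the write-up is that Theorem \ref{thm:exp-conv} is to be invoked with the $\kappa$ matching the scheme at hand, so the corollary in fact comprises three separate statements sharing one two-line proof.
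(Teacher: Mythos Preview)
Your proposal is correct and matches the paper's approach exactly: the paper does not give a separate proof of the corollary but simply remarks (just before stating it) that the result follows from Lemma~\ref{lemma:sandwich} and Talagrand's inequality~\eqref{eq:Talagrand-ineq}, which is precisely the two-step chain you wrote out.
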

Note that a detailed comparison between our results and the corresponding results available in related papers \cite{korbaproximal, luu2024nongeodesicallyconvex, zhu2025convergenceanalysiswassersteinproximal} has been presented in Section \ref{sec:RelatedWorks}.

\section{Two-layer mean-field neural networks}
\label{appendix:verify-assumptions}
In this section, we consider the example of a loss function for a two-layer mean-field neural network and showing that, under regularity conditions on the activation function, the loss function satisfies Assumptions \ref{ass:convexity}, \ref{assumption:L-derivative}, \ref{ass:lip-intrinsic}, \ref{assumption:unif-lsi}.

The pioneering works \cite{Chizat2018OnTG,Mei2018AMF,Rotskoff2018NeuralNA} showed that training a two-layer neural network in the mean-field regime can be viewed as minimizing a convex function over the space of probability distributions of the parameters of the network. This perspective has provided a framework for studying the convergence of various training dynamics for two-layer mean-field neural networks (see, e.g., \cite{10.1214/20-AIHP1140, chizat2022meanfield, Nitanda2022ConvexAO, suzuki2023meanfield,zhenjiefict}).

We now briefly describe the framework following \cite{10.1214/20-AIHP1140}. Let $C^2(\mathbb R)$ be the space of twice continuously differentiable functions on $\mathbb R$. Let $\nu$ be a finite measure with compact support represent the training data $(y,z) \in \mathbb R \times \mathbb R^{d-1},$ let $(w,b) \in \mathbb R^{d-1} \times \mathbb R$ be the parameters of the neural network and let $\varphi:\mathbb R \to \mathbb R$ be an activation function in $C^2(\mathbb R)$.

For $x \coloneqq (w,b) \in \mathbb R^d$ and $z \in \mathbb R^{d-1},$ define the function $\hat{\varphi}(x,z) \coloneqq \varphi(\langle w, z\rangle + b)$. The training of the two-layer neural network aims to find the optimal set of parameters $\{x_i\}_{i=1}^N$ which minimize the non-convex loss function
\begin{equation}
\label{eq:non-mean-field}
F_N(x_1,...,x_N) \coloneqq \frac{1}{2}\int_{\mathbb R^d} \left|y-\frac{1}{N}\sum_{i=1}^N \hat{\varphi}(x_i,z)\right|^2 \nu(\mathrm{d}y, \mathrm{d}z).
\end{equation}
Instead of the non-convex minimization problem \eqref{eq:non-mean-field}, we consider the mean-field optimization problem
\begin{equation*}
\min_{\mu \in \mathcal{P}_2(\mathbb R^d)} F(\mu), \quad \text{ with } F(\mu) \coloneqq \frac{1}{2}\int_{\mathbb R^d} \left|y-\mathbb E^{X \sim \mu}[\hat{\varphi}(X, z)]\right|^2 \nu(\mathrm{d}y, \mathrm{d}z).
\end{equation*}
Observe that by linearity of the expectation in $\mu$ and convexity of $|\cdot|^2,$ the function $F$ satisfies the flat-convexity condition \eqref{eq:convexity_classical}.

We stress that $F$ is not geodesically convex in the Wasserstein space. If it were, then this would require $F_N$ to be convex, which is clearly not the case (see also \cite[Remark 3.4]{chen2023uniformintimepropagationchaosmean} for more details). Now we show that $F$ satisfies Assumptions \ref{ass:convexity}, \ref{assumption:L-derivative}, \ref{ass:lip-intrinsic} and \ref{assumption:unif-lsi}.
\begin{proposition}
    Let $\nu$ be a finite measure with compact support. Let the activation function $\varphi \in C^2(\mathbb R)$ have bounded derivatives of first and second order. Then $F$ satisfies Assumptions \ref{ass:convexity}, \ref{assumption:L-derivative} and \ref{ass:lip-intrinsic}. Moreover, if $\rho$ satisfies LSI, then Assumption \ref{assumption:unif-lsi} also holds.
\end{proposition}
\begin{proof}
    Clearly, $F \geq 0.$ To show that $F \in \mathcal{C}^1,$ let $\mu^{\varepsilon} \coloneqq \mu + \varepsilon(\mu'-\mu),$ for $\varepsilon \in [0,1]$ and $\mu,\mu' \in \mathcal{P}_2(\mathbb R^d).$ Then, a straightforward calculation gives
\begin{align*}
    F(\mu^{\varepsilon}) &= F(\mu) - \varepsilon\int_{\mathbb R^d}\int_{\mathbb R^d} \left(y - \mathbb E^{X \sim \mu}[\hat{\varphi}(X, z)]\right)\hat{\varphi}(x, z)(\mu'-\mu)(\mathrm{d}x)\nu(\mathrm{d}y, \mathrm{d}z)\\
    &+\frac{\varepsilon^2}{2}\int_{\mathbb R^d}\left|\int_{\mathbb R^d}\hat{\varphi}(x, z)(\mu'-\mu)(\mathrm{d}x)\right|^2\nu(\mathrm{d}y, \mathrm{d}z).
\end{align*}
Rearranging and dividing by $\varepsilon$ yields
\begin{multline}
\label{eq:NN-is-C^1}
\begin{aligned}
    \frac{F(\mu^{\varepsilon})-F(\mu)}{\varepsilon} &= -\int_{\mathbb R^d}\int_{\mathbb R^d} \left(y - \mathbb E^{X \sim \mu}[\hat{\varphi}(X, z)]\right)\hat{\varphi}(x, z)(\mu'-\mu)(\mathrm{d}x)\nu(\mathrm{d}y, \mathrm{d}z)\\
    &+\frac{\varepsilon}{2}\int_{\mathbb R^d}\left|\int_{\mathbb R^d}\hat{\varphi}(x, z)(\mu'-\mu)(\mathrm{d}x)\right|^2\nu(\mathrm{d}y, \mathrm{d}z).
\end{aligned}
\end{multline}
Observe that
\begin{equation*}
    \nabla_x \hat{\varphi}(x,z) = \left(\varphi'(\langle w,z \rangle + b), \varphi'(\langle w,z\rangle + b) z\right).
\end{equation*}
Because $|x'-x| = \sqrt{|w'-w|^2 +|b'-b|^2},$ we have $|x'-x| \geq |w'-w|$ and $|x'-x| \geq |b'-b|.$ Since $\varphi'$ is bounded, $\hat{\varphi}(\cdot,z)$ is Lipschitz for every $z.$ More precisely, for all $z \in \mathbb R^{d-1},$
\begin{equation*}
    \left|\hat{\varphi}(x',z) - \hat{\varphi}(x,z)\right| \leq \|\varphi'\|_\infty(1+|z|)|x'-x|.
\end{equation*}
Also, $\varphi(0,z) = \varphi(0),$ for $z \in \mathbb R^{d-1},$ hence
\begin{equation*}
    \left|\hat{\varphi}(x,z)\right| \leq |\varphi(0)| + \|\varphi'\|_\infty(1+|z|)|x|.
\end{equation*}
Because $\mu \in \mathcal{P}_2(\mathbb R^d),$ the first moment $M_1 \coloneqq \int_{\mathbb R^d}|x|\mu(\mathrm{d}x)$ is finite. Using the bound on $\left|\hat{\varphi}(x,z)\right|,$ we obtain
\begin{equation*}
    \mathbb E^{X \sim \mu}[|\hat{\varphi}(X, z)|] \leq C_\varphi(1+|z|),
\end{equation*}
where $C_\varphi \coloneqq |\varphi(0)| + M_1\|\varphi'\|_\infty > 0,$ and therefore
\begin{align*}
    &\left|\int_{\mathbb R^d}\int_{\mathbb R^d} \left(y - \mathbb E^{X \sim \mu}[\hat{\varphi}(X, z)]\right)\hat{\varphi}(x, z)\mu(\mathrm{d}x)\nu(\mathrm{d}y, \mathrm{d}z)\right|\\
    &\leq \int_{\mathbb R^d}\int_{\mathbb R^d} \left(|y| + \mathbb E^{X \sim \mu}[|\hat{\varphi}(X, z)|]\right)|\hat{\varphi}(x, z)|\mu(\mathrm{d}x)\nu(\mathrm{d}y, \mathrm{d}z)\\
    &=\int_{\mathbb R^d}|y|\mathbb E^{X \sim \mu}[|\hat{\varphi}(X, z)|]\nu(\mathrm{d}y, \mathrm{d}z)+\int_{\mathbb R^d}\left(\mathbb E^{X \sim \mu}[|\hat{\varphi}(X, z)|]\right)^2\nu(\mathrm{d}y, \mathrm{d}z)\\
    &\leq C_\varphi\int_{\mathbb R^d}|y|(1+|z|)\nu(\mathrm{d}y, \mathrm{d}z) +C_\varphi^2\int_{\mathbb R^d} (1+|z|)^2\nu(\mathrm{d}y, \mathrm{d}z)\\
    &\leq C_\varphi \nu(\mathbb R^d) \sup_{(y,z) \in \text{supp}(\nu)}|y|(1+|z|) +C_\varphi^2 \nu(\mathbb R^d) \sup_{(y,z) \in \text{supp}(\nu)}(1+|z|)^2 < \infty.
\end{align*}
From this finiteness one also gets the analogous bound when $\mu$ is replaced by $\mu'.$ Therefore,
\begin{align*}
    &\int_{\mathbb R^d}\int_{\mathbb R^d} \left|\left(y - \mathbb E^{X \sim \mu}[\hat{\varphi}(X, z)]\right)\hat{\varphi}(x, z)\right||\mu'-\mu|(\mathrm{d}x)\nu(\mathrm{d}y, \mathrm{d}z)\\
    &\leq \int_{\mathbb R^d}\int_{\mathbb R^d} \left|\left(y - \mathbb E^{X \sim \mu}[\hat{\varphi}(X, z)]\right)\hat{\varphi}(x, z)\right|\mu'(\mathrm{d}x)\nu(\mathrm{d}y, \mathrm{d}z)\\
    &+\int_{\mathbb R^d}\int_{\mathbb R^d} \left|\left(y - \mathbb E^{X \sim \mu}[\hat{\varphi}(X, z)]\right)\hat{\varphi}(x, z)\right|\mu(\mathrm{d}x)\nu(\mathrm{d}y, \mathrm{d}z) < \infty.
\end{align*}
Thus, by Fubini’s theorem, we may interchange the order of integration and write
\begin{align*}
    &-\int_{\mathbb R^d}\int_{\mathbb R^d} \left(y - \mathbb E^{X \sim \mu}[\hat{\varphi}(X, z)]\right)\hat{\varphi}(x, z)(\mu'-\mu)(\mathrm{d}x)\nu(\mathrm{d}y, \mathrm{d}z)\\
    &=-\int_{\mathbb R^d}\int_{\mathbb R^d} \left(y - \mathbb E^{X \sim \mu}[\hat{\varphi}(X, z)]\right)\hat{\varphi}(x, z)\nu(\mathrm{d}y, \mathrm{d}z)(\mu'-\mu)(\mathrm{d}x).
\end{align*}
Passing to the limit in \eqref{eq:NN-is-C^1}, as $\varepsilon \searrow 0,$ therefore yields
\begin{equation*}
    \lim_{\varepsilon \searrow 0 }\frac{F(\mu^\varepsilon)- F(\mu)}{\varepsilon} = -\int_{\mathbb R^d}\int_{\mathbb R^d} \left(y - \mathbb E^{X \sim \mu}[\hat{\varphi}(X, z)]\right)\hat{\varphi}(x, z)\nu(\mathrm{d}y, \mathrm{d}z)(\mu'-\mu)(\mathrm{d}x),
\end{equation*}
so the flat derivative is
\begin{equation*}
    \frac{\delta F}{\delta \mu}(\mu, x) = -\int_{\mathbb R^d} \left(y - \mathbb E^{X \sim \mu}[\hat{\varphi}(X, z)]\right)\hat{\varphi}(x, z)\nu(\mathrm{d}y, \mathrm{d}z).
\end{equation*}
Since $F \in \mathcal{C}^1$ and it satisfies the flat-convexity condition \eqref{eq:convexity_classical}, it satisfies Assumption \ref{ass:convexity}. 

For Assumption \ref{assumption:L-derivative}, using the Euclidean norm $|(u,v)| = \sqrt{|u|^2 + |v|^2},$ we also get
\begin{equation*}
    |\nabla_x\hat{\varphi}(x, z)| \leq \sqrt{1+|z|^2}\|\varphi'\|_\infty \leq (1+|z|)\|\varphi'\|_\infty,
\end{equation*}
where we used the standard inequality $\sqrt{1+a^2} \leq 1+a$ for all $a \geq 0.$ From this bound one sees that $F$ is Wasserstein differentiable with
\begin{equation*}
\nabla_\mu F(\mu)(x) = -\int_{\mathbb R^d} \left(y - \mathbb E^{X \sim \mu}[\hat{\varphi}(X, z)]\right)\nabla_x\hat{\varphi}(x, z)\nu(\mathrm{d}y, \mathrm{d}z).
\end{equation*}
To verify Assumption \ref{ass:lip-intrinsic}, note that $\varphi''$ bounded implies
\begin{align*}
    \left|\nabla_x\hat{\varphi}(x',z) - \nabla_x\hat{\varphi}(x,z)\right| &\leq \sqrt{1+|z|^2}\left|\varphi'(\langle w',z \rangle + b')-\varphi'(\langle w,z \rangle + b)\right|\\
    &\leq \|\varphi''\|_{\infty}\sqrt{1+|z|^2}\left(|w'-w||z|+|b'-b|\right)\\
    &\leq \|\varphi''\|_{\infty}\sqrt{1+|z|^2}\left(1+|z|\right)|x'-x|\\
    &\leq \|\varphi''\|_{\infty}\left(1+|z|\right)^2|x'-x|.
\end{align*}
Now write
\begin{equation}
\begin{aligned}
\label{eq:verify-lip}
    &\left|\nabla_\mu F(\mu')(x')-\nabla_\mu F(\mu)(x)\right|\\ 
    &\leq \left|\nabla_\mu F(\mu')(x')- \nabla_\mu F(\mu)(x')\right| + \left|\nabla_\mu F(\mu)(x') + \nabla_\mu F(\mu)(x)\right|\\
    &\leq \int_{\mathbb R^d}\left|\int_{\mathbb R^d}\hat{\varphi}(x,z)(\mu'-\mu)(\mathrm{d}x)\right||\nabla_x \hat{\varphi}(x',z)|\nu(\mathrm{d}y, \mathrm{d}z)\\
    &+\int_{\mathbb R^d}\left|y - \mathbb E^{X \sim \mu}[\hat{\varphi}(X, z)]\right|\left|\nabla_x\hat{\varphi}(x',z) - \nabla_x\hat{\varphi}(x,z)\right| \nu(\mathrm{d}y, \mathrm{d}z)\\
    &\leq \|\varphi'\|_\infty \int_{\mathbb R^d}\left|\int_{\mathbb R^d}\hat{\varphi}(x,z)(\mu'-\mu)(\mathrm{d}x)\right|(1+|z|)\nu(\mathrm{d}y, \mathrm{d}z)\\
    &+\|\varphi''\|_{\infty}|x'-x|\int_{\mathbb R^d}\left|y - \mathbb E^{X \sim \mu}[\hat{\varphi}(X, z)]\right|\left(1+|z|\right)^2 \nu(\mathrm{d}y, \mathrm{d}z).
\end{aligned}
\end{equation}
On the one hand, we have
\begin{equation}
\begin{aligned}
\label{eq:rubinstein}
    \left|\int_{\mathbb R^d}\hat{\varphi}(x,z)(\mu'-\mu)(\mathrm{d}x)\right| &\leq \|\varphi'\|_\infty (1+|z|) \mathcal{W}_1(\mu',\mu)\\ 
    &\leq \|\varphi'\|_\infty (1+|z|) \mathcal{W}_2(\mu',\mu),
\end{aligned}
\end{equation}
by the Kantorovich–Rubinstein duality for the $L^1$-Wasserstein distance (cf. \cite[Remark 6.5]{villani2008optimal}) since $\hat{\varphi}(\cdot,z)$ is Lipschitz continuous for all $z.$

On the other hand, we have
\begin{equation}
\begin{aligned}
\label{eq:marginal}
    &\int_{\mathbb R^d}\left|y - \mathbb E^{X \sim \mu}[\hat{\varphi}(X, z)]\right|\left(1+|z|\right)^2\nu(\mathrm{d}y, \mathrm{d}z)\\ 
    &\leq \int_{\mathbb R^d}\left(|y| + C_\varphi(1+|z|)\right)\left(1+|z|\right)^2\nu(\mathrm{d}y, \mathrm{d}z)\\
    &\leq \nu(\mathbb R^d)\sup_{(y,z) \in \text{supp}(\nu)}|y|\left(1+|z|\right)^2 + C_\varphi \nu(\mathbb R^d) \sup_{(y,z) \in \text{supp}(\nu)}\left(1+|z|\right)^3.
\end{aligned}
\end{equation}
Combining estimates \eqref{eq:rubinstein} and \eqref{eq:marginal} with \eqref{eq:verify-lip} gives
\begin{align*}
    &\left|\nabla_\mu F(\mu')(x')-\nabla_\mu F(\mu)(x)\right|\\ 
    &\leq \|\varphi'\|_\infty^2 \nu(\mathbb R^d)\mathcal{W}_2(\mu',\mu) \sup_{(y,z) \in \text{supp}(\nu)}(1+|z|)^2\\
    &+\|\varphi''\|_{\infty}|x'-x|\Bigg(\sup_{(y,z) \in \text{supp}(\nu)}|y|\left(1+|z|\right)^2 + C_\varphi\sup_{(y,z) \in \text{supp}(\nu)}\left(1+|z|\right)^3\Bigg)\nu(\mathbb R^d)\\
    &=L_F'\left(|x'-x| + \mathcal{W}_2(\mu',\mu)\right),
\end{align*}
where one can take $L_F'$ to be $\nu(\mathbb R^d)$ multiplied by the maximum of the two explicit constants 
\begin{align*}
   &\|\varphi'\|_\infty^2 \sup_{(y,z) \in \text{supp}(\nu)}(1+|z|)^2 \quad \text{and}\\ 
   &\|\varphi''\|_{\infty}\Bigg(\sup_{(y,z) \in \text{supp}(\nu)}|y|\left(1+|z|\right)^2 + C_\varphi\sup_{(y,z) \in \text{supp}(\nu)}\left(1+|z|\right)^3\Bigg).
\end{align*}
If $\rho$ satisfies LSI, then Assumption \ref{assumption:unif-lsi} holds by virtue of the second criterion stated in Remark \ref{remark:sufficient-criteria-lsi}. Indeed, by an argument analogous to the one used in verifying Assumption \ref{ass:lip-intrinsic}, we obtain
\begin{align*}
    &\left|\frac{\delta F}{\delta \mu}(\mu, x') - \frac{\delta F}{\delta \mu}(\mu, x)\right|\\ 
    &\leq \int_{\mathbb R^d}\left|y - \mathbb E^{X \sim \mu}[\hat{\varphi}(X, z)]\right|\left|\hat{\varphi}(x',z) - \hat{\varphi}(x,z)\right| \nu(\mathrm{d}y, \mathrm{d}z)\\
    &\leq \|\varphi'\|_\infty|x'-x|\int_{\mathbb R^d}\left(|y| + \mathbb E^{X \sim \mu}[|\hat{\varphi}(X, z)|]\right) (1+|z|)\nu(\mathrm{d}y, \mathrm{d}z)\\
    &\leq \|\varphi'\|_\infty|x'-x|\int_{\mathbb R^d}\left(|y| + C_\varphi (1+|z|)\right) (1+|z|)\nu(\mathrm{d}y, \mathrm{d}z)\\
    &\leq  \|\varphi'\|_\infty\nu(\mathbb R^d)\Bigg(\sup_{(y,z) \in \text{supp}(\nu)}|y|\left(1+|z|\right) + C_\varphi \sup_{(y,z) \in \text{supp}(\nu)}\left(1+|z|\right)^2\Bigg)|x'-x|,
\end{align*}
which shows that $\frac{\delta F}{\delta \mu}(\mu, \cdot)$ is Lipschitz for all $\mu$ with a Lipschitz constant uniform in $\mu$.
\end{proof}
Although the previous proposition, together with our main theorems, provides new convergence guarantees for schemes involving two-layer mean-field neural networks, it is important to emphasize the limitations of these results. 

In \cite[Section 3.1]{10.1214/20-AIHP1140}, the authors consider the neural network $\hat{\varphi}(x,z) \coloneqq \ell(b)\varphi(\langle w, z\rangle),$ where $\varphi \in C^2(\mathbb R)$ is bounded with bounded first and second derivatives and $\ell: \mathbb R \to [-K,K]$ is a clipping function with threshold $K > 0.$ In this case,
\begin{equation*}
\nabla_x \hat{\varphi}(x,z) = \left(\ell'(b)\varphi(\langle w,z \rangle), \varphi'(\langle w,z\rangle)z\right).
\end{equation*}
Verifying Assumption \ref{ass:lip-intrinsic} therefore reduces to showing that $\nabla_x \hat{\varphi}(\cdot,z)$ is Lipschitz, which in turn requires $\varphi$ to be bounded, thereby excluding unbounded activation functions. An analogous restriction appears in \cite[Proposition 5.1]{chizat2022meanfield}, where $\hat \varphi$ is assumed to be bounded, as well as smooth in $x,$ uniformly in $z$. Hence the setting of \cite{chizat2022meanfield} also excludes unbounded activations $\varphi$.

In contrast, we remove the boundedness requirement on $\varphi$ by adopting the network architecture $\hat{\varphi}(x,z) \coloneqq \varphi(\langle w,z \rangle + b)$, which is commonly used in practice; see, for instance, \cite[Chapter 6]{Goodfellow-et-al-2016}, \cite[Section 6.2]{Bishop:DeepLearning24}. We therefore assume that $\varphi \in C^2(\mathbb R)$ with bounded first and second derivatives, but without requiring $\varphi$ itself to be bounded.

As a consequence, in addition to bounded activations such as sigmoid and hyperbolic tangent (tanh), which are already covered by \cite[Section 3.1]{10.1214/20-AIHP1140} and \cite[Proposition 5.1]{chizat2022meanfield}, our setting also includes unbounded activations including GELU, SiLU and Softplus. The Softplus function, in particular, provides a smooth approximation of the ReLU activation. While ReLU is not differentiable at $0$, Softplus belongs to $C^\infty(\mathbb R)$. Extending the analysis to nonsmooth activations such as ReLU remains an open problem, as also noted in \cite{chizat2022meanfield}.

\section{Proof of Theorem \ref{thm:monot-decrease}}\label{sec:monot-proof}
Before we present the proof of Theorem \ref{thm:monot-decrease}, for convenience, we give a short calculation that we will repeatedly use in this proof and in the proof of Theorem \ref{thm:exp-conv}. For $F$ satisfying Assumption \ref{assumption:L-derivative}, $\rho \propto e^{-U},$ any $\mu \in \mathcal{P}_2(\mathbb R^d)$ and any $\mu' \in \mathcal{P}_2^{\lambda}(\mathbb R^d),$ it holds
\begin{equation}
\begin{aligned}
\label{eq:from-F-to-Gibbs}
\nabla_\mu F(\mu) + \sigma \nabla \log \frac{\mathrm{d}\mu'}{\mathrm{d}\rho} &= \nabla \frac{\delta F}{\delta \mu}(\mu, \cdot) + \sigma \nabla \log \frac{\mathrm{d}\mu'}{\mathrm{d}\rho}\\ 
& = -\sigma \nabla \log e^{-\frac{1}{\sigma}\frac{\delta F}{\delta \mu}(\mu,\cdot)} + \sigma \nabla \log \frac{\mathrm{d}\mu'}{\mathrm{d}\rho} 
= \sigma \nabla \log \frac{\mathrm{d}\mu'}{\mathrm{d}\Phi_{\sigma}[\mu]}\,,
\end{aligned}
\end{equation}
where the last equality follows from \eqref{eq:proximal-gibbs}.
\begin{proof}[Proof of Theorem \ref{thm:monot-decrease} 1.]
First, note that by Theorem \ref{thm:well-posedness} 1., the iterates $\left(\mu^n\right)_{n \in \mathbb N} \subset \mathcal{P}^\rho_2(\mathbb R^d).$ Since $\mu^{n+1} \in \mathcal{P}^\rho_2(\mathbb R^d)$ is a minimizer of \eqref{eq:implicit-JKO}, it follows that
\begin{equation}
\label{eq:mun+1}
F^{\sigma}(\mu^{n+1}) + \frac{1}{2\tau}\mathcal{W}_2^2(\mu^{n+1}, \mu^n) \leq F^{\sigma}(\mu^n) + \frac{1}{2\tau}\mathcal{W}_2^2(\mu^n, \mu^n) = F^{\sigma}(\mu^n).
\end{equation}
Because $\frac{1}{2\tau}\mathcal{W}_2^2(\mu^{n+1}, \mu^n) \geq 0,$ we obtain the conclusion.
\end{proof}
\begin{proof}[Proof of Theorem \ref{thm:monot-decrease} 2.]
    First, note that by Theorem \ref{thm:optim} 2., the iterates $\left(\mu^n\right)_{n \in \mathbb N} \subset \mathfrak C.$ Combining Lemma \ref{ass:relative-smoothness} with Lemma \ref{lemma:control-geod-KL} gives
\begin{multline}
\label{eq:smoothness-F-sigma}
\begin{aligned}
F^{\sigma}(\mu^{n+1}) - F^{\sigma}(\mu^{n}) - \left\langle \nabla_\mu F(\mu^n) + \sigma \nabla \log \frac{\mathrm{d}\mu^{n+1}}{\mathrm{d}\rho}\left(T_{\mu^n}^{\mu^{n+1}}\right), T_{\mu^n}^{\mu^{n+1}} - I_d \right\rangle_{L_{\mu^n}^2(\mathbb R^d)}\\ \leq L_F'\mathcal{W}_2^2(\mu^{n+1}, \mu^n).
\end{aligned}
\end{multline}
Using \eqref{eq:from-F-to-Gibbs} with $\mu = \mu^n$ and $\mu' = \mu^{n+1}$, observe that for $\mu^{n}\text{-a.e. } x,$ we have
\begin{multline}
\label{eq:identity-gradient}
\begin{aligned}
\nabla_\mu F(\mu^n)(x) &+ \sigma \nabla \log \frac{\mathrm{d}\mu^{n+1}}{\mathrm{d}\rho}\left(T_{\mu^n}^{\mu^{n+1}}(x)\right)\\ &= \nabla_\mu F(\mu^n)(x) - \nabla_\mu F(\mu^n)\left(T_{\mu^n}^{\mu^{n+1}}(x)\right)\\ 
& \qquad + \nabla_\mu F(\mu^n)\left(T_{\mu^n}^{\mu^{n+1}}(x)\right) + \sigma \nabla \log \frac{\mathrm{d}\mu^{n+1}}{\mathrm{d}\rho}\left(T_{\mu^n}^{\mu^{n+1}}(x)\right)\\
&= \nabla_\mu F(\mu^n)(x) - \nabla_\mu F(\mu^n)\left(T_{\mu^n}^{\mu^{n+1}}(x)\right) + \sigma \nabla \log \frac{\mathrm{d}\mu^{n+1}}{\mathrm{d}\Phi_{\sigma}[\mu^n]}\left(T_{\mu^n}^{\mu^{n+1}}(x)\right).
\end{aligned}
\end{multline}
Hence, using \eqref{eq:identity-gradient} in \eqref{eq:smoothness-F-sigma} gives
\begin{multline}
\label{eq:F-sigma-estimate}
\begin{aligned}
&F^{\sigma}(\mu^{n+1}) - F^{\sigma}(\mu^{n}) - \left\langle \sigma \nabla \log \frac{\mathrm{d}\mu^{n+1}}{\mathrm{d}\Phi_{\sigma}[\mu^n]}\left(T_{\mu^n}^{\mu^{n+1}}\right), T_{\mu^n}^{\mu^{n+1}} - I_d \right\rangle_{L_{\mu^n}^2(\mathbb R^d)}\\ &\leq L_F'\mathcal{W}_2^2(\mu^{n+1}, \mu^n) + \left\langle \nabla_\mu F(\mu^n) - \nabla_\mu F(\mu^n)\left(T_{\mu^n}^{\mu^{n+1}}\right), T_{\mu^n}^{\mu^{n+1}} - I_d \right\rangle_{L_{\mu^n}^2(\mathbb R^d)}\\
&\leq L_F'\mathcal{W}_2^2(\mu^{n+1}, \mu^n) + \left\|\nabla_\mu F(\mu^n) - \nabla_\mu F(\mu^n)\left(T_{\mu^n}^{\mu^{n+1}}\right)\right\|_{L_{\mu^n}^2(\mathbb R^d)}\left\|T_{\mu^n}^{\mu^{n+1}} - I_d\right\|_{L_{\mu^n}^2(\mathbb R^d)}\\
&\leq L_F'\mathcal{W}_2^2(\mu^{n+1}, \mu^n) + L_F'\left\|T_{\mu^n}^{\mu^{n+1}} - I_d\right\|^2_{L_{\mu^n}^2(\mathbb R^d)} =  2L_F'\mathcal{W}_2^2(\mu^{n+1}, \mu^n),
\end{aligned}
\end{multline}
where the second, third and last inequality follows from the Cauchy-Schwarz inequality, Assumption \ref{ass:lip-intrinsic} and Corollary \ref{corollary:wass-transport}, respectively.

By Theorem \ref{thm:optim} 2., the optimality condition for \eqref{eq:semi-implicit-JKO} reads
\begin{equation*}
\nabla_\mu F(\mu^n)(x) + \sigma \nabla \log \frac{\mathrm{d}\mu^{n+1}}{\mathrm{d}\rho}(x) = \frac{1}{\tau}\left(T_{\mu^{n+1}}^{\mu^n}(x) - x\right), \quad \text{for } \mu^{n+1}\text{-a.e. } x.
\end{equation*}
By \eqref{eq:from-F-to-Gibbs} with $\mu = \mu^n$ and $\mu' = \mu^{n+1}$, we obtain
\begin{equation}
\label{eq:foc-prox-gibbs}
\nabla \log \frac{\mathrm{d}\mu^{n+1}}{\mathrm{d}\Phi_{\sigma}[\mu^n]}(x) = \frac{1}{\tau\sigma}\left(T_{\mu^{n+1}}^{\mu^n}(x) - x\right), \quad \text{for } \mu^{n+1}\text{-a.e. } x.
\end{equation}
Hence, using the fact that $T_{\mu^n}^{\mu^{n+1}} \circ T_{\mu^{n+1}}^{\mu^n} = I_d,$ $\mu^{n+1}$-a.e., it follows that
\begin{equation}
\label{eq:foc-pushforward}
\nabla \log \frac{\mathrm{d}\mu^{n+1}}{\mathrm{d}\Phi_{\sigma}[\mu^n]}\left(T_{\mu^n}^{\mu^{n+1}}(x)\right) = \frac{1}{\tau\sigma}\left(x - T_{\mu^n}^{\mu^{n+1}}(x)\right), \quad \text{for } \mu^{n}\text{-a.e. } x.
\end{equation}
Multiplying \eqref{eq:foc-pushforward} by $T_{\mu^n}^{\mu^{n+1}}-I_d$ and integrating over $\mu^n$ gives
\begin{multline}
\label{eq:fisher-estimate}
\begin{aligned}
\left\langle \sigma \nabla \log \frac{\mathrm{d}\mu^{n+1}}{\mathrm{d}\Phi_{\sigma}[\mu^n]}\left(T_{\mu^n}^{\mu^{n+1}}\right), T_{\mu^n}^{\mu^{n+1}} - I_d \right\rangle_{L_{\mu^n}^2(\mathbb R^d)} &= -\tau \sigma^2 \left\|\nabla \log \frac{\mathrm{d}\mu^{n+1}}{\mathrm{d}\Phi_{\sigma}[\mu^n]}\left(T_{\mu^n}^{\mu^{n+1}}\right)\right\|^2_{L_{\mu^n}^2(\mathbb R^d)}\\ &= -\tau \sigma^2 I\left(\mu^{n+1}\big|\Phi_{\sigma}[\mu^n]\right).
\end{aligned}
\end{multline}
Also, squaring both sides of \eqref{eq:foc-prox-gibbs} and integrating with respect to $\mu^{n+1}$ gives
\begin{equation}
\label{eq:fisher=wass}
I\left(\mu^{n+1}\big|\Phi_{\sigma}[\mu^n]\right) = \frac{1}{\tau^2 \sigma^2}\mathcal{W}_2^2(\mu^{n+1}, \mu^n).
\end{equation}
Using \eqref{eq:fisher-estimate} and \eqref{eq:fisher=wass} in \eqref{eq:F-sigma-estimate} gives
\begin{multline}
\label{eq:dissipation-proximal}
\begin{aligned}
F^{\sigma}(\mu^{n+1}) &\leq F^{\sigma}(\mu^{n}) -\tau \sigma^2 I\left(\mu^{n+1}\big|\Phi_{\sigma}[\mu^n]\right) + 2 L_F'\mathcal{W}_2^2(\mu^{n+1}, \mu^n)\\ &= F^{\sigma}(\mu^{n}) -\tau \sigma^2 I\left(\mu^{n+1}\big|\Phi_{\sigma}[\mu^n]\right) + 2\tau^2 \sigma^2L_F' I\left(\mu^{n+1}\big|\Phi_{\sigma}[\mu^n]\right)\\
&= F^{\sigma}(\mu^{n}) - \tau \sigma^2 \left(1 - 2\tau L_F'\right)I\left(\mu^{n+1}\big|\Phi_{\sigma}[\mu^n]\right)\\
&\leq F^{\sigma}(\mu^{n}),
\end{aligned}
\end{multline}
where the last inequality follows from $\tau < \frac{1}{2L_F'}.$
\end{proof}
\begin{proof}[Proof of Theorem \ref{thm:monot-decrease} 3.]
First, note that by Theorem \ref{thm:optim} 3., the iterates $\left(\mu^n\right)_{n \in \mathbb N} \subset \mathfrak C,$ and by Lemma \ref{lemma:pushforward-is-optimal} that $(\nu^n)_{n \in \mathbb N} \subset \mathcal{P}_2^{\rho}(\mathbb R^d).$ By Theorem \ref{thm:optim} 3., we have
\begin{equation*}
\sigma \nabla \log \frac{\mathrm{d}\mu^{n+1}}{\mathrm{d}\rho}(x) = \frac{1}{\tau}\left(T_{\mu^{n+1}}^{\nu^{n+1}}(x) - x\right), \quad \text{for } \mu^{n+1}\text{-a.e. } x.
\end{equation*}
By Corollary \ref{corollary:existence-optimal-transport-proximal}, exists a unique $\nu^{n+1}$-a.e. optimal transport map $T_{\nu^{n+1}}^{\mu^{n+1}}:\mathbb R^d \to \mathbb R^d$ such that $T_{\mu^{n+1}}^{\nu^{n+1}} \circ T_{\nu^{n+1}}^{\mu^{n+1}} = I_d,$ $\nu^{n+1}$-a.e.. We thus have
\begin{equation}
\label{eq:opt-H}
T_{\nu^{n+1}}^{\mu^{n+1}} = I_d - \tau\sigma\nabla \log \frac{\mathrm{d}\mu^{n+1}}{\mathrm{d}\rho}\left(T_{\nu^{n+1}}^{\mu^{n+1}}\right), \quad \nu^{n+1}\text{-a.e.}.
\end{equation}
By Assumption \ref{ass:abs-cty-pi}, it follows that $\operatorname{KL}(\cdot|\rho)$ is convex along generalized geodesics in the Wasserstein space \cite[Theorem 9.4.11]{ambrosio2008gradient}, thus taking $\mu=\mu^{n+1}, \rho=\mu^n$ and $\nu=\nu^{n+1}$ in \cite[Lemma 4]{korbaproximal} gives
\begin{equation*}
\operatorname{KL}(\mu^{n+1}|\rho) \leq \operatorname{KL}(\mu^{n}|\rho) - \left\langle\nabla \log \frac{\mathrm{d}\mu^{n+1}}{\mathrm{d}\rho}\left(T_{\nu^{n+1}}^{\mu^{n+1}}\right), T_{\nu_{n+1}}^{\mu_{n}} - T_{\nu_{n+1}}^{\mu_{n+1}}\right\rangle_{L_{\nu^{n+1}}^2(\mathbb R^d)}.
\end{equation*}
By Corollary \ref{corollary:existence-optimal-transport-proximal}, we have $T_{\nu^{n+1}}^{\mu^n} = \left(I_d-\tau\nabla_\mu F(\mu^n)\right)^{-1},$ $\nu^{n+1}\text{-a.e.}.$ Therefore, 
\begin{equation*}
\operatorname{KL}(\mu^{n+1}|\rho) \leq \operatorname{KL}(\mu^{n}|\rho) - \left\langle\nabla \log \frac{\mathrm{d}\mu^{n+1}}{\mathrm{d}\rho}\left(T_{\nu^{n+1}}^{\mu^{n+1}}\right), \left(I_d-\tau\nabla_\mu F(\mu^n)\right)^{-1} -T_{\nu_{n+1}}^{\mu_{n+1}}\right\rangle_{L_{\nu^{n+1}}^2(\mathbb R^d)}.
\end{equation*}
Let us define the map $P_{\mu^n}^{\mu^{n+1}} \coloneqq T_{\nu^{n+1}}^{\mu^{n+1}} \circ T_{\mu^n}^{\nu^{n+1}}.$ Note that ${P_{\mu^n}^{\mu^{n+1}}}_{\#} \mu^n = \mu^{n+1}$, so then $T_{\nu^{n+1}}^{\mu^{n+1}} = P_{\mu^n}^{\mu^{n+1}} \circ \left(I_d-\tau\nabla_\mu F(\mu^n)\right)^{-1},$ and hence by \eqref{eq:pushforward}, the last inequality is equivalent to
\begin{equation}
\label{eq:descent-H}
\operatorname{KL}(\mu^{n+1}|\rho) \leq \operatorname{KL}(\mu^{n}|\rho) - \left\langle\nabla \log \frac{\mathrm{d}\mu^{n+1}}{\mathrm{d}\rho}\left(P_{\mu^n}^{\mu^{n+1}}\right), I_d - P_{\mu^n}^{\mu^{n+1}}\right\rangle_{L_{\mu^{n}}^2(\mathbb R^d)}.
\end{equation}
Using \eqref{eq:opt-H} with $T_{\nu^{n+1}}^{\mu^{n+1}} = P_{\mu^n}^{\mu^{n+1}} \circ  \left(I_d-\tau\nabla_\mu F(\mu^n)\right)^{-1}$ and then composing with $I_d-\tau\nabla_\mu F(\mu^n)$ yields
\begin{equation}
\label{eq:explicitimplicit}
P_{\mu^n}^{\mu^{n+1}} = I_d-\tau\nabla_\mu F(\mu^n) - \tau\sigma\nabla \log \frac{\mathrm{d}\mu^{n+1}}{\mathrm{d}\rho}\left(P_{\mu^n}^{\mu^{n+1}}\right), \quad \mu^{n}\text{-a.e.}
\end{equation}
Multiplying \eqref{eq:descent-H} by $\sigma$ and using Lemma \ref{ass:relative-smoothness} gives
\begin{multline}
\label{eq:descent}
\begin{aligned}
F^{\sigma}(\mu^{n+1}) &\leq F^{\sigma}(\mu^n) + \left\langle \nabla_\mu F(\mu^n) + \sigma\nabla \log \frac{\mathrm{d}\mu^{n+1}}{\mathrm{d}\rho}\left(P_{\mu^n}^{\mu^{n+1}}\right), P_{\mu^n}^{\mu^{n+1}} - I_d\right\rangle_{L_{\mu^n}^2(\mathbb R^d)}\\ 
&+ L_F'\left\|I_d-P_{\mu^n}^{\mu^{n+1}}\right\|^2_{L_{\mu^n}^2(\mathbb R^d)}\\
&= F^{\sigma}(\mu^n) -\frac{1}{\tau}\left\|I_d-P_{\mu^n}^{\mu^{n+1}}\right\|^2_{L_{\mu^n}^2(\mathbb R^d)} +L_F'\left\|I_d-P_{\mu^n}^{\mu^{n+1}}\right\|^2_{L_{\mu^n}^2(\mathbb R^d)}\\
&=F^{\sigma}(\mu^n) -\left(\frac{1}{\tau} - L_F'\right)\left\|I_d-P_{\mu^n}^{\mu^{n+1}}\right\|^2_{L_{\mu^n}^2(\mathbb R^d)}\\
&\leq F^{\sigma}(\mu^n),
\end{aligned}
\end{multline}
where the first equality follows from \eqref{eq:explicitimplicit} and the last inequality follows from $\tau < \frac{1}{L_F'}.$ 
\end{proof}

\section{Proof of Theorem \ref{thm:exp-conv}}\label{sec:convergence-proof}
\begin{proof}[Proof of Theorem \ref{thm:exp-conv} 1.]
First, note that by Theorem \ref{thm:optim} 1., the iterates $\left(\mu^n\right)_{n \in \mathbb N} \subset \mathfrak C.$ Since $\mu^{n+1} \in \mathfrak C.$ Then, by Theorem \ref{thm:optim} 1., the optimality condition for \eqref{eq:implicit-JKO} reads
\begin{equation*}
\nabla_\mu F(\mu^{n+1})(x) + \sigma \nabla \log \frac{\mathrm{d}\mu^{n+1}}{\mathrm{d}\rho}(x) = \frac{1}{\tau}\left(T_{\mu^{n+1}}^{\mu^n}(x) - x\right), \quad \text{for } \mu^{n+1}\text{-a.e. } x.
\end{equation*}
By \eqref{eq:from-F-to-Gibbs} with $\mu' = \mu = \mu^{n+1}$, we obtain
\begin{equation}
\label{eq:foc-prox-gibbs-implicit}
\nabla \log \frac{\mathrm{d}\mu^{n+1}}{\mathrm{d}\Phi_{\sigma}[\mu^{n+1}]}(x) = \frac{1}{\tau\sigma}\left(T_{\mu^{n+1}}^{\mu^n}(x) - x\right), \quad \text{for } \mu^{n+1}\text{-a.e. } x.
\end{equation}
Squaring both sides of \eqref{eq:foc-prox-gibbs-implicit} and integrating with respect to $\mu^{n+1}$ gives
\begin{equation}
\label{eq:fisher=wass-implicit}
I\left(\mu^{n+1}\big|\Phi_{\sigma}[\mu^{n+1}]\right) = \frac{1}{\tau^2 \sigma^2}\mathcal{W}_2^2(\mu^{n+1}, \mu^n).
\end{equation}
Recalling that \eqref{eq:mun+1} is
\begin{equation*}
F^{\sigma}(\mu^{n+1}) \leq F^{\sigma}(\mu^n) - \frac{1}{2\tau}\mathcal{W}_2^2(\mu^{n+1}, \mu^n),
\end{equation*}
and using \eqref{eq:fisher=wass-implicit} gives
\begin{equation*}
\begin{aligned}
& F^{\sigma}(\mu^{n+1}) 
 \leq F^{\sigma}(\mu^n) - \frac{1}{2\tau}\mathcal{W}_2^2(\mu^{n+1}, \mu^n) 
= F^{\sigma}(\mu^n) - \frac{\tau \sigma^2}{2} I\left(\mu^{n+1}\big|\Phi_{\sigma}[\mu^{n+1}]\right)\\ &\leq F^{\sigma}(\mu^n) -  \tau \sigma^2 C_{\sigma,F,\rho}\operatorname{KL}\left(\mu^{n+1}\big|\Phi_{\sigma}[\mu^{n+1}]\right) \leq F^{\sigma}(\mu^n) - \tau \sigma C_{\sigma,F,\rho}\left(F^{\sigma}(\mu^{n+1}) - F^{\sigma}(\mu_{\sigma}^*)\right),
\end{aligned}
\end{equation*}
where the second and third inequalities follow from \eqref{eq:sobolev-ineq} and Lemma \ref{lemma:sandwich}, respectively. Let $\kappa \coloneqq 1 + \tau \sigma C_{\sigma,F,\rho} > 1.$ Rearranging the inequality above gives
\begin{equation*}
F^{\sigma}(\mu^{n+1}) - F^{\sigma}(\mu_{\sigma}^*) \leq \kappa^{-1}\left(F^{\sigma}(\mu^{n}) - F^{\sigma}(\mu_{\sigma}^*)\right).
\end{equation*}
The convergence estimate follows by iterating over $n \in \mathbb N.$ 
\end{proof}
\begin{proof}[Proof of Theorem \ref{thm:exp-conv} 2.]
First, note that by Theorem \ref{thm:optim} 2., the iterates $\left(\mu^n\right)_{n \in \mathbb N} \subset \mathfrak C.$ From \eqref{eq:optim-semi-implicit} with $T_{\mu^n}^{\mu^{n+1}}(x)$ in place of $x,$ we obtain
\begin{multline*}
\begin{aligned}
&\frac{1}{\tau}\left(x-T_{\mu^n}^{\mu^{n+1}}(x)\right)=\nabla_\mu F(\mu^n)\left( T_{\mu^n}^{\mu^{n+1}}(x)\right) + \sigma \nabla\log \frac{\mathrm{d}\mu^{n+1}}{\mathrm{d}\rho}\left(T_{\mu^n}^{\mu^{n+1}}(x)\right)\\ &=  \nabla_\mu F(\mu^n)\left( T_{\mu^n}^{\mu^{n+1}}(x)\right) - \nabla_\mu F(\mu^{n+1})\left( T_{\mu^n}^{\mu^{n+1}}(x)\right) + \nabla_\mu F(\mu^{n+1})\left( T_{\mu^n}^{\mu^{n+1}}(x)\right)\\
& \qquad  + \sigma \nabla\log \frac{\mathrm{d}\mu^{n+1}}{\mathrm{d}\rho}\left(T_{\mu^n}^{\mu^{n+1}}(x)\right)\\
&= \nabla_\mu F(\mu^n)\left( T_{\mu^n}^{\mu^{n+1}}(x)\right) - \nabla_\mu F(\mu^{n+1})\left( T_{\mu^n}^{\mu^{n+1}}(x)\right) + \sigma \nabla \log \frac{\mathrm{d}\mu^{n+1}}{\mathrm{d}\Phi_{\sigma}[\mu^{n+1}]}\left(T_{\mu^n}^{\mu^{n+1}}(x)\right).
\end{aligned}
\end{multline*}
Recalling that \eqref{eq:fisher=wass} is
\begin{equation*}
    I\left(\mu^{n+1}\big|\Phi_{\sigma}[\mu^n]\right) = \frac{1}{\tau^2 \sigma^2}\mathcal{W}_2^2(\mu^{n+1}, \mu^n),
\end{equation*}
by Minkowski's inequality, we obtain
\begin{multline*}
\begin{aligned}
&\sigma^2 I\left(\mu^{n+1}\big|\Phi_{\sigma}[\mu^{n+1}]\right) = \sigma^2\left\|\nabla \log \frac{\mathrm{d}\mu^{n+1}}{\mathrm{d}\Phi_{\sigma}[\mu^{n+1}]}\left(T_{\mu^n}^{\mu^{n+1}}\right)\right\|^2_{L_{\mu^n}^2(\mathbb R^d)}\\ &\leq 2\left(\left\|\nabla_\mu F(\mu^{n+1})\left( T_{\mu^n}^{\mu^{n+1}}\right) - \nabla_\mu F(\mu^n)\left( T_{\mu^n}^{\mu^{n+1}}\right)\right\|^2_{L_{\mu^n}^2(\mathbb R^d)} + \frac{1}{\tau^2}\left\|I_d-T_{\mu^n}^{\mu^{n+1}}\right\|^2_{L_{\mu^n}^2(\mathbb R^d)}\right)\\
&\leq 2(L_F')^2 \mathcal{W}_2^2(\mu^{n+1}, \mu^n) + \frac{2}{\tau^2} \mathcal{W}_2^2(\mu^{n+1}, \mu^n)\\
&= 2\left((L_F')^2 + \frac{1}{\tau^2}\right)\tau^2\sigma^2I\left(\mu^{n+1}\big|\Phi_{\sigma}[\mu^n]\right),
\end{aligned}
\end{multline*}
where the second inequality follows from Assumption \ref{ass:lip-intrinsic} and Corollary \ref{corollary:wass-transport}. 
Hence,
\begin{equation*}
I\left(\mu^{n+1}\big|\Phi_{\sigma}[\mu^{n+1}]\right) \leq 2\left(1 + \tau^2 (L_F')^2\right)I\left(\mu^{n+1}\big|\Phi_{\sigma}[\mu^n]\right).
\end{equation*}
Since $\tau < \frac{1}{2L_F'},$ using this inequality in \eqref{eq:dissipation-proximal} gives
\begin{multline*}
\begin{aligned}
F^{\sigma}(\mu^{n+1}) &\leq F^{\sigma}(\mu^{n}) - \frac{\tau \sigma^2 \left(1 - 2\tau L_F'\right)}{2\left(1 + (\tau L_F')^2\right)}I\left(\mu^{n+1}\big|\Phi_{\sigma}[\mu^{n+1}]\right)\\ &\leq F^{\sigma}(\mu^{n}) - \frac{\tau \sigma^2 \left(1 - 2\tau L_F'\right)}{1 +  (\tau L_F')^2} C_{\sigma,F,\rho}\operatorname{KL}\left(\mu^{n+1}\big|\Phi_{\sigma}[\mu^{n+1}]\right)\\
&\leq  F^{\sigma}(\mu^{n}) - \frac{\tau \sigma \left(1 - 2\tau L_F'\right)}{1 +  (\tau L_F')^2} C_{\sigma,F,\rho}\left(F^{\sigma}(\mu^{n+1}) - F^{\sigma}(\mu_\sigma^*)\right),
\end{aligned}
\end{multline*}
where the second and third inequalities follow from \eqref{eq:sobolev-ineq} and Lemma \ref{lemma:sandwich}, respectively. Let $\kappa \coloneqq 1 + \frac{\tau \sigma \left(1 - 2\tau L_F'\right)}{1 + (\tau L_F')^2}C_{\sigma,F,\rho} > 1.$ Then rearranging the inequality above gives
\begin{equation*}
F^{\sigma}(\mu^{n+1}) - F^{\sigma}(\mu_{\sigma}^*) \leq \kappa^{-1}\left(F^{\sigma}(\mu^{n}) - F^{\sigma}(\mu_{\sigma}^*)\right).
\end{equation*}
The convergence estimate follows by iterating over $n \in \mathbb N.$ 
\end{proof}
\begin{proof}[Proof of Theorem \ref{thm:exp-conv} 3.]
Recall that the map $P_{\mu^n}^{\mu^{n+1}}$ is defined by $P_{\mu^n}^{\mu^{n+1}} \coloneqq T_{\nu^{n+1}}^{\mu^{n+1}} \circ T_{\mu^n}^{\nu^{n+1}}$. Note that, for each $n \in \mathbb N,$ $\gamma^n \coloneqq \left(I_d, P_{\mu^n}^{\mu^{n+1}}\right)_{\#} \mu^n$ is a coupling between $\mu^n$ and $\mu^{n+1}.$ Then, since ${P_{\mu^n}^{\mu^{n+1}}}_{\#} \mu^n = \mu^{n+1},$ we obtain
\begin{multline}
\label{eq:fisher-suboptimal-wass}
\begin{aligned}
\sigma^2 &I\left(\mu^{n+1}\big|\Phi_{\sigma}[\mu^{n+1}]\right) = \sigma^2\left\|\nabla \log \frac{\mathrm{d}\mu^{n+1}}{\mathrm{d}\Phi_{\sigma}[\mu^{n+1}]}\left(P_{\mu^n}^{\mu^{n+1}}\right)\right\|^2_{L_{\mu^n}^2(\mathbb R^d)}\\ &= \left\|\nabla_\mu F(\mu^{n+1})\left(P_{\mu^n}^{\mu^{n+1}}\right) + \sigma\nabla\log \frac{\mathrm{d}\mu^{n+1}}{\mathrm{d}\rho}\left(P_{\mu^n}^{\mu^{n+1}}\right)\right\|^2_{L_{\mu^n}^2(\mathbb R^d)} \\
&\leq 2\left(\left\|\nabla_\mu F(\mu^{n+1})\left(P_{\mu^n}^{\mu^{n+1}}\right) - \nabla_\mu F(\mu^n)\right\|^2_{L_{\mu^n}^2(\mathbb R^d)} + \frac{1}{\tau^2}\left\|I_d-P_{\mu^n}^{\mu^{n+1}}\right\|^2_{L_{\mu^n}^2(\mathbb R^d)}\right)\\
&\leq 2\left(2(L_F')^2\mathcal{W}_2^2(\mu^{n+1}, \mu^n) + 2(L_F')^2\left\|I_d-P_{\mu^n}^{\mu^{n+1}}\right\|^2_{L_{\mu^n}^2(\mathbb R^d)} + \frac{1}{\tau^2}\left\|I_d-P_{\mu^n}^{\mu^{n+1}}\right\|^2_{L_{\mu^n}^2(\mathbb R^d)}\right)\\
&\leq 2\left(\frac{1}{\tau^2} + 4(L_F')^2\right)\left\|I_d-P_{\mu^n}^{\mu^{n+1}}\right\|^2_{L_{\mu^n}^2(\mathbb R^d)},
\end{aligned}
\end{multline}
where the second equality follows from \eqref{eq:from-F-to-Gibbs} with $\mu'=\mu=\mu^{n+1},$ the first inequality follows from \eqref{eq:explicitimplicit} and Minkowski's inequality, and the last two inequalities follow from Assumption \ref{ass:lip-intrinsic} and the definition of the Wasserstein distance (cf. \eqref{eq:wass}), respectively.

Hence, by \eqref{eq:descent},
\begin{multline*}
\begin{aligned}
F^{\sigma}(\mu^{n+1}) & \leq F^{\sigma}(\mu^n) -\left(\frac{1}{\tau} - L_F'\right)\left\|I_d-P_{\mu^n}^{\mu^{n+1}}\right\|^2_{L_{\mu^n}^2(\mathbb R^d)}\\
&\leq F^{\sigma}(\mu^n) -\left(\frac{1}{\tau} - L_F'\right)\frac{\sigma^2}{2\left(\frac{1}{\tau^2} + 4(L_F')^2\right)}I\left(\mu^{n+1}\big|\Phi_{\sigma}[\mu^{n+1}]\right)\\
&= F^{\sigma}(\mu^n) -\left(1 - \tau L_F'\right)\frac{\tau\sigma^2}{2\left(1 + 4(\tau L_F')^2\right)}I\left(\mu^{n+1}\big|\Phi_{\sigma}[\mu^{n+1}]\right)\\
&\leq F^{\sigma}(\mu^n) -\left(1 - \tau L_F'\right)\frac{\tau \sigma^2}{1 + 4(\tau L_F')^2}C_{\sigma,F,\rho}\operatorname{KL}\left(\mu^{n+1}\big|\Phi_{\sigma}[\mu^{n+1}]\right),
\end{aligned}
\end{multline*}
where the second inequality follows from \eqref{eq:fisher-suboptimal-wass} and the fact that $\tau < \frac{1}{L_F'},$ whereas the last inequality follows from \eqref{eq:sobolev-ineq}. Let 
\[
\kappa \coloneqq 1 + \left(1 - \tau L_F'\right)\frac{\tau\sigma}{1 + 4(\tau L_F')^2}C_{\sigma,F,\rho} >1\,.
\]
Then using Lemma \ref{lemma:sandwich} in the previous inequality gives
\begin{equation*}
F^{\sigma}(\mu^{n+1}) - F^{\sigma}(\mu_\sigma^*) \leq \kappa^{-1}\left(F^{\sigma}(\mu^{n}) - F^{\sigma}(\mu_\sigma^*)\right).
\end{equation*}
The convergence estimate follows by iterating over $n \in \mathbb N.$ 
\end{proof}
\section{Proximal point scheme}
\label{app:implicit-scheme}
In this section, we present the proofs of the parts of Theorems \ref{thm:well-posedness} and \ref{thm:optim} related to the proximal point scheme \eqref{eq:implicit-JKO}. We start by proving that \eqref{eq:implicit-JKO} admits a unique minimizer.

The proof is a modification of the argument in the proof of \cite[Proposition 4.1]{jko}, and before we present it we give an outline of the main steps:

\emph{Step 1.} Firstly, given $\mu^0 \in \mathcal{P}_2^{\rho}(\mathbb R^d),$ we show that
\begin{equation}
\label{eq:map-to-minimize-implicit}
\mathcal{P}_2^{\rho}(\mathbb R^d) \ni \mu \mapsto \mathcal{F}(\mu) \coloneqq F^{\sigma}(\mu) + \frac{1}{2\tau}\mathcal{W}_2^2(\mu, \mu^0).
\end{equation}
is bounded below on $\mathcal{P}_2^{\rho}(\mathbb R^d).$

\emph{Step 2.} Secondly, we show that any minimizing sequence  $\left(\mu_k\right)_{k \in \mathbb N} \subset \mathcal{P}_2^{\rho}(\mathbb R^d)$ of $\mathcal{F}$ contains a weakly convergent subsequence in $L_{\rho}^1(\mathbb R^d).$

\emph{Step 3.} Thirdly, we show that the weak limit of the converging subsequence is indeed a minimizer $\mu^1 \in \mathcal{P}_2^{\rho}(\mathbb R^d)$ of $\mathcal{F}.$

\emph{Step 4.} Finally, we deduce the uniqueness of the minimizer from the strict convexity of $\operatorname{KL}(\cdot|\rho).$

\begin{proof}[Proof of Theorem \ref{thm:well-posedness} 1.]
\emph{Step 1.} 
By Jensen's inequality, since the map $z \mapsto z \log z$ is convex on $[0, \infty),$ it follows that
\begin{equation}
\label{eq:KL-nonneg}
\operatorname{KL}(\mu|\rho) \geq 0, \quad \text{ for all } \mu \in \mathcal{P}_2^{\rho}(\mathbb R^d).
\end{equation} 
Since $\frac{1}{2\tau}\mathcal{W}_2^2(\cdot, \mu^0) \geq 0$ and since $F$ is bounded below on $\mathcal{P}_2^{\rho}(\mathbb R^d),$ it follows that $\mathcal{F}$ is bounded below on $\mathcal{P}_2^{\rho}(\mathbb R^d),$ and thus $\inf_{\mu \in \mathcal{P}_2^{\rho}(\mathbb R^d)} \mathcal{F}(\mu) > - \infty.$ 

\emph{Step 2.} Let $\left(\mu_k\right)_{k \in \mathbb N} \subset \mathcal{P}_2^{\rho}(\mathbb R^d)$ be a minimizing sequence for $\mathcal{F},$ i.e., $\lim_{k \to \infty} \mathcal{F}(\mu_k) = \inf_{\mu \in \mathcal{P}_2^{\rho}(\mathbb R^d)} \mathcal{F}(\mu).$ Then the sequence $\left(\mathcal{F}(\mu_k)\right)_k$ is bounded on $\mathcal{P}_2^{\rho}(\mathbb R^d),$ i.e., there exists $M_\mathcal{F} > 0$ such that $\left|\mathcal{F}(\mu_k)\right| \leq M_\mathcal{F},$ for all $k \in \mathbb N.$ 

Thus, since $\frac{1}{2\tau}\mathcal{W}_2^2(\cdot, \mu^0) \geq 0,$ it follows that
\begin{equation*}
\operatorname{KL}(\mu_k|\rho) \leq \frac{1}{\sigma}\left(M_\mathcal{F} - F(\mu_k)\right) < \frac{1}{\sigma}\left(M_\mathcal{F} - \inf_{\mu \in \mathcal{P}_2^{\rho}(\mathbb R^d)} F(\mu)\right) < \infty,
\end{equation*}
and together with \eqref{eq:KL-nonneg},
\begin{equation}
\label{eq:KL-bdd}
\left(\operatorname{KL}(\mu_k|\rho)\right)_k \text{ is bounded.}
\end{equation}
From the inequality $|y|^2 \leq 2|x|^2 + 2|x-y|^2,$ which holds for all $x,y \in \mathbb R^d,$ and \eqref{eq:wass}, it follows that
\begin{equation}
\label{eq:inequality-wasserstein}
\int_{\mathbb R^d} |y|^2 \mu''(\mathrm{d}y) \leq \int_{\mathbb R^d} |x|^2 \mu'(\mathrm{d}x) + 2\mathcal{W}_2^2(\mu', \mu''), \quad \text{ for all } \mu', \mu'' \in \mathcal{P}_2^{\rho}(\mathbb R^d),
\end{equation}
Again using \eqref{eq:KL-nonneg} we obtain
\begin{multline*}
M_\mathcal{F} \geq \mathcal{F}(\mu_k) \geq F(\mu_k) + \frac{1}{2\tau}\mathcal{W}_2^2(\mu_k, \mu^0) \geq F(\mu_k) + \frac{1}{4\tau}\int_{\mathbb R^d} |x|^2 \mu_k(\mathrm{d}x) - \frac{1}{2\tau}\int_{\mathbb R^d} |x|^2 \mu^0(\mathrm{d}x).
\end{multline*}
Therefore,
\begin{multline*}
\begin{aligned}
\int_{\mathbb R^d} |x|^2 \mu_k(\mathrm{d}x) &\leq 4\tau\left(M_\mathcal{F} - F(\mu_k)\right) + 2\int_{\mathbb R^d} |x|^2 \mu^0(\mathrm{d}x)\\ &\leq 4\tau\left(M_\mathcal{F} - \inf_{\mu \in \mathcal{P}_2^{\rho}(\mathbb R^d)} F(\mu)\right) + 2\int_{\mathbb R^d} |x|^2 \mu^0(\mathrm{d}x) < \infty,
\end{aligned}
\end{multline*}
hence
\begin{equation}
\label{eq:second-mom-bdd}
\left(\int_{\mathbb R^d} |x|^2 \mu_k(\mathrm{d}x)\right)_k \text{ is bounded.}
\end{equation}
Note that there exists $C > 0$ such that
\begin{equation*}
\left|\min\left\{z \log z, 0\right\}\right| \leq C, \quad \text{ for all } z \geq 0.
\end{equation*}
Hence, we obtain
\begin{equation}
\label{eq:min-bdd}
\int_{\mathbb R^d} \left|\min\left\{\frac{\mathrm{d}\mu}{\mathrm{d}\rho}(x)\log\frac{\mathrm{d}\mu}{\mathrm{d}\rho}(x), 0\right\}\right| \rho(\mathrm{d}x) \leq C\int_{\mathbb R^d} \rho(\mathrm{d}x) = C.
\end{equation}
Furthermore, from \eqref{eq:min-bdd}, we obtain
\begin{equation}
\label{eq:min-seq-bdd}
\left(\int_{\mathbb R^d} \left|\min\left\{\frac{\mathrm{d}\mu_k}{\mathrm{d}\rho}(x)\log\frac{\mathrm{d}\mu_k}{\mathrm{d}\rho}(x), 0\right\}\right| \rho(\mathrm{d}x) \right)_k \text{ is bounded.}
\end{equation}
Since $\max\{z \log z, 0\} = z \log z + |\min\{z \log z,0\}|,$ for all $z \geq 0,$ it follows from \eqref{eq:KL-bdd} and \eqref{eq:min-seq-bdd} that
\begin{equation*}
\left(\int_{\mathbb R^d} \max\left\{\frac{\mathrm{d}\mu_k}{\mathrm{d}\rho}(x)\log\frac{\mathrm{d}\mu_k}{\mathrm{d}\rho}(x), 0\right\} \rho(\mathrm{d}x) \right)_k \text{ is bounded.}
\end{equation*}
Since $\|\frac{\mathrm{d}\mu_k}{\mathrm{d}\rho}\|_{L_{\rho}^1(\mathbb R^d)} = 1,$ for all $k \in \mathbb N,$ we obtain that $\left(\frac{\mathrm{d}\mu_k}{\mathrm{d}\rho}\right)_k$ is uniformly bounded in $L_{\rho}^1(\mathbb R^d).$ As $[0, \infty) \ni z \mapsto \max\{z \log z, 0\}$ is non-negative, increasing and has superlinear growth together with \eqref{eq:KL-bdd} implies via \cite[Theorem 4.5.9]{bogachev2007measure} that $\left(\frac{\mathrm{d}\mu_k}{\mathrm{d}\rho}\right)_k$ is uniformly integrable. Consequently, according to the Dunford-Pettis theorem (see \cite[Corollary 4.7.19]{bogachev2007measure}), there exists $\mu^* \in \mathcal{P}^{\rho}(\mathbb R^d)$ such that (at least for a subsequence)
\begin{equation}
\label{eq:weak-conv}
\frac{\mathrm{d}\mu_k}{\mathrm{d}\rho} \to \frac{\mathrm{d}\mu^*}{\mathrm{d}\rho} \text{ weakly in } L_{\rho}^1(\mathbb R^d) \text{ as } k \to \infty, 
\end{equation}
i.e.,
\begin{equation*}
\int_{\mathbb R^d} h(x) \frac{\mathrm{d}\mu_k}{\mathrm{d}\rho}(x) \rho(\mathrm{d}x) \to \int_{\mathbb R^d} h(x) \frac{\mathrm{d}\mu^*}{\mathrm{d}\rho}(x) \rho(\mathrm{d}x) \text{ as } k \to \infty,
\end{equation*}
for all $h \in L_{\rho}^{\infty}(\mathbb R^d).$

\emph{Step 3.} Observe that any continuous bounded function $g:\mathbb R^d \to \mathbb R$ is in $L_{\rho}^{\infty}(\mathbb R^d),$ and hence, by \eqref{eq:weak-conv}, we obtain as $k \to \infty,$
\begin{multline*}
\int_{\mathbb R^d} g(x) \mu_k(\mathrm{d}x) = \int_{\mathbb R^d} g(x) \frac{\mathrm{d}\mu_k}{\mathrm{d}\rho}(x)\rho(\mathrm{d}x) \to \int_{\mathbb R^d} g(x) \frac{\mathrm{d}\mu^*}{\mathrm{d}\rho}(x)\rho(\mathrm{d}x) = \int_{\mathbb R^d} g(x) \mu^*(\mathrm{d}x),
\end{multline*} 
i.e., $\mu_k \to \mu^*$ weakly (with respect to the topology of probability measures convergence) as $k \to \infty.$ Hence, by Fatou's lemma for weak convergence of measure (see e.g. \cite[Lemma 13]{zhenjiefict}) and \eqref{eq:second-mom-bdd},
\begin{equation*}
    \int_{\mathbb{R}^d}|x|^2\mu^*(\mathrm{d}x)\leq \liminf_{k \to \infty} \int_{\mathbb{R}^d}|x|^2\mu_k(\mathrm{d}x) < \infty,
\end{equation*}
which shows that $\mu^*$ has finite second moment and hence we can conclude that $\mu^* \in \mathcal{P}_2^{\rho}(\mathbb R^d)$.

Since both $F$ and $\operatorname{KL}(\cdot|\rho)$ are lower semi-continuous with respect to weak convergence of probability measures, it follows that
\begin{equation*}
  F(\mu^*) \leq  \liminf_{k \to \infty }F(\mu_k) \quad \text{and} \quad 
\operatorname{KL}(\mu^*|\rho) \leq \liminf_{k \to \infty} \operatorname{KL}(\mu_k|\rho).
\end{equation*}
By \eqref{eq:second-mom-bdd} and continuity of $\mathcal{W}_2$ (see \cite[Corollary 6.11]{villani2008optimal}), we have
\begin{equation*}
\mathcal{W}_2^2(\mu^0, \mu^*) = \lim_{k \to \infty} \mathcal{W}_2^2(\mu^0, \mu_k).
\end{equation*}
Putting everything together, we obtain
\begin{equation*}
\mathcal{F}(\mu^*) \leq \liminf_{k \to \infty} \mathcal{F}(\mu_k) = \inf_{\mu \in \mathcal{P}_2^{\rho}(\mathbb R^d)} \mathcal{F}(\mu).
\end{equation*}
On the other hand, from the definition of infimum, we have
\begin{equation*}
\mathcal{F}(\mu^*) \geq \inf_{\mu \in \mathcal{P}_2^{\rho}(\mathbb R^d)} \mathcal{F}(\mu).
\end{equation*}
Hence, $\mathcal{F}(\mu^*) = \inf_{\mu \in \mathcal{P}_2^{\rho}(\mathbb R^d)} \mathcal{F}(\mu),$ and therefore $\mu^* \in \mathcal{P}_2^{\rho}(\mathbb R^d)$ is a minimizer of $\mathcal{F},$ which we shall denote by $\mu^{1}.$

\emph{Step 4.} The uniqueness of the minimizer of $\mathcal{F}$ follows from Assumption \ref{ass:convexity}, convexity of $\mathcal{P}_2^{\rho}(\mathbb R^d) \ni \mu \mapsto \mathcal{W}_2^2(\mu, \mu^0),$ and strict convexity of $\mathcal{P}_2^{\rho}(\mathbb R^d) \ni \mu \mapsto \operatorname{KL}(\mu|\rho).$

Therefore, it follows inductively that, for each $n \in \mathbb N,$ given $\mu^n \in \mathcal{P}_2^{\rho}(\mathbb R^d),$ the scheme \eqref{eq:implicit-JKO} admits a unique minimizer $\mu^{n+1} \in \mathcal{P}_2^{\rho}(\mathbb R^d).$ Hence, if $\mu^0 \in \mathcal{P}_2^{\rho}(\mathbb R^d),$ then $\left(\mu^n\right)_{n \in \mathbb N} \subset \mathcal{P}_2^{\rho}(\mathbb R^d)$ along the scheme \eqref{eq:implicit-JKO}. 
\end{proof}
Therefore, via Theorem \ref{thm:existence-optimal-coupling}, we obtain 
\begin{corollary}[Existence of optimal transport maps along \eqref{eq:implicit-JKO}]
\label{corollary:existence-optimal-transport-implicit}
Let $\nu \in \mathcal{P}_2(\mathbb R^d)$ and $\rho \in \mathcal{P}_2^{\lambda}(\mathbb R^d).$ If $F$ is weakly lower semi-continuous and convex in the sense of \eqref{eq:convexity_classical}, then given $\mu^0 \in \mathcal{P}_2^{\rho}(\mathbb R^d),$ there exists a unique $\mu^n$-a.e. optimal transport map $T_{\mu^n}^{\nu}:\mathbb R^d \to \mathbb R^d$ from $\mu^n$ to $\nu.$ In particular, if $\nu = \mu^{n+1},$ there also exists a unique $\mu^{n+1}$-a.e. optimal transport map $T_{\mu^{n+1}}^{\mu^n}:\mathbb R^d \to \mathbb R^d$ such that $T_{\mu^{n+1}}^{\mu^n} \circ T_{\mu^n}^{\mu^{n+1}} = I_d,$ $\mu^n$-a.e and $T_{\mu^n}^{\mu^{n+1}} \circ T_{\mu^{n+1}}^{\mu^n} = I_d,$ $\mu^{n+1}$-a.e.
\end{corollary}

We make the following remark on the assumptions in Theorem \ref{thm:well-posedness} 1.

\begin{remark}\label{remark:weak_lower_semi_continuity}
Note that Assumption \ref{ass:convexity} implies weak lower semi-continuity of $F$. 
Indeed, by Definition \ref{def:flat-derivative}, the flat derivative $\frac{\delta F}{\delta \mu}$ is continuous and has at most quadratic growth uniformly in $\mu.$ Hence, by \cite[Definition 6.8 (iv)]{villani2008optimal}, the weak convergence $\mu_k \to \mu^*$ as $k \to \infty$ implies that
\begin{equation*}
   \lim_{k \to \infty} \int_{\mathbb R^d} \frac{\delta F}{\delta \mu} (\mu^*, x)\mu_k(\mathrm{d}x) = \int_{\mathbb R^d} \frac{\delta F}{\delta \mu} (\mu^*, x)\mu^*(\mathrm{d}x).
\end{equation*}
Using \eqref{eq:convexity_via_flat_derivative}, which holds due to \cite[Lemma 4.1]{10.1214/20-AIHP1140}, we get
\begin{equation*}
    F(\mu_k)-F(\mu^*) \geq \int_{\mathbb R^d} \frac{\delta F}{\delta \mu} (\mu^*, x)(\mu_k-\mu^*)(\mathrm{d}x)
\end{equation*}
and passing to the limit $k \to \infty$, we obtain
\begin{equation*}
    \liminf_{k \to \infty }F(\mu_k) \geq F(\mu^*),
\end{equation*}
which shows the weak lower semi-continuity of $F$.
\end{remark}

Next, we present the proof of Theorem \ref{thm:optim} 1., which is done in two steps:

\emph{Step 1.} We show by induction that, for each $n \in \mathbb N,$ given $\mu^n \in \mathcal{P}_2^{\rho}(\mathbb R^d),$ the unique minimizer $\mu^{n+1}$ of \eqref{eq:implicit-JKO} belongs to $\mathfrak C.$

\emph{Step 2.} We show that $\mu^{n+1} \in \mathfrak C$ satisfies a first-order optimality condition.
\begin{proof}[Proof of Theorem \ref{thm:optim} 1.]
\emph{Step 1.} Given $\mu^0 \in \mathcal{P}_2^{\rho}(\mathbb R^d),$ Theorem \ref{thm:well-posedness} 1. guarantees the existence and uniqueness of a minimizer $\mu^1 \in \mathcal{P}_2^{\rho}(\mathbb R^d)$ for \eqref{eq:map-to-minimize-implicit}. We will now prove that $\mu^1 \in \mathfrak C.$ By Definition \ref{def:slope}, the metric slope of the relative entropy $\operatorname{KL}(\cdot|\rho)$ at $\mu \in \mathcal{P}_2^{\rho}(\mathbb R^d)$ is defined by
\begin{equation*}
|\mathfrak d \operatorname{KL}(\cdot|\rho)|(\mu) = \limsup_{\nu \to \mu} \frac{\left(\operatorname{KL}(\mu|\rho)-\operatorname{KL}(\nu|\rho)\right)_{+}}{\mathcal{W}_2(\nu, \mu)}.
\end{equation*}
Since $\mu^1 \in \mathcal{P}_2^{\rho}(\mathbb R^d)$ is a minimizer for \eqref{eq:map-to-minimize-implicit}, it follows that for any $\mu \in \mathcal{P}_2^{\rho}(\mathbb R^d),$
\begin{multline}
\label{eq:slope-estimate-implicit}
\begin{aligned}
&\operatorname{KL}(\mu^1|\rho) - \operatorname{KL}(\mu|\rho)
\leq \frac{1}{\sigma}\left(F(\mu)-F(\mu^1)\right) + \frac{1}{2\tau\sigma}\left(\mathcal{W}_2^2(\mu, \mu^0) - \mathcal{W}_2^2(\mu^1, \mu^0)\right)\\
&= \frac{1}{\sigma}\left(F(\mu)-F(\mu^1)\right) + \frac{1}{2\tau\sigma}\left(\mathcal{W}_2(\mu, \mu^0) - \mathcal{W}_2(\mu^1, \mu^0)\right)\left(\mathcal{W}_2(\mu, \mu^0) + \mathcal{W}_2(\mu^1, \mu^0)\right)\\
&\leq \frac{1}{\sigma}\left(F(\mu)-F(\mu^1)\right) + \frac{1}{2\tau\sigma}\mathcal{W}_2(\mu, \mu^1)\left(\mathcal{W}_2(\mu, \mu^0) + \mathcal{W}_2(\mu^1, \mu^0)\right),
\end{aligned}
\end{multline}
where the last inequality follows from the triangle inequality applied to $\mathcal{W}_2.$ 

By Assumption \ref{assumption:L-derivative}, $F$ is Wasserstein differentiable at any $\mu \in \mathcal{P}_2(\mathbb R^d).$ Since $\mu^1,\mu \in \mathcal{P}_2^{\lambda}(\mathbb R^d)$ due to $\rho \ll \lambda,$ it follows that there exists a unique optimal coupling $\gamma = \left(I_d,T_{\mu^1}^\mu\right)_\#\mu^1 \in \Gamma_o(\mu^1, \mu).$ Then, using Definition \ref{def:wass-differentiability}, we obtain
\begin{align*}
    F(\mu)-F(\mu^1) &= \int_{\mathbb R^d} \langle \nabla_\mu F(\mu^1)(x), T_{\mu^1}^\mu(x)-x\rangle\ \mu^1(\mathrm{d}x) + o\left(\mathcal{W}_2(\mu^1,\mu)\right)\\
    &\leq \left\|\nabla_\mu F(\mu^1)\right\|_{L_{\mu^1}^2(\mathbb R^d)}\left\|T_{\mu^1}^\mu-I_d\right\|_{L_{\mu^1}^2(\mathbb R^d)} + o\left(\mathcal{W}_2(\mu^1,\mu)\right)\\
    &=\left\|\nabla_\mu F(\mu^1)\right\|_{L_{\mu^1}^2(\mathbb R^d)}\mathcal{W}_2(\mu^1,\mu) + o\left(\mathcal{W}_2(\mu^1,\mu)\right),
\end{align*}
where the last equality follows from optimality of $\gamma.$

Therefore, for any $\mu \neq \mu^1,$ dividing \eqref{eq:slope-estimate-implicit} by $\mathcal{W}_2(\mu,\mu^1)$ gives
\begin{align*}
\frac{\left(\operatorname{KL}(\mu^1|\rho) - \operatorname{KL}(\mu|\rho)\right)_+}{\mathcal{W}_2(\mu,\mu^1)} &\leq \frac{1}{\sigma}\left(\left\|\nabla_\mu F(\mu^1)\right\|_{L_{\mu^1}^2(\mathbb R^d)} + \frac{o\left(\mathcal{W}_2(\mu,\mu^1)\right)}{\mathcal{W}_2(\mu,\mu^1)}\right)\\ 
&+ \frac{1}{2\tau\sigma}\left(\mathcal{W}_2(\mu, \mu^0) + \mathcal{W}_2(\mu^1, \mu^0)\right).
\end{align*}
Note that $\left\|\nabla_\mu F(\mu^1)\right\|_{L_{\mu^1}^2(\mathbb R^d)} < \infty$ since $\nabla_\mu F(\mu^1)$ grows at most linearly. Taking the limsup as $\mu \to \mu^1$ in the sense of weak convergence, applying the continuity of $\mathcal{W}_2$ via \cite[Corollary 6.11]{villani2008optimal}, and using $\lim_{h \to 0}\frac{o(h)}{h} = 0,$ we obtain
\begin{equation*}
|\mathfrak d \operatorname{KL}(\cdot|\rho)|(\mu^1) \leq \frac{1}{\sigma}\left\|\nabla_\mu F(\mu^1)\right\|_{L_{\mu^1}^2(\mathbb R^d)} + \frac{1}{\tau\sigma}\mathcal{W}_2(\mu^1, \mu^0) < \infty.
\end{equation*}
Hence, using Theorem \ref{thm:subdiff-entropy}, it follows that $\frac{\mathrm{d}\mu^1}{\mathrm{d}\rho} \in W_{\lambda,\text{loc}}^{1,1}(\mathbb R^d),$ $\frac{\left|\nabla\frac{\mathrm{d}\mu^1}{\mathrm{d}\rho}\right|^2}{\frac{\mathrm{d}\mu^1}{\mathrm{d}\rho}} \in L_{\rho}^1(\mathbb R^d),$ and $|\mathfrak d \operatorname{KL}(\cdot|\rho)|^2(\mu^1) = I(\mu_1|\rho).$ Since $\frac{\left|\nabla\frac{\mathrm{d}\mu^1}{\mathrm{d}\rho}\right|^2}{\frac{\mathrm{d}\mu^1}{\mathrm{d}\rho}} \in L_{\rho}^1(\mathbb R^d)$ we have $\nabla \sqrt \frac{\mathrm{d}\mu^1}{\mathrm{d}\rho} \in L_{\rho}^{2}(\mathbb R^d).$ In addition, $\mu^1 \in \mathcal{P}_2^{\rho}(\mathbb R^d)$ implies $\sqrt \frac{\mathrm{d}\mu^1}{\mathrm{d}\rho} \in L_{\rho}^{2}(\mathbb R^d),$ and therefore we obtain that $\mu^1 \in \mathfrak C.$

Therefore, it follows inductively that, for each $n \in \mathbb N,$ given $\mu^n \in \mathcal{P}_2^{\rho}(\mathbb R^d),$ the unique minimizer $\mu^{n+1}$ of \eqref{eq:implicit-JKO} belongs to $\mathfrak C.$ Hence, if $\mu^0 \in \mathcal{P}_2^{\rho}(\mathbb R^d),$ then $\left(\mu^n\right)_{n \in \mathbb N} \subset \mathfrak C$ along the scheme \eqref{eq:implicit-JKO}.

\emph{Step 2.} Since $\operatorname{KL}(\cdot|\rho)$ admits a unique Wasserstein sub-differential and $F$ is Wasserstein differentiable by Assumption \ref{assumption:L-derivative}, \cite[Lemma 10.1.2]{ambrosio2008gradient} allows us to write the first-order optimality condition for \eqref{eq:implicit-JKO}.

Indeed, since $F$ and $\operatorname{KL}(\cdot|\rho)$ are weakly lower semi-continuous, it follows that $F^{\sigma}(\mu) = F(\mu) + \sigma \operatorname{KL}(\mu|\rho)$ is also weakly lower semi-continuous, with $F^{\sigma}(\mu) < +\infty,$ for all $\mu \in \mathcal{P}_2^{\rho}(\mathbb R^d).$ 

By Theorem \ref{thm:well-posedness} 1., \eqref{eq:implicit-JKO} admits a unique minimizer $\mu^{n+1} \in \mathcal{P}_2^{\rho}(\mathbb R^d).$ Moreover, since $\rho \ll \lambda,$ we have $\mu^{n+1} \in \mathcal{P}_2^{\lambda}(\mathbb R^d).$ By Corollary \ref{corollary:existence-optimal-transport-implicit}, there exists a unique $\mu^{n+1}$-a.e. optimal transport map $T_{\mu^{n+1}}^{\mu^n}:\mathbb R^d \to \mathbb R^d$ from $\mu^{n+1}$ to $\mu^n.$ Therefore, by \cite[Lemma 10.1.2]{ambrosio2008gradient}, $\left|\mathfrak d F^{\sigma}\right|(\mu^{n+1}) <  +\infty$ and 
\begin{equation*}
\frac{1}{\tau}\left(T_{\mu^{n+1}}^{\mu^n} - I\right) \in \partial^- F^{\sigma}(\mu^{n+1}),
\end{equation*}
where $\partial^-$ denotes the Wasserstein sub-differential (cf. Definition \ref{def:wass-sub-and-sup}). By Assumption \ref{assumption:L-derivative} and using the fact that $\mu^{n+1} \in \mathfrak C$ together with Theorem \ref{thm:subdiff-entropy}, we obtain 
\begin{equation*}
\partial^- F^{\sigma}(\mu^{n+1}) = \left\{\nabla_\mu F(\mu^{n+1}) + \sigma \nabla \log \frac{\mathrm{d}\mu^{n+1}}{\mathrm{d}\rho}\right\},
\end{equation*}
and hence the conclusion follows.
\end{proof}
\section{Prox-linear scheme}
\label{app:semi-implicit-scheme}
In this section, we present the proofs of the parts of Theorems \ref{thm:well-posedness} and \ref{thm:optim} related to the prox-linear scheme \eqref{eq:semi-implicit-JKO}. We start by proving that \eqref{eq:semi-implicit-JKO} admits a unique minimizer. The proof follows the same steps as the proof of Theorem \ref{thm:well-posedness} 1..
\begin{proof}[Proof of Theorem \ref{thm:well-posedness} 2.]
Given $\mu^0 \in \mathcal{P}_2^{\rho}(\mathbb R^d),$ consider the map
\begin{equation}
\label{eq:map-to-minimize-semi-implicit}
\mathcal{P}_2^{\rho}(\mathbb R^d) \ni \mu \mapsto \mathcal{G}(\mu) \coloneqq \int_{\mathbb R^d} \frac{\delta F}{\delta \mu}(\mu^0, x)(\mu-\mu^0)(\mathrm{d}x) + \sigma \operatorname{KL}(\mu|\rho) + \frac{1}{2\tau}\mathcal{W}_2^2(\mu, \mu^0).
\end{equation}
Observe that
\begin{equation*}
\argmin_{\mu \in \mathcal{P}_2^{\rho}(\mathbb R^d)} \mathcal{G}(\mu) = \argmin_{\mu \in \mathcal{P}_2^{\rho}(\mathbb R^d)} \left\{\int_{\mathbb R^d} \frac{\delta F}{\delta \mu}(\mu^0, x)\mu(\mathrm{d}x) + \sigma \operatorname{KL}(\mu|\rho) + \frac{1}{2\tau}\mathcal{W}_2^2(\mu, \mu^0)\right\},
\end{equation*}
and therefore it suffices to show that $\mu^{1} \in \mathcal{P}_2^{\rho}(\mathbb R^d)$ is the unique minimizer of $\mathcal{G}.$ The existence proof follows the same steps as the proof of Theorem \ref{thm:well-posedness} 1., with two key differences. First, to show that $\mathcal{G}$ is bounded below, note that by Definition \ref{def:flat-derivative}, there exists $C_F>0$ such that $\left|\frac{\delta F}{\delta \mu}(\mu, x)\right|\leq C_F(1+|x|^2),$ for all $\mu \in \mathcal{P}_2^{\rho}(\mathbb R^d)$ and $x \in \mathbb R^d.$ Hence
\begin{align*}
    \int_{\mathbb R^d} \frac{\delta F}{\delta \mu}(\mu^0, x)\mu(\mathrm{d}x) \geq -C_F \left(1+ \int_{\mathbb R^d}|x|^2\mu(\mathrm{d}x)\right),
\end{align*}
which is finite for all $\mu \in \mathcal{P}_2^{\rho}(\mathbb R^d).$ Using \eqref{eq:KL-nonneg} and the non-negativity of $\frac{1}{2\tau}\mathcal{W}_2^2(\cdot,\mu^0),$ it follows that $\inf_{\mu \in \mathcal{P}_2^{\rho}(\mathbb R^d)} \mathcal{G}(\mu) > -\infty.$ 

Second, instead of relying on the weak lower semi-continuity of $F,$ observe that since $x \mapsto \frac{\delta F}{\delta \mu}(\mu^0, x)$ is continuous with at most quadratic growth, it follows that
\begin{equation*}
    \lim_{k \to \infty}\int_{\mathbb R^d} \frac{\delta F}{\delta \mu}(\mu^0, x) \mu_k(\mathrm{d}x) = \int_{\mathbb R^d} \frac{\delta F}{\delta \mu}(\mu^0, x) \mu^*(\mathrm{d}x)
\end{equation*}
whenever $\mu_k \to \mu^*$ weakly as $k \to \infty$ (see \cite[Definition 6.8 (iv)]{villani2008optimal}.

The uniqueness of the minimizer of $\mathcal{G}$ follows from the linearity of $\mathcal{P}_2^{\rho}(\mathbb R^d) \ni \mu \mapsto \int_{\mathbb R^d}\frac{\delta F}{\delta \mu}(\mu^0, x)\mu(\mathrm{d}x)$, the convexity of $\mathcal{P}_2^{\rho}(\mathbb R^d) \ni \mu \mapsto \mathcal{W}_2^2(\mu, \mu^0),$ and the strict convexity of $\mathcal{P}_2^{\rho}(\mathbb R^d) \ni \mu \mapsto \operatorname{KL}(\mu|\rho).$

Therefore, it follows inductively that, for each $n \in \mathbb N,$ given $\mu^n \in \mathcal{P}_2^{\rho}(\mathbb R^d),$ the scheme \eqref{eq:semi-implicit-JKO} admits a unique minimizer $\mu^{n+1} \in \mathcal{P}_2^{\rho}(\mathbb R^d).$ Hence, if $\mu^0 \in \mathcal{P}_2^{\rho}(\mathbb R^d),$ then $\left(\mu^n\right)_{n \in \mathbb N} \subset \mathcal{P}_2^{\rho}(\mathbb R^d)$ along the scheme \eqref{eq:semi-implicit-JKO}.
\end{proof}
Therefore, via Theorem \ref{thm:existence-optimal-coupling}, we obtain 
\begin{corollary}[Existence of optimal transport maps along \eqref{eq:semi-implicit-JKO}]
\label{corollary:existence-optimal-transport-semi-implicit}
Let $\nu \in \mathcal{P}_2(\mathbb R^d),$ $\rho \in \mathcal{P}_2^{\lambda}(\mathbb R^d).$ If $F \in \mathcal{C}^1,$ then given $\mu^0 \in \mathcal{P}_2^{\rho}(\mathbb R^d),$ there exists a unique $\mu^n$-a.e. optimal transport map $T_{\mu^n}^{\nu}:\mathbb R^d \to \mathbb R^d$ from $\mu^n$ to $\nu.$ In particular, if $\nu = \mu^{n+1},$ there also exists a unique $\mu^{n+1}$-a.e. optimal transport map $T_{\mu^{n+1}}^{\mu^n}:\mathbb R^d \to \mathbb R^d$ such that $T_{\mu^{n+1}}^{\mu^n} \circ T_{\mu^n}^{\mu^{n+1}} = I_d,$ $\mu^n$-a.e and $T_{\mu^n}^{\mu^{n+1}} \circ T_{\mu^{n+1}}^{\mu^n} = I_d,$ $\mu^{n+1}$-a.e..
\end{corollary}
Next, we present the proof of Theorem \ref{thm:optim} 2. The proof follows the same steps as the proof of Theorem \ref{thm:optim} 1.
\begin{proof}[Proof of Theorem \ref{thm:optim} 2.]
\emph{Step 1.} Since $\mu^0 \in \mathcal{P}_2^{\rho}(\mathbb R^d),$ $F \in \mathcal{C}^1,$ Theorem \ref{thm:well-posedness} 2. guarantees the existence and uniqueness of a minimizer $\mu^1 \in \mathcal{P}_2^{\rho}(\mathbb R^d)$ for \eqref{eq:map-to-minimize-semi-implicit}. We will now prove that $\mu^1 \in \mathfrak C.$

Since $\mu^1 \in \mathcal{P}_2^{\rho}(\mathbb R^d)$ is a minimizer for \eqref{eq:map-to-minimize-semi-implicit}, it follows that for any $\mu \in \mathcal{P}_2^{\rho}(\mathbb R^d),$
\begin{multline}
\label{eq:slope-KL-estimate}
\begin{aligned}
\operatorname{KL}(\mu^1|\rho) - \operatorname{KL}(\mu|\rho)
&\leq \frac{1}{\sigma}\int_{\mathbb R^d} \frac{\delta F}{\delta \mu}(\mu^0, x)(\mu-\mu^1)(\mathrm{d}x)\\ &+ \frac{1}{2\tau\sigma}\mathcal{W}_2(\mu, \mu^1)\left(\mathcal{W}_2(\mu, \mu^0) + \mathcal{W}_2(\mu^1, \mu^0)\right).
\end{aligned}
\end{multline}
Observe that the map $\hat{\mathcal{F}}(\mu)\coloneqq\int_{\mathbb R^d} \frac{\delta F}{\delta \mu}(\mu^0, x)\mu(\mathrm{d}x)$ is flat differentiable by Definition \ref{def:flat-derivative}, with $\frac{\delta \hat{\mathcal{F}}}{\delta \mu}(\mu,\cdot) = \frac{\delta F}{\delta \mu}(\mu^0, \cdot),$ and by Assumption \ref{assumption:L-derivative}, it is also Wasserstein differentiable, with $\nabla_\mu \hat{\mathcal{F}}(\mu) = \nabla_\mu F(\mu^0).$ 

Since $\mu^1,\mu \in \mathcal{P}_2^{\lambda}(\mathbb R^d)$ due to $\rho \ll \lambda,$ there exists a unique optimal coupling $\gamma = \left(I_d,T_{\mu^1}^\mu\right)_\#\mu^1 \in \Gamma_o(\mu^1, \mu).$ By Definition \ref{def:wass-differentiability}, this gives
\begin{align*}
    \hat{\mathcal{F}}(\mu)-\hat{\mathcal{F}}(\mu^1) &= \int_{\mathbb R^d} \langle \nabla_\mu \hat{\mathcal{F}}(\mu^1)(x), T_{\mu^1}^\mu(x)-x\rangle\ \mu^1(\mathrm{d}x) + o\left(\mathcal{W}_2(\mu^1,\mu)\right)\\
    &\leq \left\|\nabla_\mu \hat{\mathcal{F}}(\mu^1)\right\|_{L_{\mu^1}^2(\mathbb R^d)}\left\|T_{\mu^1}^\mu-I_d\right\|_{L_{\mu^1}^2(\mathbb R^d)} + o\left(\mathcal{W}_2(\mu^1,\mu)\right)\\
    &=\left\|\nabla_\mu \hat{\mathcal{F}}(\mu^1)\right\|_{L_{\mu^1}^2(\mathbb R^d)}\mathcal{W}_2(\mu^1,\mu) + o\left(\mathcal{W}_2(\mu^1,\mu)\right)\\
    &= \left\|\nabla_\mu F(\mu^0)\right\|_{L_{\mu^1}^2(\mathbb R^d)}\mathcal{W}_2(\mu^1,\mu) + o\left(\mathcal{W}_2(\mu^1,\mu)\right),
\end{align*}
where the penultimate equality follows from optimality of $\gamma.$ Note that $\left\|\nabla_\mu F(\mu^0)\right\|_{L_{\mu^1}^2(\mathbb R^d)} < \infty$ since $\nabla_\mu F(\mu^0)$ grows at most linearly. The remainder of the proof proceeds identically to the proof of Theorem \ref{thm:optim} 1.

Therefore, it follows inductively that, for each $n \in \mathbb N,$ given $\mu^n \in \mathcal{P}_2^{\rho}(\mathbb R^d),$ the unique minimizer $\mu^{n+1}$ of \eqref{eq:semi-implicit-JKO} belongs to $\mathfrak C.$ Hence, if $\mu^0 \in \mathcal{P}_2^{\rho}(\mathbb R^d),$ then $\left(\mu^n\right)_{n \in \mathbb N} \subset \mathfrak C$ along the scheme \eqref{eq:semi-implicit-JKO}.

\emph{Step 2.} Almost identical to the proof of \emph{Step 2} of Theorem \ref{thm:optim} 1. once we replace $F$ by $\int_{\mathbb R^d} \frac{\delta F}{\delta \mu}(\mu^n, x)(\mu-\mu^n)(\mathrm{d}x),$ and, for each $n \in \mathbb N,$ define
\begin{equation*}
    J_n(\mu) \coloneqq \int_{\mathbb R^d} \frac{\delta F}{\delta \mu}(\mu^n, x)(\mu-\mu^n)(\mathrm{d}x) + \sigma\operatorname{KL}(\mu|\rho),
\end{equation*}
for all $\mu \in \mathcal{P}_2^{\rho}(\mathbb R^d).$ The main distinction is that, instead of relying on the weak lower semi-continuity of $F,$ we observe that since $\frac{\delta F}{\delta \mu}(\mu, \cdot)$ is continuous and grows at most quadratically uniformly in $\mu$, it follows that
\begin{equation*}
    \lim_{k \to \infty}\int_{\mathbb R^d} \frac{\delta F}{\delta \mu}(\mu^n, x) \mu_k(\mathrm{d}x) = \int_{\mathbb R^d} \frac{\delta F}{\delta \mu}(\mu^n, x) \mu^*(\mathrm{d}x)
\end{equation*}
whenever $\mu_k \to \mu^*$ weakly as $k \to \infty$ (see \cite[Definition 6.8 (iv)]{villani2008optimal}. Combining this with the weak lower semi-continuity of $\operatorname{KL}(\cdot|\rho),$ we conclude that $J_n$ is weakly lower semi-continuous.
\end{proof}
\section{Proximal gradient scheme}
\label{app:proximal-scheme}
In this section, we present the proofs of the parts of Theorems \ref{thm:well-posedness} and \ref{thm:optim} related to the proximal gradient scheme \eqref{eq:proximal-JKO}.

Before applying the same argument as in the proof of Theorem \ref{thm:well-posedness} 1.-2. to show the existence and uniqueness of a minimizer of the JKO step \eqref{eq:proximal-JKO}, we prove that the map $I_d-\tau\nabla_\mu F(\mu^n)(\cdot)$ is an optimal transport map from $\mu^n$ to $\nu^{n+1} = \left(I_d-\tau\nabla_\mu F(\mu^n)(\cdot)\right)_{\#}\mu^n,$ and moreover that $\nu^{n+1} \in \mathcal{P}_2^{\rho}(\mathbb R^d).$ As we saw in the proofs of Theorem \ref{thm:well-posedness} 1.-2., the previous step in the JKO update, in this case $\nu^{n+1},$ needs to be absolutely continuous with respect to $\rho.$ This is proved in the following lemma, which is a generalization of \cite[Lemma 2]{korbaproximal}.
\begin{lemma}
\label{lemma:pushforward-is-optimal}
Let Assumption \ref{assumption:L-derivative}, \ref{ass:lip-intrinsic} hold. Let $\mu \in \mathcal{P}_2(\mathbb R^d),$ $\sigma > 0$ and $\nu = \left(I_d-\tau\nabla_\mu F(\mu)(\cdot)\right)_{\#}\mu.$ If $\tau < \frac{1}{L_F'},$ then the optimal transport map from $\mu$ to $\nu$ is given by
\begin{equation*}
T_{\mu}^{\nu} = I_d-\tau\nabla_\mu F(\mu)(\cdot).
\end{equation*}
Moreover, if $\mu \in \mathcal{P}_2^{\rho}(\mathbb R^d)$ and $\rho \sim \lambda,$ then $\nu \in \mathcal{P}_2^{\rho}(\mathbb R^d).$
\end{lemma}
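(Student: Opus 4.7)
The plan is to realize the map $T(x) \coloneqq x - \tau \nabla_\mu F(\mu)(x)$ as the gradient of a (strongly) convex function on $\mathbb R^d$ and then invoke Theorem \ref{thm:sufficient-cond-optimal-convexity} to identify $T$ as the optimal transport map. Since Assumption \ref{assumption:L-derivative} gives $\nabla_\mu F(\mu)(\cdot) = \nabla \tfrac{\delta F}{\delta\mu}(\mu,\cdot)$, we may write
\begin{equation*}
T(x) = \nabla \varphi(x), \qquad \varphi(x) \coloneqq \frac{|x|^2}{2} - \tau\, \frac{\delta F}{\delta\mu}(\mu,x).
\end{equation*}
Thus everything reduces to showing that $\varphi$ is convex under the hypothesis $\tau < 1/L_F'$.

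For the convexity, I would verify monotonicity of $\nabla\varphi$. Using Assumption \ref{ass:lip-intrinsic} and the Cauchy--Schwarz inequality, for any $x,y \in \mathbb R^d$,
\begin{equation*}
\left\langle \nabla_\mu F(\mu)(x) - \nabla_\mu F(\mu)(y),\, x-y\right\rangle \leq \left|\nabla_\mu F(\mu)(x) - \nabla_\mu F(\mu)(y)\right||x-y| \leq L_F'|x-y|^2,
\end{equation*}
hence
\begin{equation*}
\left\langle \nabla\varphi(x) - \nabla\varphi(y),\, x-y\right\rangle \geq (1-\tau L_F')|x-y|^2 \geq 0.
\end{equation*}
Joint continuity of $\nabla_\mu F(\mu)(\cdot)$ (Assumption \ref{assumption:L-derivative}(ii)) makes $\nabla\varphi$ continuous, so monotonicity yields convexity of $\varphi$, and in fact $(1-\tau L_F')$-strong convexity. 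The boundedness $|\nabla_\mu F(\mu)(\cdot)|\leq C_F'$ from Assumption \ref{assumption:L-derivative}(i) also gives $|T(x)|\leq |x|+\tau C_F'$, so $\nu \in \mathcal P_2(\mathbb R^d)$. Applying Theorem \ref{thm:sufficient-cond-optimal-convexity} to $T = \nabla\varphi$ delivers $T_\mu^\nu = I_d - \tau\nabla_\mu F(\mu)(\cdot)$.

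The remaining (and main) point is to establish $\nu \ll \lambda$. Here I would exploit that the same estimates used above also give the two-sided bound
\begin{equation*}
(1-\tau L_F')|x-y| \leq |T(x)-T(y)| \leq (1+\tau L_F')|x-y|,
\end{equation*}
where the lower bound follows by combining strong monotonicity with Cauchy--Schwarz and uses $\tau L_F' < 1$ crucially. Consequently $T$ is a bi-Lipschitz homeomorphism onto its image, and the inverse $T^{-1}$ (defined on $T(\mathbb R^d)$, extended arbitrarily elsewhere) is Lipschitz with constant $(1-\tau L_F')^{-1}$, hence sends Lebesgue-null sets to Lebesgue-null sets. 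Given any Borel $B\subset\mathbb R^d$ with $\lambda(B)=0$, one has $\nu(B) = \mu(T^{-1}(B)) = 0$ because $\lambda(T^{-1}(B))=0$ and $\mu \in \mathcal P_2^{\lambda}(\mathbb R^d)$. The obstacle to watch is the strictness of the step-size condition $\tau < 1/L_F'$: it is what turns the monotonicity of $\nabla\varphi$ into the bi-Lipschitz inverse estimate and thereby preserves absolute continuity under the pushforward.
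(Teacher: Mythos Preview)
Your argument is correct and matches the paper's proof almost verbatim for the optimality claim: both define $\varphi(x)=\tfrac{1}{2}|x|^2-\tau\,\tfrac{\delta F}{\delta\mu}(\mu,x)$, verify $(1-\tau L_F')$-strong monotonicity of $\nabla\varphi$, and invoke Theorem~\ref{thm:sufficient-cond-optimal-convexity}. The only substantive difference is in the absolute continuity step: the paper dispatches $\nu\in\mathcal P_2^\lambda(\mathbb R^d)$ by citing \cite[Lemma~5.5.3]{ambrosio2008gradient} (which uses injectivity and strong convexity), whereas you give a direct argument via the two-sided Lipschitz bound $(1-\tau L_F')|x-y|\le|T(x)-T(y)|\le(1+\tau L_F')|x-y|$ to conclude that $T^{-1}$ is Lipschitz and therefore preserves Lebesgue-null sets. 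Your route is more elementary and self-contained; the paper's citation is shorter but hides the mechanism. Both are valid.
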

\begin{proof}
Note that Assumption \ref{assumption:L-derivative} together with that fact that $\mu \in \mathcal{P}_2(\mathbb R^d)$ imply that
\begin{equation*}
\int_{\mathbb R^d} \left|x- \tau\nabla_\mu F(\mu)(x)\right|^2 \mu(\mathrm{d}x) < \infty,
\end{equation*}
and hence $\nu \in \mathcal{P}_2(\mathbb R^d).$

Since $\left(I_d-\tau\nabla_\mu F(\mu)(\cdot)\right)_{\#}\mu$ is the pushforward measure from $\mu$ to $\nu,$ by Theorem \ref{thm:sufficient-cond-optimal-convexity}, it suffices to show that $I_d-\tau\nabla_\mu F(\mu)(\cdot)$ can be written as the gradient of a convex function. Let $u(x) \coloneqq \frac12|x|^2 - \tau \frac{\delta F}{\delta \mu}(\mu, x).$ Then, for any $x \in \mathbb R^d,$ $\nabla u(x) = x - \tau \nabla_\mu F(\mu)(x).$ Moreover,
\begin{multline}
\label{eq:u-str-conv}
\begin{aligned}
(x-y)\cdot \left(\nabla u(x) - \nabla u(y)\right) &=  (x-y)\cdot \left(x-y - \tau \left(\nabla_\mu F(\mu)(x) - \nabla_\mu F(\mu)(y)\right)\right)\\
&= |x-y|^2 - \tau (x-y)\cdot\left(\nabla_\mu F(\mu)(x) - \nabla_\mu F(\mu)(y)\right)\\
&\geq |x-y|^2 - \tau|x-y| \left|\nabla_\mu F(\mu)(x) - \nabla_\mu F(\mu)(y)\right|\\
&\geq \left(1- \tau L_F'\right)|x-y|^2.
\end{aligned}
\end{multline}
Since by assumption $\tau < \frac{1}{L_F'},$ it follows that $u$ is $\left(1- \tau L_F'\right)$-strongly convex and moreover $\nabla u$ is injective. By strong convexity of $u$ and Theorem \ref{thm:sufficient-cond-optimal-convexity}, we obtain that $\nabla u$ is an optimal transport map, and we denote it by
\begin{equation*}
T_{\mu}^{\nu} = I_d-\tau\nabla_\mu F(\mu)(\cdot).
\end{equation*}
Since $\mu \in \mathcal{P}_2^{\rho}(\mathbb R^d)$ and $\rho \ll \lambda$, we have $\mu \in \mathcal{P}_2^{\lambda}(\mathbb R^d)$. An argument analogous to that of \eqref{eq:u-str-conv} yields
\begin{equation*}
\left|\nabla u(x) - \nabla u(y)\right| \leq \left(1 + \tau L_F'\right) |x - y|,
\end{equation*}
showing that $\nabla u$ is Lipschitz. Hence, by Rademacher’s theorem, $\nabla u$ is differentiable almost everywhere. Moreover, by the strong convexity of $u,$ the determinant of the Hessian $\nabla^2 u$ is positive almost everywhere. Combined with the injectivity of $\nabla u,$ this implies, by \cite[Lemma 5.5.3]{ambrosio2008gradient}, that $\nu \in \mathcal{P}_2^{\lambda}(\mathbb R^d)$. Since $\lambda \ll \rho$, we obtain $\nu \in \mathcal{P}_2^{\rho}(\mathbb R^d).$
\end{proof}
Now, since $\nu^{n+1} \in \mathcal{P}_2^{\rho}(\mathbb R^d)$ for a sufficiently small step-size $\tau,$ the existence and uniqueness of a minimizer for \eqref{eq:proximal-JKO} is a consequence of Theorem \ref{thm:well-posedness} 1. or 2.
\begin{proof}[Proof of Theorem \ref{thm:well-posedness} 3.]
Given $\mu^0 \in \mathcal{P}_2^{\rho}(\mathbb R^d),$ consider the map
\begin{equation}
\label{eq:map-to-minimize-proximal}
\mathcal{P}_2^{\rho}(\mathbb R^d) \ni \mu \mapsto \mathcal{H}(\mu) \coloneqq \sigma \operatorname{KL}(\mu|\rho) + \frac{1}{2\tau}\mathcal{W}_2^2(\mu, \nu^1).
\end{equation}
From Lemma \ref{lemma:pushforward-is-optimal} it follows that given $\mu^0 \in \mathcal{P}_2^{\rho}(\mathbb R^d),$ we obtain $\nu^{1} \in \mathcal{P}_2^{\rho}(\mathbb R^d).$ Hence, we can invoke Theorem \ref{thm:well-posedness} (either 1. or 2.) with $F=0$ to obtain the existence of a unique minimizer $\mu^1 \in \mathcal{P}_2^{\rho}(\mathbb R^d)$ for $\mathcal{H}.$

Therefore, using Lemma \ref{lemma:pushforward-is-optimal}, it follows inductively that, for each $n \in \mathbb N,$ given $\mu^n \in \mathcal{P}_2^{\rho}(\mathbb R^d),$ we obtain $\nu^{n+1} \in \mathcal{P}_2^{\rho}(\mathbb R^d),$ and hence $\mu^{n+1} \in \mathcal{P}_2^{\rho}(\mathbb R^d).$ Therefore, if $\mu^0 \in \mathcal{P}_2^{\rho}(\mathbb R^d),$ then $\left(\nu^n, \mu^n\right)_{n \in \mathbb N} \subset \mathcal{P}_2^{\rho}(\mathbb R^d) \times \mathcal{P}_2^{\rho}(\mathbb R^d)$ along the scheme \eqref{eq:proximal-JKO}.
\end{proof}
Therefore, via Theorem \ref{thm:existence-optimal-coupling}, we obtain
\begin{corollary}[Existence of optimal transport maps along \eqref{eq:proximal-JKO}]
\label{corollary:existence-optimal-transport-proximal}
Let $F \in \mathcal{C}^1,$ Assumptions \ref{assumption:L-derivative}, \ref{ass:lip-intrinsic} hold and $\rho \in \mathcal{P}_2(\mathbb R^d)$ such that $\rho \sim \lambda.$ If $\tau < \frac{1}{L_F'},$ then given $\mu^0 \in \mathcal{P}_2^{\rho}(\mathbb R^d),$ there exist unique $\mu^n$-a.e. and $\nu^{n+1}$-a.e. optimal transport maps $T_{\mu^n}^{\nu^{n+1}}:\mathbb R^d \to \mathbb R^d$ and $T_{\nu^{n+1}}^{\mu^n}:\mathbb R^d \to \mathbb R^d$ from $\mu^n$ to $\nu^{n+1}$ and from $\nu^{n+1}$ to $\mu^n,$ respectively, given by $T_{\mu^n}^{\nu^{n+1}} = I_d-\tau \nabla_\mu F(\mu^n)(\cdot)$ and $T_{\nu^{n+1}}^{\mu^n} = \left(I_d-\tau\nabla_\mu F(\mu^n)(\cdot) \right)^{-1}.$ Moreover, there also exist unique $\nu^{n+1}$-a.e. and $\mu^{n+1}$-a.e. optimal transport maps $T_{\nu^{n+1}}^{\mu^{n+1}}:\mathbb R^d \to \mathbb R^d$ and $T_{\mu^{n+1}}^{\nu^{n+1}}:\mathbb R^d \to \mathbb R^d$ such that $T_{\nu^{n+1}}^{\mu^{n+1}} \circ T_{\mu^{n+1}}^{\nu^{n+1}} = I_d,$ $\mu^{n+1}$-a.e. and $T_{\mu^{n+1}}^{\nu^{n+1}} \circ T_{\nu^{n+1}}^{\mu^{n+1}} = I_d,$ $\nu^{n+1}$-a.e..
\end{corollary}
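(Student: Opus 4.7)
The plan is to prove the corollary by a straightforward induction on $n \in \mathbb N$, reducing the statement to the inputs already established earlier in this section. The inductive setup is supplied by Lemma \ref{lemma:pushforward-is-optimal} together with Theorem \ref{thm:existence-minimizer-proximal}: if $\mu^n \in \mathcal{P}_2^{\lambda}(\mathbb R^d)$, then $\nu^{n+1} = (I_d - \tau\nabla_\mu F(\mu^n))_{\#}\mu^n \in \mathcal{P}_2^{\lambda}(\mathbb R^d)$, and consequently the JKO step delivers $\mu^{n+1} \in \mathcal{P}_2^{\lambda}(\mathbb R^d)$. Thus, starting from $\mu^0 \in \mathcal{P}_2^{\lambda}(\mathbb R^d)$, the entire sequence $(\nu^n, \mu^n)_{n \in \mathbb N}$ lies in $\mathcal{P}_2^{\lambda}(\mathbb R^d) \times \mathcal{P}_2^{\lambda}(\mathbb R^d)$. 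The identification $T_{\mu^n}^{\nu^{n+1}} = I_d - \tau\nabla_\mu F(\mu^n)(\cdot)$ is then exactly the content of Lemma \ref{lemma:pushforward-is-optimal} applied at level $n$, and uniqueness of this optimal map follows from Theorem \ref{thm:existence-optimal-coupling} since $\mu^n \in \mathcal{P}_2^{\lambda}(\mathbb R^d)$.

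The only non-routine piece is establishing optimality of the inverse map $T_{\nu^{n+1}}^{\mu^n} = (I_d - \tau\nabla_\mu F(\mu^n))^{-1}$. I would use the same potential function as in the proof of Lemma \ref{lemma:pushforward-is-optimal}, namely $u(x) = \tfrac{1}{2}|x|^2 - \tau \tfrac{\delta F}{\delta \mu}(\mu^n, x)$, which under the hypothesis $\tau < 1/L_F'$ is $(1 - \tau L_F')$-strongly convex with $\nabla u = I_d - \tau\nabla_\mu F(\mu^n)$. Strong convexity plus the $\mathcal{C}^1$ regularity of $u$ (via Assumption \ref{assumption:L-derivative}) ensures that $\nabla u$ is a global homeomorphism onto its image, and that its inverse is precisely $\nabla u^\ast$, where $u^\ast$ is the Legendre conjugate of $u$. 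Since $u^\ast$ is convex and differentiable on the image of $\nabla u$, Theorem \ref{thm:sufficient-cond-optimal-convexity} applies: the pushforward of $\nu^{n+1}$ by $\nabla u^\ast$ recovers $\mu^n$, and this pushforward is therefore optimal. Uniqueness follows again from Theorem \ref{thm:existence-optimal-coupling} applied with source $\nu^{n+1} \in \mathcal{P}_2^{\lambda}(\mathbb R^d)$.

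Finally, the second pair of optimal transport maps between $\nu^{n+1}$ and $\mu^{n+1}$ is obtained by a direct application of Theorem \ref{thm:existence-optimal-coupling} in both directions: both measures sit in $\mathcal{P}_2^{\lambda}(\mathbb R^d)$ by the induction above, so unique optimal transport maps $T_{\nu^{n+1}}^{\mu^{n+1}}$ and $T_{\mu^{n+1}}^{\nu^{n+1}}$ exist, and the inversion identities $T_{\nu^{n+1}}^{\mu^{n+1}} \circ T_{\mu^{n+1}}^{\nu^{n+1}} = I_d$ $\mu^{n+1}$-a.e.\ and $T_{\mu^{n+1}}^{\nu^{n+1}} \circ T_{\nu^{n+1}}^{\mu^{n+1}} = I_d$ $\nu^{n+1}$-a.e.\ are a standard byproduct of that theorem (both maps arise as gradients of mutually Legendre-dual convex potentials). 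I do not anticipate a genuine obstacle in this argument; the only subtle point is the Legendre duality step for $\nabla u^\ast$, which is why I would isolate it before invoking Theorem \ref{thm:sufficient-cond-optimal-convexity}.
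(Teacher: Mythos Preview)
Your proposal is correct and follows essentially the same approach as the paper: both argue by induction via Lemma~\ref{lemma:pushforward-is-optimal} and Theorem~\ref{thm:existence-minimizer-proximal} that $(\nu^n,\mu^n)_{n}\subset \mathcal{P}_2^{\lambda}(\mathbb R^d)\times \mathcal{P}_2^{\lambda}(\mathbb R^d)$, and then invoke Theorem~\ref{thm:existence-optimal-coupling}. The one minor difference is that you explicitly work out the Legendre duality to identify $T_{\nu^{n+1}}^{\mu^n}=(\nabla u)^{-1}=\nabla u^\ast$ and appeal to Theorem~\ref{thm:sufficient-cond-optimal-convexity}, whereas the paper simply cites part~(iii) of Theorem~\ref{thm:existence-optimal-coupling}, which already delivers the inverse map and the composition identities once both source and target are in $\mathcal{P}_2^{\lambda}(\mathbb R^d)$; your extra step is correct but unnecessary.
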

Next, we present the proof of Theorem \ref{thm:optim} 3., which is a particular case of Theorem \ref{thm:optim} 1. and 2.
\begin{proof}[Proof of Theorem \ref{thm:optim} 3.]
\emph{Step 1.} Since $\mu^0 \in \mathcal{P}_2^{\rho}(\mathbb R^d),$ Theorem \ref{thm:well-posedness} 3. guarantees the existence and uniqueness of a minimizer $\mu^1 \in \mathcal{P}_2^{\rho}(\mathbb R^d)$ for \eqref{eq:map-to-minimize-proximal}. Following either the proof of Theorem \ref{thm:optim} 1. or 2. will show that $\mu^1 \in \mathfrak C.$ 

Therefore, it follows inductively that, for each $n \in \mathbb N,$ given $\mu^n \in \mathcal{P}_2^{\rho}(\mathbb R^d),$ the unique minimizer $\mu^{n+1}$ of \eqref{eq:proximal-JKO} belongs to $\mathfrak C.$ Hence, if $\mu^0 \in \mathcal{P}_2^{\rho}(\mathbb R^d),$ then $\left(\mu^n\right)_{n \in \mathbb N} \subset \mathfrak C$ along the scheme \eqref{eq:proximal-JKO}.

\emph{Step 2.} The result follows from either Theorem \ref{thm:optim} 1. or 2. with $F=0.$
\end{proof}

\section{Additional lemmas}
In this section, we prove two results that are key to proving Theorem \ref{thm:exp-conv}. As in Euclidean geometry, where Lipschitz continuity of the gradients implies smoothness with respect to the squared Euclidean distance, an analogue implication holds in the Wasserstein space. In particular, as the following result shows, Assumption \ref{ass:lip-intrinsic} implies that $F$ is smooth with respect to $\mathcal{W}_2^2.$ 
\begin{lemma}[$L_F$-smoothness of $F$ relative to $\mathcal{W}_2^2$]
\label{ass:relative-smoothness}
Assume $F \in \mathcal{C}^1$ and Assumptions \ref{assumption:L-derivative} and \ref{ass:lip-intrinsic} hold. Then, for any $\mu', \mu \in \mathcal{P}_2(\mathbb R^d),$ it holds
\begin{equation*}
F(\mu') - F(\mu) - \left\langle \nabla_\mu F(\mu)(\cdot), P_{\mu}^{\mu'} - I_d\right\rangle_{L_{\mu}^2(\mathbb R^d)}\leq L_F'\left\|I_d - P_{\mu}^{\mu'}\right\|^2_{L_{\mu}^2(\mathbb R^d)},
\end{equation*}
where $P_{\mu}^{\mu'}:\mathbb R^d \to \mathbb R^d$ is a measurable map such that ${P_{\mu}^{\mu'}}_\#\mu$ is the pushforward measure of $\mu$ onto $\mu'.$ If $P_{\mu}^{\mu'}$ is in fact an optimal transport map from $\mu$ to $\mu',$ then $\left\|I_d - P_{\mu}^{\mu'}\right\|^2_{L_{\mu}^2(\mathbb R^d)} = \mathcal{W}_2^2(\mu', \mu).$
\end{lemma}
\begin{proof}
Let $\mu', \mu \in \mathcal{P}_2(\mathbb R^d).$ Then $\gamma = \left(I_d, P_{\mu}^{\mu'}\right)_{\#} \mu$ is a coupling between $\mu$ and $\mu'.$ For any $\varepsilon \in [0,1],$ set $\mu^{\varepsilon}= \mu + \varepsilon(\mu'- \mu).$ Then since $F \in \mathcal{C}^1,$ it follows by Remark \ref{rmk:FTC} that
\begin{multline*}
\begin{aligned}
&F(\mu') - F(\mu) - \left\langle \nabla_\mu F(\mu)(\cdot), P_{\mu}^{\mu'} - I_d\right\rangle_{L_{\mu}^2(\mathbb R^d)}\\ 
&=\int_0^1 \int_{\mathbb R^d} \frac{\delta F}{\delta \mu} (\mu^{\varepsilon}, x) (\mu'-\mu) (\mathrm{d}x)\mathrm{d}\varepsilon - \int_{\mathbb R^d} \nabla_\mu F(\mu)(x) \cdot \left(P_{\mu}^{\mu'}(x) - x \right) \mu(\mathrm{d}x)\\
& = \int_0^1 \int_{\mathbb R^d \times \mathbb R^d} \left(\frac{\delta F}{\delta \mu} (\mu^{\varepsilon}, y) - \frac{\delta F}{\delta \mu} (\mu^{\varepsilon}, x)\right)\gamma (\mathrm{d}x, \mathrm{d}y)\mathrm{d}\varepsilon\\ 
&- \int_{\mathbb R^d \times \mathbb R^d} \nabla_\mu F(\mu)(x) \cdot \left(y - x \right) \gamma (\mathrm{d}x, \mathrm{d}y).
\end{aligned}
\end{multline*}
By Assumption \ref{assumption:L-derivative}, we have
\begin{multline*}
\begin{aligned}
&\int_0^1 \int_{\mathbb R^d \times \mathbb R^d} \left(\frac{\delta F}{\delta \mu} (\mu^{\varepsilon}, y) - \frac{\delta F}{\delta \mu} (\mu^{\varepsilon}, x)\right)\gamma (\mathrm{d}x, \mathrm{d}y)\mathrm{d}\varepsilon\\ &= \int_0^1 \int_{\mathbb R^d \times \mathbb R^d} \int_0^1 \nabla_\mu F(\mu^{\varepsilon})(x+\eta(y-x)) \cdot \left(y-x\right)\mathrm{d}\eta\gamma (\mathrm{d}x, \mathrm{d}y)\mathrm{d}\varepsilon.
\end{aligned}
\end{multline*}
By Assumption \ref{ass:lip-intrinsic}, the Cauchy-Schwarz inequality and convexity of $\mathcal{W}_2$, we obtain
\begin{multline*}
\begin{aligned}
&F(\mu') - F(\mu) - \left\langle \nabla_\mu F(\mu)(\cdot), P_{\mu}^{\mu'} - I_d\right\rangle_{L_{\mu}^2(\mathbb R^d)}\\
&= \int_0^1 \int_{\mathbb R^d \times \mathbb R^d} \int_0^1 \left( \nabla_\mu F(\mu^{\varepsilon})(x+\eta(y-x)) - \nabla_\mu F(\mu)(x)\right)\cdot \left(y-x\right)\mathrm{d}\eta\gamma (\mathrm{d}x, \mathrm{d}y)\mathrm{d}\varepsilon\\
&\leq L_F' \int_0^1 \int_{\mathbb R^d \times \mathbb R^d} \int_0^1 \left(\eta|y-x|+\mathcal{W}_2(\mu^{\varepsilon}, \mu)\right)\left|y-x\right|\mathrm{d}\eta\gamma (\mathrm{d}x, \mathrm{d}y)\mathrm{d}\varepsilon\\
&\leq L_F' \int_0^1 \int_{\mathbb R^d \times \mathbb R^d} \int_0^1 \left(\eta|y-x|+\varepsilon\mathcal{W}_2(\mu', \mu)\right)\left|y-x\right|\mathrm{d}\eta\gamma (\mathrm{d}x, \mathrm{d}y)\mathrm{d}\varepsilon\\
&= \frac{L_F'}{2} \int_{\mathbb R^d \times \mathbb R^d} |y-x|^2\gamma (\mathrm{d}x, \mathrm{d}y) + \frac{L_F'}{2}\mathcal{W}_2(\mu', \mu)\int_{\mathbb R^d \times \mathbb R^d}\left|y-x\right|\gamma (\mathrm{d}x, \mathrm{d}y)\\
&\leq L_F' \left\|I_d - P_{\mu}^{\mu'}\right\|^2_{L_{\mu}^2(\mathbb R^d)}.
\end{aligned}
\end{multline*}
\end{proof}
The following result is a consequence of the geodesic convexity of $\operatorname{KL}(\cdot|\rho)$ in the Wasserstein space and combined with Lemma \ref{ass:relative-smoothness}, it allows us to prove in Theorem \ref{thm:exp-conv} 2. that $\left(F^{\sigma}(\mu^n)\right)_n$ decreases along \eqref{eq:semi-implicit-JKO} as long as the step-size $\tau$ is small enough. It can also be viewed as a particular case of \cite[Lemma 4]{korbaproximal}. However, our proof is slightly different.
\begin{lemma}
\label{lemma:control-geod-KL}
Let Assumption \ref{ass:abs-cty-pi} hold. For any $\mu', \mu \in \mathfrak C,$ it holds 
\begin{equation*}
\operatorname{KL}(\mu'|\rho) - \operatorname{KL}(\mu|\rho) - \left\langle \nabla \log \frac{\mathrm{d}\mu'}{\mathrm{d}\rho}\left(T_{\mu}^{\mu'}\right), T_{\mu}^{\mu'} - I_d\right\rangle_{L_{\mu}^2(\mathbb R^d)} \leq 0.
\end{equation*}
\end{lemma}
\begin{proof}
Let $\mu', \mu \in \mathfrak C.$ Then since $\mathfrak C \subset \mathcal{P}_2^{\rho}(\mathbb R^d)$ and $\rho \in \mathcal{P}_2^{\lambda}(\mathbb R^d),$ it follows by Theorem \ref{thm:existence-optimal-coupling} that there exist unique $\mu$-a.e. and $\mu'$-a.e. optimal transport maps $T_{\mu}^{\mu'}:\mathbb R^d \to \mathbb R^d$ from $\mu$ to $\mu'$ and $T_{\mu'}^{\mu}:\mathbb R^d \to \mathbb R^d$ from $\mu'$ to $\mu$ such that $T_{\mu'}^{\mu} \circ T_{\mu}^{\mu'} = I_d,$ $\mu$-a.e and $T_{\mu}^{\mu'} \circ T_{\mu'}^{\mu} = I_d,$ $\mu'$-a.e..

Now we show that, for any $\varepsilon \in (0,1),$ $I_d+\varepsilon\left(T_{\mu'}^{\mu}-I_d\right)$ is the unique optimal transport map from $\mu'$ to $\left(I_d+\varepsilon\left(T_{\mu'}^{\mu}-I_d\right)\right)_{\#}\mu'.$ First, we observe that
\begin{equation*}
\int_{\mathbb R^d} \left|x+\varepsilon\left(T_{\mu'}^{\mu}(x)-x\right)\right|^2\mu'(\mathrm{d}x) \leq 2\int_{\mathbb R^d} |x|^2 \mu'(\mathrm{d}x) + 2\varepsilon^2\mathcal{W}_2^2(\mu', \mu) < \infty.
\end{equation*}
Hence, $\left(I_d+\varepsilon\left(T_{\mu'}^{\mu}-I_d\right)\right)_{\#}\mu' \in \mathcal{P}_2(\mathbb R^d).$ Therefore, by Theorem \ref{thm:sufficient-cond-optimal-convexity}, it suffices to show that $I_d+\varepsilon\left(T_{\mu'}^{\mu}-I_d\right)$ is the gradient of a convex differentiable $\mu'$-a.e. function. By Theorem \ref{thm:existence-optimal-coupling}, we have that
$T_{\mu'}^{\mu}(x) = \nabla \varphi(x)$ $\mu'\text{-a.e.}$ for a convex function $\varphi: \mathbb R^d \to \mathbb R.$ Hence, for $\mu'\text{-a.e.}$ $x,$
\begin{equation*}
x+\varepsilon\left(T_{\mu'}^{\mu}(x)-x\right) = (1-\varepsilon)x+\varepsilon T_{\mu'}^{\mu}(x) = \nabla\left((1-\varepsilon)\frac{|x|^2}{2}+\varepsilon\varphi(x)\right),
\end{equation*}
where the map $\mathbb R^d \ni x \mapsto (1-\varepsilon)\frac{|x|^2}{2}+\varepsilon\varphi(x) \in \mathbb R$ is convex and $\mu'\text{-a.e.}$ differentiable for all $\varepsilon \in (0,1).$ The uniqueness of $I_d+\varepsilon\left(T_{\mu'}^{\mu}-I_d\right)$ follows from Theorem \ref{thm:existence-optimal-coupling} since $\mu' \in \mathcal{P}_2^{\lambda}(\mathbb R^d)$ and the fact that $\left(I_d+\varepsilon\left(T_{\mu'}^{\mu}-I_d\right)\right)_{\#}\mu' \in \mathcal{P}_2(\mathbb R^d).$

By Theorem \ref{thm:subdiff-entropy}, since $\mu' \in \mathfrak C,$
\begin{equation*}
\partial^- \operatorname{KL}(\mu'|\rho) = \left\{\nabla \log \frac{\mathrm{d}\mu'}{\mathrm{d}\rho}\right\}.
\end{equation*}
Hence, by \eqref{eq:subdifferential}, we have
\begin{multline}
\label{eq:subdiff-new-transport}
\begin{aligned}
\operatorname{KL}\left(\left(I_d+\varepsilon\left(T_{\mu'}^{\mu}-I_d\right)\right)_{\#}\mu'\Big| \rho\right) &\geq \operatorname{KL}\left(\mu'|\rho\right) + \varepsilon\int_{\mathbb R^d} \nabla \log \frac{\mathrm{d}\mu'}{\mathrm{d}\rho}(x)\cdot\left(  T_{\mu'}^{\mu}(x)-x\right) \mu'(\mathrm{d}x)\\ &+ o\left(\mathcal{W}_2\left(\mu', \left(I_d+\varepsilon\left(T_{\mu'}^{\mu}-I_d\right)\right)_{\#}\mu'\right)\right).
\end{aligned}
\end{multline}
Now, by Corollary \ref{corollary:wass-transport}, we have
\begin{equation}
\label{eq:small-oh}
\mathcal{W}_2\left(\mu', \left(I_d+\varepsilon\left(T_{\mu'}^{\mu}-I_d\right)\right)_{\#}\mu'\right) = \varepsilon\mathcal{W}_2(\mu', \mu).
\end{equation}
By Assumption \ref{ass:abs-cty-pi}, $\rho$ is log-concave, thus by \cite[Theorem 9.4.10]{ambrosio2008gradient} the relative entropy $\operatorname{KL}(\cdot|\rho)$ is geodesically convex, and hence
\begin{equation}
\label{eq:geod-new-transport}
\operatorname{KL}\left(\left(I_d+\varepsilon\left(T_{\mu'}^{\mu}-I_d\right)\right)_{\#}\mu'\Big| \rho\right) \leq (1-\varepsilon)\operatorname{KL}(\mu'|\rho) + \varepsilon\operatorname{KL}(\mu|\rho).
\end{equation}
Combining \eqref{eq:subdiff-new-transport}, \eqref{eq:small-oh} and \eqref{eq:geod-new-transport} and using \eqref{eq:pushforward} gives
\begin{multline*}
\begin{aligned}
\operatorname{KL}(\mu|\rho) - \operatorname{KL}(\mu'|\rho) &\geq \int_{\mathbb R^d}\nabla \log \frac{\mathrm{d}\mu'}{\mathrm{d}\rho}(x)\cdot  \left(T_{\mu'}^{\mu}(x)-x\right) \mu'(\mathrm{d}x) + \frac{o(\varepsilon)}{\varepsilon}\\
&= \int_{\mathbb R^d}\nabla \log \frac{\mathrm{d}\mu'}{\mathrm{d}\rho}(x)\cdot\left(  T_{\mu'}^{\mu}(x)-x\right) \left({T_{\mu}^{\mu'}}_{\#}\mu\right)(\mathrm{d}x) + \frac{o(\varepsilon)}{\varepsilon}\\
&= \int_{\mathbb R^d}\nabla \log \frac{\mathrm{d}\mu'}{\mathrm{d}\rho}\left(T_{\mu}^{\mu'}(x)\right)\cdot  \left(x-T_{\mu}^{\mu'}(x)\right) \mu(\mathrm{d}x) + \frac{o(\varepsilon)}{\varepsilon}.
\end{aligned}
\end{multline*}
Sending $\varepsilon \to 0$ and rearranging gives the conclusion.
\end{proof}

\appendix
\section{Auxiliary results}
\begin{remark}[Conditions under which $\rho$ satisfies the LSI]\label{remark:LSI_for_rho}
We recall below a few criteria under which $\rho(\mathrm{d}x) \propto e^{-U(x)}\mathrm{d}x \in \mathcal{P}_2(\mathbb R^d)$ satisfies LSI:
    \begin{itemize}
        \item Assume $U \in C^2(\mathbb R^d)$ with $\nabla^2 U \succeq -C_U$ for some $C_U \geq 0$ and there exists $\delta > 0$ such that $\int_{\mathbb R^d} e^{\left(\frac{C_U}{2}+\delta\right)|x|^2}\rho(\mathrm{d}x) < \infty.$ Hence, by \cite[Theorem 1.1]{wang2001}, $\rho$ satisfies LSI.
        \item Assume $U \in C^2(\mathbb R^d)$ with $\nabla^2 U \succeq -C_U$ for some $C_U \geq 0$ and there exist $c_U', R' > 0$ such that $x \cdot \nabla U(x) \geq c_U'|x|^2$ whenever $|x| > R'.$ Hence, by \cite[Corollary 2.1]{CattiauxGuillinWu2009}, $\rho$ satisfies LSI.
    \end{itemize}
    Note that if $U \in C^2(\mathbb R^d),$ then Assumption \ref{ass:abs-cty-pi} implies $\nabla^2 U \succeq 0,$ so the first part in both criteria is satisfied. Assume that $U$ grows at least quadratically at infinity, i.e., there exist $k_U, R > 0$ such that $U(x) \geq k_U |x|^2$ for all $|x| > R$. This condition ensures that $\rho \in \mathcal{P}_2(\mathbb R^d)$, the first criterion is satisfied with $0 < \delta < k_U$ and the second criterion is also met since the growth condition implies that there exist $c_U', R' > 0$ such that $x \cdot \nabla U(x) \geq c_U' |x|^2$ whenever $|x| > R'.$
\end{remark}
\begin{proposition}[Existence and uniqueness of the minimizer of \eqref{eq:mean-field-min-problem}]
\label{prop:minimizer}
Let $F$ be weakly lower semi-continuous, convex in the sense of \eqref{eq:convexity_classical} and let $\rho \in \mathcal{P}_2(\mathbb R^d).$ Then there exists a unique minimizer of $F^{\sigma}$ and it belongs to $\mathcal{P}_2^{\rho}(\mathbb R^d).$
\end{proposition}
\begin{proof}
    We follow the argument presented in \cite[Proposition 2.5]{10.1214/20-AIHP1140}. Note that there exists $ \bar{\mu}\in\mathcal{P}_2(\mathbb{R}^d) $ such that $F^\sigma(\bar{\mu}) <+\infty.$ Denote 
	\begin{equation*}
	    \mathcal{S}:=\left\{\nu\in\mathcal{P}_2(\mathbb{R}^d):\operatorname{KL}(\nu|\rho) \leq \sigma^{-1} F^\sigma(\bar{\mu}) - \sigma^{-1}\inf_{\nu'\in\mathcal{P}_2(\mathbb{R}^d)} F(\nu') \right\}.
	\end{equation*}
    Observe that $F^{\sigma}(\mu) = F(\mu) + \sigma \operatorname{KL}(\mu|\rho) \geq \inf_{\mu'\in\mathcal{P}_2(\mathbb{R}^d)} F(\mu') > -\infty,$ for all $\mu \in \mathcal{P}_2(\mathbb R^d).$ Let $(\mu_k)_{k \geq 0}$ be a minimizing sequence of $F^{\sigma}$ on $\mathcal{S},$ i.e., $$\lim_{k \to \infty} F^{\sigma}(\mu_k) = \inf_{\mu'\in\mathcal{S}} F^{\sigma}(\mu').$$
    Since $\sigma^{-1} F^\sigma(\bar{\mu}) - \sigma^{-1}\inf_{\nu'\in\mathcal{P}_2(\mathbb{R}^d)} F(\nu') < \infty,$ for all $\sigma > 0,$ the set $\mathcal{S}$ is weakly compact (see, e.g., \cite[Lemma 1.4.3 (c)]{pDUP97a}), being a sub-level set of the relative entropy $\operatorname{KL}(\cdot|\rho).$ Hence, there exists $\mu_{\sigma}^* \in \mathcal{S}$ such that, at least along a subsequence of $(\mu_k)_{k \geq 0},$ we have $\mu_k \to \mu_{\sigma}^*$ weakly. 
    
    Since both $F$ and $\operatorname{KL}(\cdot|\rho)$ are weakly lower semi-continuous, it follows that
    \begin{equation*}
        F^{\sigma}(\mu_{\sigma}^*) \leq \liminf_{k \to \infty} F^{\sigma}(\mu_k) = \inf_{\mu'\in\mathcal{S}} F^{\sigma}(\mu'). 
    \end{equation*}
    Thus, $\mu_{\sigma}^*$ is a minimizer of $F^{\sigma}$ on $\mathcal{S}.$ By convexity of $F$ and strict convexity of $\operatorname{KL}(\cdot|\rho)$ on $\mathcal{S}$ (see, e.g., \cite[Lemma 1.4.3 (b)]{pDUP97a}), the function $F^{\sigma}$ is strictly convex on $\mathcal{S}$, and hence $\mu_{\sigma}^*$ is unique.

    Furthermore, for all $\mu_0 \notin \mathcal{S}$, we have 
    \begin{equation*}
        \sigma\operatorname{KL}(\mu_0|\rho) +\inf_{\nu'\in\mathcal{P}_2(\mathbb{R}^d)} F(\nu') > F^{\sigma}(\bar{\mu}), 
    \end{equation*}
    and therefore,
    \begin{equation*}
        F^{\sigma}(\mu_0) \geq \sigma\operatorname{KL}(\mu_0|\rho) +\inf_{\nu'\in\mathcal{P}_2(\mathbb{R}^d)} F(\nu') > F^{\sigma}(\bar{\mu}) \geq \inf_{\nu'\in\mathcal{P}_2(\mathbb{R}^d)} F^{\sigma}(\nu'),
    \end{equation*}
    for all $\mu_0 \notin \mathcal{S}.$ 
    
    Suppose there exists $\bar{\mu}_{\sigma} \in \mathcal{P}_2(\mathbb R^d)$ such that $F^{\sigma}(\bar{\mu}_{\sigma}) = \inf_{\nu'\in\mathcal{P}_2(\mathbb{R}^d)} F^{\sigma}(\nu').$ Then necessarily $\bar{\mu}_{\sigma} \in \mathcal{S},$ because $F^{\sigma}(\mu_0) > \inf_{\nu'\in\mathcal{P}_2(\mathbb{R}^d)} F^{\sigma}(\nu'),$ for all $\mu_0 \notin \mathcal{S},$ and since $\mu_{\sigma}^*$ is the unique minimizer of $F^\sigma$ on $\mathcal{S},$ we have $\bar{\mu}_{\sigma} = \mu_{\sigma}^*.$ Hence, $\mu_{\sigma}^*$ is the unique minimizer of $F^{\sigma}$ on $\mathcal{P}_2(\mathbb{R}^d).$
    
    Finally, since $\mu_{\sigma}^*$ minimizes $F^\sigma,$ we have $\operatorname{KL}(\mu_{\sigma}^*|\rho)<+\infty,$ which implies $\mu_{\sigma}^* \in \mathcal{P}_2^{\rho}(\mathbb R^d).$ 
\end{proof}
The following proposition shows that any global minimizer of $F^{\sigma}$ satisfies a first-order condition. Conversely, if $F$ is convex (cf. Assumption \ref{ass:convexity}) and there exists a measure in $\mathcal{P}_2(\mathbb R^d)$ satisfying this condition, then that measure is the unique minimizer of $F^\sigma.$
\begin{proposition}[First-order condition]
\label{prop:first-order-cond}
Assume $F \in \mathcal{C}^1$ and $\rho \in \mathcal{P}_2(\mathbb{R}^d).$ If $\mu_{\sigma}^* \in \argmin_{\mu \in \mathcal{P}_2(\mathbb{R}^d)}F^{\sigma}(\mu),$ then $\mu_{\sigma}^* \sim \rho$ and 
	\begin{equation}
    \label{InvariantSet}
		\frac{\delta F}{\delta \mu}(\mu_{\sigma}^*,\cdot) + \sigma \log\frac{\mathrm{d}\mu_{\sigma}^*}{\mathrm{d}\rho} \text{ is constant, }\rho\text{-a.e.}
	\end{equation}
Conversely, suppose Assumption \ref{ass:convexity} holds and let $\mu_{\sigma}^* \in \mathcal{P}_2(\mathbb R^d)$ satisfy \eqref{InvariantSet}. Then $\mu_{\sigma}^* = \argmin_{\mu \in \mathcal{P}_2(\mathbb{R}^d)}F^{\sigma}(\mu).$
\end{proposition}
\begin{proof}
We follow the argument presented in \cite[Proposition 2.5]{10.1214/20-AIHP1140}.

	\textit{Step 1. Sufficient condition:} Let $\mu_\sigma^* \in \mathcal{P}_2(\mathbb R^d)$ satisfy \eqref{InvariantSet}. In particular, $\mu_\sigma^*$ is equivalent to $\rho.$ Let $\mu \in \mathcal{P}_2^{\rho}(\mathbb R^d)$ (otherwise $F^\sigma(\mu)=+\infty $). Define the Radon-Nikodym derivative
	\[ 
	f:=\frac{\mathrm{d}\mu}{\mathrm{d}\mu_\sigma^*}. 
	\]
	For $\varepsilon \in [0,1],$ set $\mu^\varepsilon := (1-\varepsilon)\mu_\sigma^* + \varepsilon \mu = (1+\varepsilon (f-1))\mu_\sigma^*.$ Let $h(z) = z \log z$ for all $z > 0$ and $h(0)=0.$ Note that $h(z) \geq z-1,$ for all $z > 0$. 
	By Assumption \ref{ass:convexity}, using \eqref{eq:convexity_via_flat_derivative} we have
	\[
	\frac{F(\mu^\varepsilon) - F(\mu_\sigma^*)}{\varepsilon} \geq \frac1\varepsilon \int_{\mathbb R^d} \frac{\delta F}{\delta \mu}(\mu_\sigma^*,x) (\mu^\varepsilon - \mu_\sigma^*)\,\mathrm{d}x
	= \int_{\mathbb R^d} \frac{\delta F}{\delta \mu}(\mu_\sigma^*,x) (f(x)-1)\mu_\sigma^*(\mathrm{d}x).
	\]
	Moreover,
	\[
	\begin{split}
		&\frac{\sigma}{\varepsilon}\left(\operatorname{KL}(\mu^\varepsilon|\rho) - \operatorname{KL}(\mu_\sigma^*|\rho)\right)\\ 
		& = \frac{\sigma}{\varepsilon}\int_{\mathbb R^d} \left(\frac{\mathrm{d}\mu^\varepsilon}{\mathrm{d}\rho}(x) \log \frac{\mathrm{d}\mu^\varepsilon}{\mathrm{d}\rho}(x) - \frac{\mathrm{d}\mu_\sigma^*}{\mathrm{d}\rho}(x)\log \frac{\mathrm{d}\mu_\sigma^*}{\mathrm{d}\rho}(x) \right)\rho(\mathrm{d}x)\\
		& = \frac{\sigma}{\varepsilon}\int_{\mathbb R^d} \left(\frac{\mathrm{d}\mu^\varepsilon}{\mathrm{d}\rho}(x) - \frac{\mathrm{d}\mu_\sigma^*}{\mathrm{d}\rho}(x)\right)\log\frac{\mathrm{d}\mu_\sigma^*}{\mathrm{d}\rho}(x)\rho(\mathrm{d}x)\\ 
        &+ \frac{\sigma}{\varepsilon}\int_{\mathbb R^d} \frac{\mathrm{d}\mu^\varepsilon}{\mathrm{d}\rho}(x)\left(\log \frac{\mathrm{d}\mu^\varepsilon}{\mathrm{d}\rho}(x) - \log\frac{\mathrm{d}\mu_\sigma^*}{\mathrm{d}\rho}(x) \right)\rho(\mathrm{d}x)\\
		& = \sigma\int_{\mathbb R^d} (f(x)-1) \log\frac{\mathrm{d}\mu_\sigma^*}{\mathrm{d}\rho}(x)\mu_\sigma^*(\mathrm{d}x) + \frac{\sigma}{\varepsilon}\int_{\mathbb R^d} \frac{\mathrm{d}\mu^\varepsilon}{\mathrm{d}\mu_\sigma^*}(x) \log \frac{\mathrm{d}\mu^\varepsilon}{\mathrm{d}\mu_\sigma^*}\mu_\sigma^*(\mathrm{d}x)\\
		& = \sigma\int_{\mathbb R^d} (f(x)-1) \log\frac{\mathrm{d}\mu_\sigma^*}{\mathrm{d}\rho}(x)\mu_\sigma^*(\mathrm{d}x) + \frac{\sigma}{\varepsilon}\int_{\mathbb R^d} h(1+\varepsilon(f(x)-1))\mu_\sigma^*(\mathrm{d}x)\\
		& \geq \sigma\int_{\mathbb R^d} (f(x)-1) \log\frac{\mathrm{d}\mu_\sigma^*}{\mathrm{d}\rho}(x)\mu_\sigma^*(\mathrm{d}x) + \sigma\int_{\mathbb R^d} (f(x)-1)\mu_\sigma^*(\mathrm{d}x)\\
		& = \sigma\int_{\mathbb R^d} (f(x)-1) \log\frac{\mathrm{d}\mu_\sigma^*}{\mathrm{d}\rho}(x)\mu_\sigma^*(\mathrm{d}x),
	\end{split}
	\]
	since $\int_{\mathbb R^d} (f(x)-1)\mu_\sigma^*(\mathrm{d}x) = \int_{\mathbb R^d} (\mu-\mu_\sigma^*)(\mathrm{d}x) = 0.$
	Hence, 
	\[
	\frac{F^\sigma(\mu^\varepsilon)-F^\sigma(\mu_\sigma^*)}{\varepsilon} 
	\geq \int_{\mathbb R^d} \left(\frac{\delta F}{\delta \mu}(\mu_\sigma^*,x) + \sigma \log\frac{\mathrm{d}\mu_\sigma^*}{\mathrm{d}\rho}(x)\right) (f(x)-1)\mu_\sigma^*(\mathrm{d}x) = 0.
	\]
    Finally, by the strict convexity of $F^{\sigma},$ we obtain
    \begin{equation*}
        0 \leq \frac{F^\sigma(\mu^\varepsilon)-F^\sigma(\mu_\sigma^*)}{\varepsilon} < F^\sigma(\mu)-F^\sigma(\mu_\sigma^*),
    \end{equation*}
    for all $\mu \in \mathcal{P}_2(\mathbb R^d),$ which shows that $\mu_\sigma^*$ is the unique global minimizer of $F^\sigma.$

	\textit{Step 2. Necessary condition:} Let $\mu_{\sigma}^* \in \argmin_{\mu \in \mathcal{P}_2(\mathbb{R}^d)}F^{\sigma}(\mu).$ Then $\operatorname{KL}(\mu_{\sigma}^*|\rho)<+\infty,$ which implies that $\mu_{\sigma}^* \in \mathcal{P}_2^{\rho}(\mathbb R^d).$ For any $\mu \in \mathcal{P}_2^{\rho}(\mathbb R^d),$ we have
    \begin{align*}
	&0 \leq \frac{F^\sigma(\mu^\varepsilon)-F^\sigma(\mu_{\sigma}^*)}{\varepsilon}\\
    &= \frac{F(\mu^\varepsilon)-F(\mu_{\sigma}^*)}{\varepsilon} + \int_{\mathbb R^d} \frac{\sigma}{\varepsilon}\left(h\left(\frac{\mathrm{d}\mu^\varepsilon}{\mathrm{d}\rho}(x)\right) - h\left(\frac{\mathrm{d}\mu_{\sigma}^*}{\mathrm{d}\rho}(x)\right)\right)\rho(\mathrm{d}x)\\
    &=\int_{0}^{1}\int_{\mathbb{R}^{d}}\frac{\delta F}{\delta \mu}(\mu^{s\varepsilon},x)(\mu-\mu_\sigma^*)(\mathrm{d}x)\mathrm{d}s + \int_{\mathbb R^d} \frac{\sigma}{\varepsilon}\left(h\left(\frac{\mathrm{d}\mu^\varepsilon}{\mathrm{d}\rho}(x)\right) - h\left(\frac{\mathrm{d}\mu_{\sigma}^*}{\mathrm{d}\rho}(x)\right)\right)\rho(\mathrm{d}x).
    \end{align*}
    Since $\sup_{\beta \in [0,1]} \left|\frac{\delta F}{\delta \mu}(\mu^{\beta},x)\right| \leq C_F(1+|x|^2)$ for some $C_F > 0,$ and by continuity of $\frac{\delta F}{\delta \mu}(\cdot,x),$ the dominated convergence theorem yields
    \begin{equation*}
        \lim_{\varepsilon \searrow 0} \int_{0}^{1}\int_{\mathbb{R}^{d}}\frac{\delta F}{\delta \mu}(\mu^{s\varepsilon},x)(\mu-\mu_\sigma^*)(\mathrm{d}x)\mathrm{d}s = \int_{\mathbb{R}^{d}}\frac{\delta F}{\delta \mu}(\mu_\sigma^*,x)(\mu-\mu_\sigma^*)(\mathrm{d}x).
    \end{equation*}
    Since $h$ is convex, we have
    \begin{equation}
    \label{eq:rev-fatou-dominate}
        \frac{h\left(\frac{\mathrm{d}\mu^\varepsilon}{\mathrm{d}\rho}\right) - h\left(\frac{\mathrm{d}\mu_{\sigma}^*}{\mathrm{d}\rho}\right)}{\varepsilon} \leq h\left(\frac{\mathrm{d}\mu}{\mathrm{d}\rho}\right) - h\left(\frac{\mathrm{d}\mu_{\sigma}^*}{\mathrm{d}\rho}\right) \leq \left|h\left(\frac{\mathrm{d}\mu}{\mathrm{d}\rho}\right)\right| + \left|h\left(\frac{\mathrm{d}\mu_{\sigma}^*}{\mathrm{d}\rho}\right)\right|.
    \end{equation}
    Let $x_+ = \max(x,0)$ and $x_- = \min(x,0),$ so that $x = x_+ + x_-$ and $|x| = x_+ - x_-.$ There exists $c > 0$ such that $h(z)_- \in [-c,0]$ for all $z \geq 0.$ Hence,
    \begin{align*}
        +\infty > \operatorname{KL}(\mu|\rho) &= \int_{\mathbb R^d} h\left(\frac{\mathrm{d}\mu}{\mathrm{d}\rho}(x)\right)\rho(\mathrm{d}x)\\
        &= \int_{\mathbb R^d} h\left(\frac{\mathrm{d}\mu}{\mathrm{d}\rho}(x)\right)_+\rho(\mathrm{d}x) + \int_{\mathbb R^d} h\left(\frac{\mathrm{d}\mu}{\mathrm{d}\rho}(x)\right)_-\rho(\mathrm{d}x)
    \end{align*}
    and therefore
    \begin{equation*}
        \int_{\mathbb R^d} h\left(\frac{\mathrm{d}\mu}{\mathrm{d}\rho}(x)\right)_+\rho(\mathrm{d}x) < +\infty.
    \end{equation*}
    It follows that
    \begin{align*}
        \int_{\mathbb R^d} \left|h\left(\frac{\mathrm{d}\mu}{\mathrm{d}\rho}(x)\right)\right|\rho(\mathrm{d}x)= \int_{\mathbb R^d} h\left(\frac{\mathrm{d}\mu}{\mathrm{d}\rho}(x)\right)_+\rho(\mathrm{d}x) - \int_{\mathbb R^d} h\left(\frac{\mathrm{d}\mu}{\mathrm{d}\rho}(x)\right)_-\rho(\mathrm{d}x) < +\infty.
    \end{align*}
    Similarly,
    \begin{equation*}
        \int_{\mathbb R^d} \left|h\left(\frac{\mathrm{d}\mu_{\sigma}^*}{\mathrm{d}\rho}(x)\right)\right|\rho(\mathrm{d}x) < +\infty.
    \end{equation*}
    Thus, the function $x \mapsto \left|h\left(\frac{\mathrm{d}\mu}{\mathrm{d}\rho}(x)\right)\right| + \left|h\left(\frac{\mathrm{d}\mu_{\sigma}^*}{\mathrm{d}\rho}(x)\right)\right|$ is non-negative and $\rho$-integrable. By combining this with \eqref{eq:rev-fatou-dominate} and applying the reversed Fatou lemma, we obtain
    \begin{multline}
    \begin{aligned}
    \label{derivative h}
        &\limsup_{\varepsilon \searrow 0} \int_{\mathbb R^d} \frac{h\left(\frac{\mathrm{d}\mu^\varepsilon}{\mathrm{d}\rho}(x)\right) - h\left(\frac{\mathrm{d}\mu_{\sigma}^*}{\mathrm{d}\rho}(x)\right)}{\varepsilon}\rho(\mathrm{d}x)\\ 
        &\leq \int_{\mathbb R^d} \limsup_{\varepsilon \searrow 0} \frac{h\left(\frac{\mathrm{d}\mu^\varepsilon}{\mathrm{d}\rho}(x)\right) - h\left(\frac{\mathrm{d}\mu_{\sigma}^*}{\mathrm{d}\rho}(x)\right)}{\varepsilon}\rho(\mathrm{d}x).
    \end{aligned}
    \end{multline}
    Since $h$ is not differentiable at $z = 0,$ we will first show that $\frac{\mathrm{d}\mu_{\sigma}^*}{\mathrm{d}\rho} > 0$ $\rho$-a.e., before passing to the limit in the expression above.

    Let $g \coloneqq \frac{\mathrm{d}\mu_{\sigma}^*}{\mathrm{d}\rho}$ and define \begin{equation*}
        A \coloneqq \{x \in \mathbb R^d: g(x) > 0\}, \quad B \coloneqq \{x \in \mathbb R^d: g(x) = 0\}.
    \end{equation*}
    Consider the convex combination $\mu^\varepsilon = (1-\varepsilon)\mu_\sigma^* + \varepsilon \rho,$ for $\varepsilon \in (0,1),$ so that $g^\varepsilon = (1-\varepsilon)g + \varepsilon.$
    For $x \in A,$ we have $g(x)-g^\varepsilon(x) =\varepsilon(g(x)-1).$ Since $h$ is convex and differentiable on $A,$ it follows that
    \begin{multline}
    \label{A}
    \begin{aligned}
        h(g(x)) - h(g^\varepsilon(x))
        &\geq h'\left(g^\varepsilon(x)\right)\left(g(x)-g^\varepsilon(x)\right)\\
        &= \varepsilon\left(1+\log g^\varepsilon(x)\right)\left(g(x) - 1\right)\\
        &\geq \varepsilon\left(g(x) - 1\right).
    \end{aligned}
    \end{multline}
    The last inequality holds by considering separately the cases $g \geq 1$ and $g \in (0,1).$ If $g\geq1,$ then $$1+\log g^\varepsilon(x) = 1+\log \left((1-\varepsilon)g(x)+\varepsilon\right) \geq 1,$$ hence $$\left(1+\log g^\varepsilon(x)\right)\left(g(x) - 1\right) \geq g(x)-1.$$ If instead $g\in (0,1),$ then $$1+\log g^\varepsilon(x) = 1+\log \left((1-\varepsilon)g(x)+\varepsilon\right) \leq 1,$$ so again $$\left(1+\log g^\varepsilon(x)\right)\left(g(x) - 1\right) \geq g(x) - 1.$$
    For $x \in B,$ since $h(0) = 0$ and $g=0,$ we have $g^\varepsilon(x) = \varepsilon,$ and therefore
    \begin{equation}
    \label{B}
        h(g(x)) - h(g^\varepsilon(x)) = -\varepsilon \log \varepsilon.
    \end{equation}
    Now observe that
    \begin{align*}
        \operatorname{KL}(\mu_\sigma^*|\rho) - \operatorname{KL}(\mu^\varepsilon|\rho) &= \int_{\mathbb R^d}\left(h(g(x)) - h(g^\varepsilon(x))\right)\rho(\mathrm{d}x)\\
        &= \int_{A}\left(h(g(x)) - h(g^\varepsilon(x))\right)\rho(\mathrm{d}x) + \int_{B}\left(h(g(x)) - h(g^\varepsilon(x))\right)\rho(\mathrm{d}x).
    \end{align*}
    We apply \eqref{A} to the first term and \eqref{B} to the second, and after rearranging obtain
    \begin{align*}
        & \sigma\varepsilon\int_A \left(g(x) - 1\right)\rho(\mathrm{d}x) -\sigma \rho(B) \varepsilon \log \varepsilon \\
        &\leq\sigma \operatorname{KL}(\mu_\sigma^*|\rho) - \sigma \operatorname{KL}(\mu^\varepsilon|\rho)\\
        &\leq F(\mu^\varepsilon) - F(\mu^*_\sigma)\\
        &=\varepsilon\int_0^1 \int \frac{\delta F}{\delta \mu}(\mu^*_\sigma + \beta\varepsilon(\rho - \mu^*_\sigma), x)(\rho - \mu^*_\sigma)(\mathrm{d}x)\mathrm{d}\beta\\
        &\leq \varepsilon\int (1+|x|^2)(\rho + \mu^*_\sigma)(\mathrm{d}x) < +\infty,
    \end{align*}
     where the second inequality follows from $\mu^*_\sigma \in \argmin_{\mu} F^\sigma(\mu).$
     
    Dividing both sides by $\varepsilon$ and letting $\varepsilon \searrow 0$ leads to a contradiction unless $\rho(B) = 0.$ Thus $g > 0$ $\rho$-a.e., which implies that for any $E\in \mathcal{B}(\mathbb R^d)$ with $\rho(E)>0,$ we must have $\mu_\sigma^*(E) >0.$ This is equivalent to $\rho \ll \mu_\sigma^*,$ and since we already know that $\mu_\sigma^* \ll \rho,$ it follows that $\mu_\sigma^* \sim \rho.$

    We now proceed to establish the optimality condition for the minimizer. Using $h'(z) = 1 + \log z$ for $z > 0$ in \eqref{derivative h} gives
    \begin{align*}
        &\int_{\mathbb R^d }\limsup_{\varepsilon \searrow 0} \frac{h\left(\frac{\mathrm{d}\mu^\varepsilon}{\mathrm{d}\rho}(x)\right) - h\left(\frac{\mathrm{d}\mu_{\sigma}^*}{\mathrm{d}\rho}(x)\right)}{\varepsilon}\rho(\mathrm{d}x)\\ 
        &= \int_{\mathbb R^d} \left(1+\log \frac{\mathrm{d}\mu_{\sigma}^*}{\mathrm{d}\rho}(x)\right)\left(\frac{\mathrm{d}\mu}{\mathrm{d}\rho}(x) - \frac{\mathrm{d}\mu_{\sigma}^*}{\mathrm{d}\rho}(x)\right)\rho(\mathrm{d}x)\\
        &=\int_{\mathbb R^d} \log \frac{\mathrm{d}\mu_{\sigma}^*}{\mathrm{d}\rho}(x)(\mu-\mu_\sigma^*)(\mathrm{d}x),
    \end{align*}
    since $\int_{\mathbb R^d} (\mu-\mu_\sigma^*)(\mathrm{d}x)=0.$
    
    Therefore, 
    \begin{equation}
    \label{eq:variational-ineq}
        0 \leq \limsup_{\varepsilon \searrow 0} \frac{F^\sigma(\mu^\varepsilon)-F^\sigma(\mu_{\sigma}^*)}{\varepsilon} \leq \int_{\mathbb{R}^{d}}\left(\frac{\delta F}{\delta \mu}(\mu_\sigma^*,x) + \sigma \log \frac{\mathrm{d}\mu_{\sigma}^*}{\mathrm{d}\rho}(x)\right)(\mu-\mu_\sigma^*)(\mathrm{d}x),
    \end{equation}
    for all $\mu \in \mathcal{P}_2^{\rho}(\mathbb R^d).$ Hence, by \cite[Lemma 33]{jabir2021meanfieldneuralodesrelaxed},
    \begin{equation*}
        \frac{\delta F}{\delta \mu}(\mu_{\sigma}^*,\cdot) + \sigma \log\frac{\mathrm{d}\mu_{\sigma}^*}{\mathrm{d}\rho} \text{ is constant, }\mu_{\sigma}^*\text{-a.e.}
    \end{equation*}
    Since $\mu_\sigma^* \sim \rho,$ it follows that 
    \begin{equation*}
        \frac{\delta F}{\delta \mu}(\mu_{\sigma}^*,\cdot) + \sigma \log\frac{\mathrm{d}\mu_{\sigma}^*}{\mathrm{d}\rho} \text{ is constant, }\rho\text{-a.e.}
    \end{equation*}
\end{proof}
We also highlight that, under Assumption \ref{assumption:unif-lsi}, $\mu_\sigma^*$ satisfies both the LSI \eqref{eq:sobolev-ineq} and the Talagrand inequality \eqref{eq:Talagrand-ineq} since $\mu_\sigma^* = \Phi_{\sigma}[\mu_\sigma^*].$
Using Assumption \ref{ass:convexity}, \cite{Nitanda2022ConvexAO,chizat2022meanfield} proved the following entropy ``sandwich'' lemma, which provides bounds for the distance between $F^{\sigma}$ and the minimum value $F^{\sigma}(\mu_{\sigma}^*)$. We include the statement and its proof, as the version we require is formulated slightly differently, using the relative entropy $\operatorname{KL}(\cdot|\rho)$ instead of the entropy $H.$
\begin{lemma}[\protect{\cite[Proposition 1]{Nitanda2022ConvexAO}, \cite[Lemma 3.4]{chizat2022meanfield}}]
\label{lemma:sandwich}
Assume $F \in \mathcal{C}^1$ and convex in the sense of \eqref{eq:convexity_classical} (i.e., Assumption \ref{ass:convexity} holds), and let $\rho \in \mathcal{P}_2(\mathbb R^d).$ Let $\mu_{\sigma}^* \in \mathcal{P}_2^{\rho}(\mathbb R^d)$ be the unique minimizer of $F^{\sigma}.$ Then, for any $\mu \in \mathcal{P}_2(\mathbb R^d),$ we have
\begin{equation*}
\sigma \operatorname{KL}(\mu|\mu_{\sigma}^*) \leq  F^{\sigma}(\mu) - F^{\sigma}(\mu_{\sigma}^*) \leq \sigma \operatorname{KL}(\mu|\Phi_{\sigma}[\mu]).
\end{equation*}
\end{lemma}
\begin{proof}
    Since $F \in \mathcal{C}^1$ and is convex in the sense of \eqref{eq:convexity_classical}, it follows from \cite[Lemma 4.1]{10.1214/20-AIHP1140} that $F$ satisfies \eqref{eq:convexity_via_flat_derivative}. By Remark \ref{remark:weak_lower_semi_continuity}, $F$ is weakly lower semi-continuous. Hence, by Proposition \ref{prop:minimizer}, the measure $\mu_\sigma^* \in \mathcal{P}_2^{\rho}(\mathbb R^d)$ is the unique minimizer of $F^\sigma.$ Furthermore, by Proposition \ref{prop:first-order-cond}, $\mu_\sigma^*$ is equivalent to $\rho$ and satisfies \eqref{InvariantSet}, that is, 
    \begin{equation*}
    \mu_{\sigma}^*(\mathrm{d}x) = \frac{1}{Z( \mu_{\sigma}^*)}e^{-\frac{1}{\sigma}  \frac{\delta F}{\delta \mu}(\mu_{\sigma}^*, x)}\rho(\mathrm{d}x),
    \end{equation*}
    where $Z(\mu_{\sigma}^*)$ is the normalization constant. Let $\mu \in \mathcal{P}_2(\mathbb R^d).$ Then, by \eqref{eq:convexity_via_flat_derivative},
    \begin{align*}
        F^{\sigma}(\mu) - F^\sigma(\mu_{\sigma}^*) &\geq \int_{\mathbb R^d} \frac{\delta F}{\delta \mu}(\mu_{\sigma}^*, x)(\mu-\mu_{\sigma}^*)(\mathrm{d}x) +\sigma \operatorname{KL}(\mu|\rho) - \sigma \operatorname{KL}(\mu_{\sigma}^*|\rho)\\
        &=\sigma \operatorname{KL}(\mu|\mu_{\sigma}^*) - \sigma \operatorname{KL}(\mu_{\sigma}^*|\mu_{\sigma}^*) = \sigma \operatorname{KL}(\mu|\mu_{\sigma}^*).
    \end{align*}
    Similarly, by \eqref{eq:convexity_via_flat_derivative},
    \begin{align*}
    F^{\sigma}(\mu) - F^\sigma(\mu_{\sigma}^*) &\leq \int_{\mathbb R^d} \frac{\delta F}{\delta \mu}(\mu, x)(\mu - \mu_{\sigma}^*)(\mathrm{d}x) +\sigma \operatorname{KL}(\mu|\rho) - \sigma \operatorname{KL}(\mu_{\sigma}^*|\rho)\\
    &=\sigma \operatorname{KL}(\mu|\Phi_{\sigma}[\mu]) - \sigma \operatorname{KL}(\mu_{\sigma}^*|\Phi_{\sigma}[\mu]) \leq \sigma \operatorname{KL}(\mu|\Phi_{\sigma}[\mu]).
    \end{align*}
\end{proof}

\section{Optimal Transport}
\label{app:OT}
In this appendix, we recall the fundamental results from optimal transport that are used throughout the paper.
\begin{definition}[Pushforward of a measure by a map]
Let $T:\mathbb R^d \to \mathbb R^d$ be a $\mathcal{B}(\mathbb R^d)$-measurable map. Then, for every $\mu \in \mathcal{P}_2(\mathbb R^d),$ we denote by $T_{\#}\mu \in \mathcal{P}_2(\mathbb R^d)$ the pushforward measure of $\mu$ by $T,$ characterized by
\begin{equation}
\label{eq:pushforward}
\int_{\mathbb R^d} f(T(x))\mu(\mathrm{d}x) = \int_{\mathbb R^d} f(y)\left(T_{\#}\mu\right)(\mathrm{d}y), \text{ for any measurable bounded function }f.
\end{equation}
\end{definition}
Consider the $2$-Wasserstein distance $\mathcal{W}_2:\mathcal{P}_2(\mathbb R^d) \times \mathcal{P}_2(\mathbb R^d) \to [0, \infty),$ defined by
\begin{equation}
\label{eq:wass}
\mathcal{W}_2(\mu,\nu) \coloneqq \left(\inf_{\gamma \in \Gamma(\mu,\nu)} \int_{\mathbb R^d \times \mathbb R^d} |x-y|^2\gamma(\mathrm{d}x,\mathrm{d}y)\right)^{\frac{1}{2}},
\end{equation}
where $\Gamma(\mu,\nu) = \left\{\gamma \in \mathcal{P}_2(\mathbb R^d \times \mathbb R^d): (P_x)_{\#} \gamma = \mu, (P_y)_{\#} \gamma = \nu\right\}$ is the set of couplings between $\mu$ and $\nu,$ where $P_x : (x,y) \mapsto x$ and $P_y : (x,y) \mapsto y$ are the projections onto the first and second component, respectively. The set of optimal couplings for which the infimum is attained in \eqref{eq:wass} is denoted by $\Gamma_o(\mu, \nu) \coloneqq \left\{\Bar{\gamma} \in \Gamma(\mu, \nu): \mathcal{W}_2^2(\mu,\nu) = \int_{\mathbb R^d \times \mathbb R^d} |x-y|^2\Bar{\gamma}(\mathrm{d}x,\mathrm{d}y)\right\}.$

Now we recall a standard result from optimal transport (see e.g. \cite[Theorem 4.5]{gangbo-mccann} and also \cite{ambrosio2008gradient,santambrogio}), which shall be essential throughout the paper.
\begin{theorem}
\label{thm:existence-optimal-coupling}
Let $\mu \in \mathcal{P}_2^{\lambda}(\mathbb R^d)$ and $\nu \in \mathcal{P}_2(\mathbb R^d).$ Then
\begin{enumerate}
\item there exists a unique optimal coupling $\gamma^* \coloneqq \left(I_d, T_{\mu}^{\nu}\right)_{\#} \mu$ which minimizes \eqref{eq:wass}, where $T_{\mu}^{\nu}:\mathbb R^d \to \mathbb R^d$ is the unique $\mu$-almost everywhere (a.e.) optimal transport map from $\mu$ to $\nu.$
\item Moreover, $T_{\mu}^{\nu}(x) = \nabla \varphi(x)$ $\mu\text{-a.e.}$ for a convex function $\varphi: \mathbb R^d \to \mathbb R$ with $\varphi(x) \coloneqq \frac{1}{2}|x|^2-\psi(x),$ where $\psi$ is a c-concave function.\footnote{A function $\xi:\mathbb R^d \to \mathbb R$ is c-concave if and only if $x \mapsto \frac{1}{2}|x|^2 - \xi(x)$ is convex and lower semi-continuous.}
\item If $\nu \in \mathcal{P}_2^{\lambda}(\mathbb R^d),$ then $\gamma^* = \left(T_{\nu}^{\mu}, I_d\right)_{\#} \nu,$ where $T_{\nu}^{\mu}:\mathbb R^d \to \mathbb R^d$ is the unique $\nu$-a.e. optimal transport map such that 
\begin{equation*}
T_{\nu}^{\mu} \circ T_{\mu}^{\nu} = I_d, \quad \mu\text{-a.e. and } T_{\mu}^{\nu} \circ T_{\nu}^{\mu} = I_d, \quad \nu\text{-a.e.}
\end{equation*}
\end{enumerate}  
\end{theorem}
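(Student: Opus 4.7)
The plan is to establish the result by the classical Brenier strategy, proceeding in three stages (existence, structure, and inversion).

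First, I would prove existence of an optimal coupling via the direct method of the calculus of variations. The set $\Gamma(\mu,\nu)$ is non-empty (it contains $\mu\otimes\nu$), tight on $\mathbb R^d\times\mathbb R^d$ since its marginals $\mu,\nu$ are tight, and closed under weak convergence. As the cost $(x,y)\mapsto|x-y|^2$ is non-negative and lower semi-continuous, and the total cost $\int|x-y|^2\,\mathrm{d}(\mu\otimes\nu)$ is finite thanks to $\mu,\nu\in\mathcal{P}_2(\mathbb R^d)$, Prokhorov's theorem combined with weak lower semi-continuity of the cost functional produces a minimizer $\gamma^*\in\Gamma_o(\mu,\nu)$.

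Second, I would show that the support of any $\gamma^*\in\Gamma_o(\mu,\nu)$ lies in the graph of the subdifferential of a convex function. A standard argument shows that $\mathrm{supp}(\gamma^*)$ is $c$-cyclically monotone for the quadratic cost; expanding $|x-y|^2$ and discarding terms that cancel under any permutation reduces this to classical cyclical monotonicity in the bilinear pairing $\langle x,y\rangle$. By Rockafellar's theorem there exists a lower semi-continuous convex $\varphi:\mathbb R^d\to\mathbb R\cup\{+\infty\}$ (finite $\mu$-a.e.\ using the second-moment bound) with $\mathrm{supp}(\gamma^*)\subset\mathrm{graph}(\partial\varphi)$; equivalently, $\psi(x)\coloneqq\tfrac{1}{2}|x|^2-\varphi(x)$ is $c$-concave. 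This gives the convex/$c$-concave representation claimed in (ii) once the multi-valued $\partial\varphi$ is replaced by a map.

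This replacement is the main technical point and the step I expect to require the most care. Since $\varphi$ is convex, it is locally Lipschitz on the interior of its domain, hence by Rademacher's theorem differentiable outside a Borel set of Lebesgue measure zero. The hypothesis $\mu\in\mathcal{P}_2^{\lambda}(\mathbb R^d)$ then forces $\nabla\varphi$ to be well-defined and single-valued $\mu$-a.e., so combining with the previous step yields $\gamma^*=(I_d,\nabla\varphi)_{\#}\mu$, and we set $T_\mu^\nu\coloneqq\nabla\varphi$. Uniqueness of both $\gamma^*$ and $T_\mu^\nu$ follows because any convex combination of two optimal plans is again optimal, hence concentrated on the graph of a single $\mu$-a.e.\ map; two such plans must therefore coincide. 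This yields (i) and (ii).

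Finally, for (iii) I would apply the same argument with the roles of $\mu$ and $\nu$ swapped: when $\nu\in\mathcal{P}_2^{\lambda}(\mathbb R^d)$, the dual potential $\psi^*$ with $\varphi^*(y)\coloneqq\tfrac12|y|^2-\psi^*(y)$ equal to the Legendre--Fenchel conjugate of $\varphi$ produces a $\nu$-a.e.\ unique optimal map $T_\nu^\mu=\nabla\varphi^*$. The standard identity $\nabla\varphi^*\circ\nabla\varphi=I_d$ on the set of common differentiability points of $\varphi$ and $\varphi^*\circ\nabla\varphi$, together with the fact that the respective sets of differentiability have full $\mu$- and $\nu$-measure, delivers the two a.e.\ inverse relations $T_\nu^\mu\circ T_\mu^\nu=I_d$ $\mu$-a.e.\ and $T_\mu^\nu\circ T_\nu^\mu=I_d$ $\nu$-a.e.
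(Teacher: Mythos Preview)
Your sketch is a correct outline of the classical Brenier argument. However, the paper does not supply its own proof of this theorem: it is stated in the appendix as a recalled background result, with a reference to \cite[Theorem 4.5]{gangbo-mccann} and to \cite{ambrosio2008gradient,santambrogio}, and no proof is given. There is therefore nothing in the paper to compare your approach against; your outline is simply the standard proof that those references contain.
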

As a consequence of Theorem \ref{thm:existence-optimal-coupling}, we have the following
\begin{corollary}
\label{corollary:wass-transport}
Let $\mu, \nu \in \mathcal{P}_2^{\lambda}(\mathbb R^d).$ Then
\begin{equation*}
\mathcal{W}_2^2(\mu,\nu) = \int_{\mathbb R^d} \left|x-T_{\mu}^{\nu}(x)\right|^2\mu(\mathrm{d}x) = \int_{\mathbb R^d} \left|x-T_{\nu}^{\mu}(x)\right|^2\nu(\mathrm{d}x).
\end{equation*}
\end{corollary}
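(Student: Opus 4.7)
The plan is to derive both equalities directly from Theorem \ref{thm:existence-optimal-coupling}, using the definition of the pushforward to rewrite the Wasserstein infimum as an integral on $\mathbb R^d$ rather than on $\mathbb R^d \times \mathbb R^d$.

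First, since $\mu \in \mathcal{P}_2^{\lambda}(\mathbb R^d)$ and $\nu \in \mathcal{P}_2(\mathbb R^d)$, part (i) of Theorem \ref{thm:existence-optimal-coupling} applies, so there exists a unique $\mu$-a.e.\ optimal transport map $T_{\mu}^{\nu}:\mathbb R^d \to \mathbb R^d$ and the unique optimal coupling is $\gamma^* = (I_d, T_{\mu}^{\nu})_{\#}\mu \in \Gamma_o(\mu,\nu)$. By the definition of $\Gamma_o(\mu,\nu)$ and applying the pushforward identity \eqref{eq:pushforward} with the bounded-measurable test function replaced, via a standard truncation/monotone convergence argument, by $(x,y)\mapsto |x-y|^2$, I obtain
\begin{equation*}
\mathcal{W}_2^2(\mu,\nu) = \int_{\mathbb R^d \times \mathbb R^d} |x-y|^2 \gamma^*(\mathrm{d}x,\mathrm{d}y) = \int_{\mathbb R^d} |x - T_{\mu}^{\nu}(x)|^2 \mu(\mathrm{d}x),
\end{equation*}
which gives the first equality. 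The integrability that justifies using $|x-y|^2$ as a test function follows from $\mu,\nu \in \mathcal{P}_2(\mathbb R^d)$ and the bound $|x-y|^2 \le 2|x|^2+2|y|^2$.

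For the second equality, I invoke the additional absolute continuity hypothesis $\nu \in \mathcal{P}_2^{\lambda}(\mathbb R^d)$ to apply part (iii) of Theorem \ref{thm:existence-optimal-coupling}, which gives the alternative representation $\gamma^* = (T_{\nu}^{\mu}, I_d)_{\#}\nu$ with $T_{\nu}^{\mu}$ the unique $\nu$-a.e.\ optimal map from $\nu$ to $\mu$. Applying the pushforward identity exactly as above, but now with $\nu$ in place of $\mu$ and the map $x \mapsto (T_{\nu}^{\mu}(x),x)$,
\begin{equation*}
\mathcal{W}_2^2(\mu,\nu) = \int_{\mathbb R^d \times \mathbb R^d} |x-y|^2 \gamma^*(\mathrm{d}x,\mathrm{d}y) = \int_{\mathbb R^d} |T_{\nu}^{\mu}(y) - y|^2 \nu(\mathrm{d}y),
\end{equation*}
which is the second equality after renaming the integration variable.

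There is no real obstacle here: the corollary is essentially a bookkeeping consequence of Theorem \ref{thm:existence-optimal-coupling}, with the only mild technical point being that the pushforward identity \eqref{eq:pushforward} is stated for bounded measurable $f$, whereas here $f(x,y)=|x-y|^2$ is unbounded. This is handled either by truncating $f_N = f \wedge N$, applying \eqref{eq:pushforward} to each $f_N$, and passing to the limit via monotone convergence using $\mu,\nu \in \mathcal{P}_2(\mathbb R^d)$, or by appealing to the standard extension of \eqref{eq:pushforward} to any measurable $f\ge 0$ (and then to $f\in L^1$) which is a direct consequence of the definition of the pushforward measure.
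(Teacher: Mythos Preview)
Your proof is correct and matches the paper's approach: the paper does not give an explicit proof but simply states the corollary ``as a consequence of Theorem~\ref{thm:existence-optimal-coupling},'' and your argument is precisely the unpacking of that consequence via the two representations $\gamma^* = (I_d, T_{\mu}^{\nu})_{\#}\mu = (T_{\nu}^{\mu}, I_d)_{\#}\nu$ combined with the pushforward identity.
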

\begin{theorem}[\protect{\cite[Theorem 1.48]{santambrogio2015optimal}}]
\label{thm:sufficient-cond-optimal-convexity}
Suppose $\mu \in \mathcal{P}_2(\mathbb R^d)$ and that $u: \mathbb R^d \to \mathbb R$ is a convex and differentiable $\mu$-a.e. Set $T \coloneqq \nabla u$ and suppose $\int_{\mathbb R^d} |T(x)|^2 \mu(\mathrm{d}x) < \infty.$ Then $T$ is an optimal transport map from $\mu$ to $T_{\#}\mu.$
\end{theorem}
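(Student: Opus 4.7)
The plan is to use the Fenchel--Young inequality together with the Kantorovich formulation of $\mathcal{W}_2^2$, exploiting the fact that the graph of $\nabla u$ realizes equality in Young's inequality.

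First, I would set $\nu \coloneqq T_{\#}\mu$ and verify that $\nu \in \mathcal{P}_2(\mathbb R^d)$: this follows from the assumption $\int_{\mathbb R^d}|T(x)|^2\mu(\mathrm{d}x) < \infty$ together with the change-of-variables identity \eqref{eq:pushforward} applied to $f(y)=|y|^2$. Next, let $u^*:\mathbb R^d\to\mathbb R\cup\{+\infty\}$ denote the Legendre transform $u^*(y)\coloneqq\sup_{x\in\mathbb R^d}\{x\cdot y - u(x)\}$, which is convex and lower semi-continuous. Since $u$ is convex and differentiable $\mu$-a.e., it is subdifferentiable at $\mu$-a.e.\ point, with $\partial u(x)=\{\nabla u(x)\}=\{T(x)\}$ there. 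The Fenchel--Young inequality gives
\begin{equation*}
u(x) + u^*(y) \geq x\cdot y\quad\text{for all }x,y\in\mathbb R^d,
\end{equation*}
with equality if and only if $y\in\partial u(x)$. Consequently, $u(x)+u^*(T(x)) = x\cdot T(x)$ for $\mu$-a.e.\ $x$.

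The key step is then the following Kantorovich-duality argument. Since $u$ is convex, it is bounded below by an affine function, and the same holds for $u^*$; together with $\mu,\nu\in\mathcal{P}_2(\mathbb R^d)$, this justifies writing the integrals $\int u\,\mathrm{d}\mu$ and $\int u^*\,\mathrm{d}\nu$ in $(-\infty,+\infty]$. For any coupling $\gamma\in\Gamma(\mu,\nu)$, integrating Young's inequality against $\gamma$ yields
\begin{equation*}
\int_{\mathbb R^d\times\mathbb R^d} x\cdot y\,\gamma(\mathrm{d}x,\mathrm{d}y) \;\leq\; \int_{\mathbb R^d} u(x)\,\mu(\mathrm{d}x) + \int_{\mathbb R^d} u^*(y)\,\nu(\mathrm{d}y).
\end{equation*}
For the particular coupling $\gamma_T\coloneqq(I_d,T)_{\#}\mu$, the pointwise equality case together with \eqref{eq:pushforward} gives
\begin{equation*}
\int_{\mathbb R^d\times\mathbb R^d} x\cdot y\,\gamma_T(\mathrm{d}x,\mathrm{d}y)= \int_{\mathbb R^d} x\cdot T(x)\,\mu(\mathrm{d}x) = \int_{\mathbb R^d} u\,\mathrm{d}\mu + \int_{\mathbb R^d} u^*\,\mathrm{d}\nu,
\end{equation*}
so $\gamma_T$ maximizes $\int x\cdot y\,\mathrm{d}\gamma$ over $\Gamma(\mu,\nu)$. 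Because the second-moment identity
\begin{equation*}
\int_{\mathbb R^d\times\mathbb R^d}|x-y|^2\gamma(\mathrm{d}x,\mathrm{d}y) = \int_{\mathbb R^d}|x|^2\mu(\mathrm{d}x) + \int_{\mathbb R^d}|y|^2\nu(\mathrm{d}y) - 2\int_{\mathbb R^d\times\mathbb R^d} x\cdot y\,\gamma(\mathrm{d}x,\mathrm{d}y)
\end{equation*}
holds with marginal-dependent first two terms, minimizing the left-hand side over $\Gamma(\mu,\nu)$ is equivalent to maximizing $\int x\cdot y\,\mathrm{d}\gamma$. Hence $\gamma_T\in\Gamma_o(\mu,\nu)$, so $T$ is an optimal transport map from $\mu$ to $T_{\#}\mu$.

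The only delicate point is the integrability needed to make the duality rigorous: one must ensure $\int u\,\mathrm{d}\mu$ and $\int u^*\,\mathrm{d}\nu$ are well-defined as extended reals and that $\int u^*(T(x))\,\mu(\mathrm{d}x)=\int u^*\,\mathrm{d}\nu$ via \eqref{eq:pushforward}. The affine lower bounds on $u$ and $u^*$ combined with $\mu,\nu\in\mathcal{P}_2(\mathbb R^d)$ and the identity $u^*(T(x))=x\cdot T(x)-u(x)\leq |x||T(x)|-u(x)$, which is integrable against $\mu$ (using Cauchy--Schwarz and the affine lower bound on $u$), handle this subtlety.
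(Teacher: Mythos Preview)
Your proof is correct. The paper itself does not prove this statement---it is simply quoted from \cite[Theorem 1.48]{santambrogio2015optimal} in the appendix without argument---so there is no ``paper's own proof'' to compare against. Your Fenchel--Young/Kantorovich-duality argument is the standard one (and is essentially the proof given in the cited reference): equality in Young's inequality on the graph of $\nabla u$ shows $(I_d,T)_\#\mu$ maximizes $\int x\cdot y\,\mathrm{d}\gamma$ over $\Gamma(\mu,\nu)$, which by the second-moment identity is equivalent to minimizing $\int|x-y|^2\,\mathrm{d}\gamma$. Your handling of the integrability subtlety (affine lower bounds plus the pointwise identity $u^*(T(x))=x\cdot T(x)-u(x)$ forcing finiteness) is also fine.
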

\begin{theorem}[Subdifferential of $\operatorname{KL}(\cdot|\rho);$ \protect{\cite[Theorem 10.4.9]{ambrosio2008gradient}}]
\label{thm:subdiff-entropy}
Suppose $\rho \in \mathcal{P}_2(\mathbb{R}^d)$ satisfies Assumption \ref{ass:abs-cty-pi}. The relative entropy $\operatorname{KL}(\cdot|\rho)$ has finite slope at $\mu \in \mathcal{P}^{\rho}_2(\mathbb R^d)$ (cf. Definition \ref{def:slope}), i.e., $|\mathfrak d \operatorname{KL}(\cdot|\rho)|(\mu) < \infty$ if and only if $\frac{\mathrm{d}\mu}{\mathrm{d}\rho} \in W^{1,1}_{\lambda, \text{loc}}(\mathbb R^d)$ and $\nabla \log \frac{\mathrm{d}\mu}{\mathrm{d}\rho} \in L^2_{\mu}(\mathbb R^d).$ In this case, $I(\mu|\rho) = |\mathfrak d \operatorname{KL}(\cdot|\rho)|^2(\mu)$ and $\partial^- \operatorname{KL}(\mu|\rho) = \left\{\nabla \log \frac{\mathrm{d}\mu}{\mathrm{d}\rho}\right\}$ (cf. \eqref{eq:subdifferential} in Definition \ref{def:wass-sub-and-sup}).
\end{theorem}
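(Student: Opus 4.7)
The plan is to prove the theorem as an equivalence by establishing each implication separately, and then extracting the slope/Fisher identity and subdifferential identification as byproducts. The overall strategy rests on relating the purely metric object $|\mathfrak d \operatorname{KL}(\cdot|\pi)|(\mu)$ to the Eulerian object $\|\nabla\log \rho\|_{L^2_\mu}$ via the continuity equation and a Riesz-representation argument in the Wasserstein tangent space. Throughout I will write $\rho = \mathrm{d}\mu/\mathrm{d}\pi$ and use the decomposition $\operatorname{KL}(\mu|\pi)=H(\mu)+\int U\,\mathrm{d}\mu + \log Z$, valid under Assumption \ref{ass:abs-cty-pi}, to reduce computations on $\operatorname{KL}$ to standard ones on $H$ plus a smooth linear term.

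The easy direction is the \emph{sufficiency}: assuming $\rho \in W^{1,1}_{\lambda,\text{loc}}$ and $\nabla \log\rho \in L^2_\mu$, I would test the slope against smooth perturbations. For $\xi \in C_c^\infty(\mathbb R^d;\mathbb R^d)$, set $\mu_t \coloneqq (I_d + t\xi)_\# \mu$. Using the change-of-variables formula for $H$ and Taylor expansion of the potential part,
\begin{equation*}
\operatorname{KL}(\mu_t|\pi) - \operatorname{KL}(\mu|\pi) = t\int_{\mathbb R^d}\bigl(\nabla U \cdot \xi - \operatorname{div}\xi\bigr)\,\mathrm{d}\mu + o(t).
\end{equation*}
Integrating by parts against $\rho\, e^{-U}\,\mathrm{d}x$, the bracket collapses to $\xi \cdot \nabla\log\rho$, so the directional derivative equals $\int \xi\cdot \nabla\log\rho\,\mathrm{d}\mu$. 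Combined with $\mathcal{W}_2(\mu,\mu_t)\leq t\|\xi\|_{L^2_\mu}$ and Cauchy--Schwarz, this yields $|\mathfrak d \operatorname{KL}(\cdot|\pi)|(\mu) \leq \|\nabla\log\rho\|_{L^2_\mu}=I(\mu|\pi)^{1/2}$, and in particular finiteness of the slope. Taking $\xi_n \to -\nabla\log\rho$ in $L^2_\mu$ (via a truncation/mollification argument) along the same chain of inequalities shows the matching lower bound, establishing $|\mathfrak d \operatorname{KL}(\cdot|\pi)|^2(\mu) = I(\mu|\pi)$.

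The harder direction is \emph{necessity}: given finite slope, recover both the Sobolev regularity of $\rho$ and the formula $\partial^- \operatorname{KL}(\mu|\pi) = \{\nabla\log\rho\}$. My plan is to define the linear functional
\begin{equation*}
L:C_c^\infty(\mathbb R^d) \to \mathbb R,\qquad L(\varphi)\coloneqq \limsup_{t\downarrow 0}\frac{\operatorname{KL}(\mu|\pi)-\operatorname{KL}((I_d + t\nabla\varphi)_\# \mu|\pi)}{t},
\end{equation*}
and show via the displacement-interpolation inequality $\mathcal{W}_2(\mu,(I_d+t\nabla\varphi)_\#\mu)\leq t\|\nabla\varphi\|_{L^2_\mu}$ combined with the slope bound that $|L(\varphi)|\leq |\mathfrak d\operatorname{KL}(\cdot|\pi)|(\mu)\,\|\nabla\varphi\|_{L^2_\mu}$. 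Thus $L$ is a bounded linear functional on the subspace $\{\nabla\varphi : \varphi \in C_c^\infty\}$ of $L^2_\mu$, so by Riesz representation there exists $v$ in the closure (the Wasserstein tangent space $T_\mu \mathcal{P}_2$) such that $L(\varphi)=\int \nabla\varphi\cdot v\,\mathrm{d}\mu$. Rewriting $L(\varphi)$ via the same continuity-equation computation as in the sufficiency direction gives $L(\varphi)=-\int(\nabla U\cdot\nabla\varphi-\Delta\varphi)\,\mathrm{d}\mu$, so that in the sense of distributions, $v\mu = \nabla\rho\cdot e^{-U}\,\mathrm{d}x$, which (after identifying densities) forces $\rho \in W^{1,1}_{\lambda,\text{loc}}$ with $v = \nabla\log\rho \in L^2_\mu$.

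The identification $\partial^-\operatorname{KL}(\mu|\pi) = \{\nabla\log\rho\}$ will then follow: the sufficiency step already produced the first-order inequality required for $\nabla\log\rho$ to be a subgradient (Definition \ref{def:wass-sub-and-sup}), while uniqueness comes from the fact that any other subgradient must coincide on gradients of smooth functions, and the tangent space $T_\mu\mathcal{P}_2$ is exactly the $L^2_\mu$-closure of such gradients. The main obstacle is the necessity direction, specifically the step of upgrading boundedness of $L$ on smooth potentials to genuine Sobolev regularity of $\rho$: this requires carefully extracting a distributional identity from the metric slope bound and invoking the structural theorem that the Wasserstein tangent space coincides with the closure of $\{\nabla\varphi:\varphi\in C_c^\infty\}$ in $L^2_\mu$. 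All remaining steps—the continuity-equation computation, the Cauchy--Schwarz upper bound, and the identification of the subdifferential—are routine once this non-trivial analytic input is in place.
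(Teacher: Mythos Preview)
The paper does not prove this theorem: it is stated in the appendix with an explicit attribution to \cite[Theorem 10.4.9]{ambrosio2008gradient} and no proof is given, so there is no ``paper's own proof'' to compare your proposal against. Your sketch is a reasonable high-level outline of the argument in Ambrosio--Gigli--Savar\'e, which is the reference the paper defers to.

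That said, since you have written a proof outline, a few remarks on it. The sufficiency direction is essentially right. In the necessity direction, your Riesz-representation strategy is the correct idea, but two points need more care than you indicate. First, the functional $L$ as you define it via a $\limsup$ is not a priori linear; one must argue (using displacement convexity of $\operatorname{KL}(\cdot|\pi)$ along geodesics when $\pi$ is log-concave, or via the general AGS machinery relating slope to minimal strong subdifferential) that the directional derivative actually exists as a limit and depends linearly on $\nabla\varphi$. Second, passing from the distributional identity $\int \nabla\varphi\cdot v\,\mathrm{d}\mu = -\int(\nabla U\cdot\nabla\varphi - \Delta\varphi)\,\mathrm{d}\mu$ to $\rho\in W^{1,1}_{\lambda,\text{loc}}$ with $\nabla\rho = (v+\nabla U)\rho\, e^{-U}$ requires verifying that the right-hand side is locally integrable, which uses $v\in L^2_\mu$, $\nabla U$ locally bounded, and $\rho\,e^{-U}\in L^1_{\text{loc}}$. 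Finally, for the subdifferential identification, showing $\nabla\log\rho\in\partial^-\operatorname{KL}(\mu|\pi)$ in the sense of Definition~\ref{def:wass-sub-and-sup} requires the inequality to hold for \emph{all} $\mu'$ and all optimal couplings, not just smooth pushforward perturbations; this is where displacement convexity of $\operatorname{KL}(\cdot|\pi)$ (which holds under Assumption~\ref{ass:abs-cty-pi}) is genuinely used.
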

We first observe that the requirement
\begin{equation*}
    \mu \in \mathfrak C = \left\{m \in \mathcal{P}^{\rho}_2(\mathbb R^d): \frac{\mathrm{d}m}{\mathrm{d}\rho} \in W_{\lambda,\text{loc}}^{1,1}(\mathbb R^d), \sqrt \frac{\mathrm{d}m}{\mathrm{d}\rho} \in W_{\rho}^{1,2}(\mathbb R^d)\right\}
\end{equation*} 
is equivalent to the hypotheses of Theorem \ref{thm:subdiff-entropy}, namely $\frac{\mathrm{d}\mu}{\mathrm{d}\rho} \in W^{1,1}_{\lambda, \text{loc}}(\mathbb R^d)$ and $\nabla \log \frac{\mathrm{d}\mu}{\mathrm{d}\rho} \in L^2_{\mu}(\mathbb R^d).$ Indeed, for any $\mu \in \mathcal{P}^{\rho}_2(\mathbb R^d)$, the condition $\sqrt \frac{\mathrm{d}\mu}{\mathrm{d}\rho} \in W_{\rho}^{1,2}(\mathbb R^d)$ is equivalent to $\sqrt \frac{\mathrm{d}\mu}{\mathrm{d}\rho} \in L_{\rho}^2(\mathbb R^d)$ and $\nabla\sqrt \frac{\mathrm{d}\mu}{\mathrm{d}\rho} \in L_{\rho}^2(\mathbb R^d)$. The first property holds automatically since $\mu \in \mathcal{P}^{\rho}_2(\mathbb R^d)$. For the second, applying the chain rule yields
\begin{align*}
    \int_{\mathbb R^d} \left|\nabla\sqrt \frac{\mathrm{d}\mu}{\mathrm{d}\rho}(x)\right|^2\rho(\mathrm{d}x) &= \frac{1}{4}\int_{\mathbb R^d} \left|\frac{\nabla \frac{\mathrm{d}\mu}{\mathrm{d}\rho}(x)}{\sqrt \frac{\mathrm{d}\mu}{\mathrm{d}\rho}(x)}\right|^2\rho(\mathrm{d}x)\\ 
    &= \frac{1}{4}\int_{\mathbb R^d} \left|\frac{\nabla \frac{\mathrm{d}\mu}{\mathrm{d}\rho}(x)}{ \frac{\mathrm{d}\mu}{\mathrm{d}\rho}(x)}\right|^2\frac{\mathrm{d}\mu}{\mathrm{d}\rho}(x)\rho(\mathrm{d}x)\\ &= \frac{1}{4}\int_{\mathbb R^d} \left|\nabla \log\frac{\mathrm{d}\mu}{\mathrm{d}\rho}(x)\right|^2 \mu(\mathrm{d}x),
\end{align*}
which is precisely the condition $\nabla \log \frac{\mathrm{d}\mu}{\mathrm{d}\rho} \in L^2_{\mu}(\mathbb R^d).$ Note that the motivation behind the definition of the space $\mathfrak{C}$ is to characterize the required integrability conditions for $\mu$ without referring to the space $L^2_{\mu}(\mathbb R^d)$ which depends on $\mu$ itself.

Now let us briefly discuss how \cite[Theorem 10.4.9]{ambrosio2008gradient} is applied in our setting. Note that, using the notation from \cite{ambrosio2008gradient}, their Theorem 10.4.9 is formulated for a functional $\mathcal{F}(\mu|\gamma) := \int_{\mathbb{R}^d} F(\sigma) \mathrm{d}\gamma$ for $\mu=\sigma \cdot \gamma$ (which means that $\sigma = \frac{\mathrm{d}\mu}{\mathrm{d}\gamma}$), where $\gamma$ is log-concave, associated with a sufficiently regular energy density $F: \mathbb{R}_+ \to \mathbb{R}_+$. The result states that the functional $\mathcal{F}(\cdot|\gamma)$ has finite slope at $\mu=\sigma \cdot \gamma$, i.e., $|\mathfrak d\mathcal{F}|(\mu) < \infty,$ if and only if $L_F(\sigma) \in W_{\lambda,\text{loc}}^{1,1}(\mathbb R^d)$ and $\nabla L_F (\sigma) = \sigma \mathbf{w}$ for some function $\mathbf{w} \in L_\mu^q(\mathbb R^d)$, where $L_F(z) = z F'(z) - F(z)$ is defined in \cite[page 214, (9.3.13)]{ambrosio2008gradient}. In this case, 
    \begin{equation*}
        |\mathfrak d \mathcal{F}|(\mu) = \left(\int_{\mathbb R^d} |\mathbf{w}(x)|^q \mu(\mathrm{d}x)\right)^{1/q},
    \end{equation*}
    and $\mathbf{w}$ is the minimal selection in $\partial^- \mathcal{F}(\mu),$ i.e., $\mathbf{w} = \partial^\circ \mathcal{F}(\mu).$

    In our setting, we take $\mathcal{F}(\cdot|\gamma) = \operatorname{KL}(\cdot|\rho),$ with $\gamma = \rho \propto e^{-U}$, with convex $U,$ so that $\sigma  = \frac{\mathrm{d}\mu}{\mathrm{d}\rho}$, and $F(z)=z\log z$, so that $L_F(z) = z$. In Theorem \ref{thm:optim}, we prove that $|\mathfrak d \operatorname{KL}(\cdot|\rho)|(\mu) < \infty,$ which implies $L_F(\sigma) = \frac{\mathrm{d}\mu}{\mathrm{d}\rho} \in W_{\lambda,\text{loc}}^{1,1}(\mathbb R^d)$ and $\mathbf{w} = \nabla \log \frac{\mathrm{d}\mu}{\mathrm{d}\rho} \in L_\mu^2(\mathbb R^d).$ Since we work on $\mathcal{P}_2(\mathbb R^d),$ we have $q=2$ (the general case $q = \frac{p}{p-1}$ is given in \cite[Definition 10.3.1]{ambrosio2008gradient}). By the alternative characterization of the slope, i.e., $$|\mathfrak d \operatorname{KL}(\cdot|\rho)|(\mu) = \min\{\|\xi\|_{L_\mu^2(\mathbb R^d)}: \xi \in \partial^-\operatorname{KL}(\mu|\rho)\},$$ we obtain $|\mathfrak d \operatorname{KL}(\cdot|\rho)|(\mu)^2 = I(\mu|\rho)$ with the minimizer in $\partial^-\operatorname{KL}(\mu|\rho)$ given uniquely by $\nabla \log \frac{\mathrm{d}\mu}{\mathrm{d}\rho}.$ Thus, $\partial^\circ \operatorname{KL}(\mu|\rho)=\partial^-\operatorname{KL}(\mu|\rho) = \left\{\nabla \log \frac{\mathrm{d}\mu}{\mathrm{d}\rho} \right\}.$
\section{Differential calculus on $\mathcal{P}_2(\mathbb R^d)$}
In this appendix, we recall the notions of linear functional (flat) differentiability \cite{cardaliaguet2019master}, Wasserstein differentiability \cite{Carmona2018ProbabilisticTO} and slope of functions of measures \cite{ambrosio2008gradient} used throughout the paper.
\begin{definition}[Flat differentiability on $\mathcal{P}_2(\mathbb R^d)$]
\label{def:flat-derivative}
We say a function $F:\mathcal{P}_2(\mathbb R^d)\to \mathbb R$ is in $\mathcal{C}^1$, if there exists a continuous function $\frac{\delta F}{\delta \mu}:\mathcal{P}_2(\mathbb R^d)\times \mathbb R^d \to \mathbb{R},$ with respect to the product topology on $\mathcal{P}_2(\mathbb R^d) \times \mathbb R^d,$ called the flat derivative of $F,$ for which there exists $\kappa > 0$ such that for all $(\mu,x) \in \mathcal{P}_2(\mathbb R^d) \times \mathbb R^d,$ $\left|\frac{\delta F}{\delta \mu}(\mu,x)\right| \leq \kappa\left(1+|x|^2\right),$ and for all $\mu' \in \mathcal{P}_2(\mathbb R^d),$
\begin{equation}
\label{eq:flat-der}
\lim_{\varepsilon \searrow 0 }\frac{F(\mu^\varepsilon)- F(\mu)}{\varepsilon} =
\int_{\mathbb R^d} \frac{\delta F}{\delta \mu} (\mu, x) (\mu'-\mu) (\mathrm{d}x), \quad \textnormal{with $\mu^\varepsilon =\mu + \varepsilon (\mu' - \mu)$\,,}
\end{equation}
and $\int_{\mathbb R^d} \frac{\delta F}{\delta \mu} (\mu, x) \mu(\mathrm{d}x)=0.$
\end{definition}
\begin{remark}
\label{rmk:FTC}
One can show that if $F:\mathcal{P}_2(\mathbb R^d) \to \mathbb R$ admits a flat derivative $\frac{\delta F }{\delta \mu},$ then for all $\mu,\mu'\in \mathcal{P}_2(\mathbb R^d),$ the function $[0,1]\ni \varepsilon \mapsto F(\mu^\varepsilon) $ is
continuous on $[0,1]$ and differentiable on $(0,1)$ with derivative 
$\frac{\mathrm{d}}{\mathrm{d} \varepsilon}F(\mu^\varepsilon) = \int_{\mathbb R^d} \frac{\delta F}{\delta \mu} (\mu^\varepsilon, x) (\mu'-\mu) (\mathrm{d}x)$ (see \cite[Theorem 2.3]{jourdain}).
Hence, by the fundamental theorem of calculus, $F(\mu')-F(\mu)=\int_0^1 \int_{\mathbb R^d} \frac{\delta F}{\delta \mu} (\mu^\varepsilon, x) (\mu'-\mu) (\mathrm{d}x)\mathrm{d}\varepsilon,$
provided that $\varepsilon \mapsto \int  \frac{\delta F}{\delta \mu} (\mu^\varepsilon, x) (\mu'-\mu)(\mathrm{d}x)$ is integrable.
\end{remark}
Recall that the tangent space of $\mathcal{P}_2(\mathbb R^d)$ at $\mu \in \mathcal{P}_2(\mathbb R^d)$ is defined as 
\begin{equation*}
\mathcal{T}_{\mu}\mathcal{P}_2(\mathbb R^d) = \overline{\left\{\nabla \psi: \psi \in C_c^{\infty}(\mathbb R^d) \right\} } \subset L_\mu^2(\mathbb R^d),
\end{equation*}
where the closure is taken in $L_\mu^2(\mathbb R^d),$ see \cite[Definition 8.4.1]{ambrosio2008gradient}, and $C_c^{\infty}(\mathbb R^d)$ denotes the space of smooth functions with compact support in $\mathbb R^d.$
\begin{definition}[Wasserstein sub- and super-differential on $\mathcal{P}_2(\mathbb R^d)$]
\label{def:wass-sub-and-sup}
Let $F:\mathcal{P}_2(\mathbb R^d) \to \mathbb R$ and let $\mu \in \mathcal{P}_2(\mathbb R^d).$ Then
\begin{enumerate}
\item a map $\xi \in \mathcal{T}_{\mu}\mathcal{P}_2(\mathbb R^d)$ belongs to the sub-differential $\partial^-F(\mu)$ of $F$ at $\mu \in \mathcal{P}_2(\mathbb R^d)$ if for all $\mu' \in \mathcal{P}_2(\mathbb R^d),$
\begin{equation}
\label{eq:subdifferential}
F(\mu') \geq F(\mu) + \sup_{\gamma \in \Gamma_o(\mu,\mu')} \int_{\mathbb R^d \times \mathbb R^d} \langle \xi(x), y-x\rangle\ \mathrm{d}\gamma(x,y) + o\left(\mathcal{W}_2(\mu,\mu')\right).
\end{equation}
If $\partial^-F(\mu) \neq \emptyset,$ we say the function $F$ is Wasserstein sub-differentiable at $\mu.$
\item A map $\xi \in \mathcal{T}_{\mu}\mathcal{P}_2(\mathbb R^d)$ belongs to the super-differential $\partial^+F(\mu)$ of $F$ at $\mu \in \mathcal{P}_2(\mathbb R^d)$ if $-\xi\in\partial^-(-F)(\mu).$ If $\partial^+F(\mu) \neq \emptyset,$ we say the function $F$ is Wasserstein super-differentiable at $\mu.$
\end{enumerate}
\end{definition}
Then, we say that a function is Wasserstein differentiable if it admits sub- and super- differentials which coincide.
\begin{definition}[Wasserstein differentiability on $\mathcal{P}_2(\mathbb R^d)$]
\label{def:wass-differentiability}
We say that a function $F:\mathcal{P}_2(\mathbb R^d)\to\mathbb R$ is Wasserstein differentiable at $\mu \in \mathcal{P}_2(\mathbb R^d)$ if $\partial^-F(\mu) \cap \partial^+F(\mu) \neq \emptyset.$
\end{definition}
If $F:\mathcal{P}_2(\mathbb R^d) \to \mathbb R$ is Wasserstein differentiable at $\mu \in \mathcal{P}_2(\mathbb R^d)$ (cf. Definition \ref{def:wass-differentiability}), then by \cite[Proposition 5.63]{Carmona2018ProbabilisticTO}, there exists a unique map $\nabla_\mu F(\mu) \in \mathcal{T}_{\mu}\mathcal{P}_2(\mathbb R^d)$ such that $\partial^-F(\mu) = \partial^+F(\mu)= \left\{\nabla_\mu F(\mu)\right\},$ called the Wasserstein gradient of $F$ at $\mu \in \mathcal{P}_2(\mathbb R^d),$ satisfying for any $\mu' \in \mathcal{P}_2(\mathbb R^d),$ and $\gamma \in \Gamma_o(\mu,\mu'),$
\begin{equation*}
F(\mu') = F(\mu) + \int_{\mathbb R^d \times \mathbb R^d} \langle \nabla_\mu F(\mu)(x), y-x\rangle\ \mathrm{d}\gamma(x,y) + o\left(\mathcal{W}_2(\mu,\mu')\right).
\end{equation*}
Finally, we recall the definition of the slope of a function $F$ defined on $ \mathcal{P}_2(\mathbb R^d)$ (see also \cite[Definition 1.2.4]{ambrosio2008gradient}).
\begin{definition}
\label{def:slope}
    Let $F:\mathcal{P}_2(\mathbb R^d) \to \mathbb R$ and let $\mu \in \mathcal{P}_2(\mathbb R^d).$ The slope of $F$ at $\mu$ is defined by
    \begin{equation*}
        |\mathfrak d F|(\mu) \coloneqq \limsup_{\nu \to \mu} \frac{\left(F(\mu)-F(\nu)\right)_{+}}{\mathcal{W}_2(\nu, \mu)},
    \end{equation*}
    where $a_+ \coloneqq \max(a,0).$
\end{definition}

\section*{Acknowledgements} 
R-AL was partly supported by the EPSRC Centre for Doctoral Training in Mathematical Modelling, Analysis and Computation (MAC-MIGS), funded by the UK Engineering and Physical Sciences Research Council (grant EP/S023291/1), Heriot-Watt University, and the University of Edinburgh. This work was initiated while R-AL was a PhD student at Heriot-Watt University and completed during a postdoctoral appointment at RIKEN AIP. \L S acknowledges the support of the UKRI Prosperity Partnership Scheme (FAIR) under EPSRC Grant EP/V056883/1 and the Alan Turing Institute.

\bibliographystyle{abbrv}
\bibliography{references}
\end{document}